\newtheorem{thm}{Theorem}[section]
\newtheorem{cor}[thm]{Corollary}
\newtheorem{lem}[thm]{Lemma}
\theoremstyle{definition}
\newtheorem{defi}[thm]{Definition}
\newtheorem{claim}{Claim}[thm]
\newtheorem{case}{Case}
\newcommand{\N}{\mathbb{N}}
\newcommand{\R}{\mathbb{R}}
\newcommand{\X}{\mathcal{X}}
\newcommand{\U}{\mathcal{U}}
\newcommand{\V}{\mathcal{V}}
\newcommand{\W}{\mathcal{W}}
\newcommand{\fin}{\mathrm{FIN}}
\newcommand{\varep}{\varepsilon}
\DeclareMathOperator{\range}{range}
\DeclareMathOperator{\supp}{supp}
\DeclareMathOperator{\stem}{stem}
\begin{document}

\title[Parametrized Ramsey theory of infinite block sequences]{Parametrized Ramsey theory of infinite block sequences of vectors}
\date{\today}

\author[J. K. Kawach]{Jamal K. Kawach}
\thanks{Research partially supported by an Ontario Graduate Scholarship.}
\address{Department of Mathematics\\ University of Toronto\\ Toronto, Canada, M5S 2E4.}
\email{jamal.kawach@mail.utoronto.ca}
\urladdr{https://www.math.toronto.edu/jkawach}

\subjclass[2010]{05D10, 46B45, 54D80.}
\keywords{Block sequences, infinite-dimensional Ramsey theory, Gowers' theorem, Hales-Jewett theorem, parametrized Ramsey theory}

\begin{abstract}
We show that the infinite-dimensional versions of Gowers' $\fin_k$ and $\fin_{\pm k}$ theorems can be parametrized by an infinite sequence of perfect subsets of $2^\omega$. To do so, we use ultra-Ramsey theory to obtain exact and approximate versions of a result which combines elements from both Gowers' theorems and the Hales-Jewett theorem. As a consequence, we obtain a parametrized version of Gowers' $c_0$ theorem.
\end{abstract}

\maketitle

\section{Introduction}
Recall that Hindman's theorem \cite{H} says that for any finite colouring of the set $\fin$ of finite subsets of $\omega$, there is an infinite block sequence $B$ such that the set of all finite unions of elements of $B$ is monochromatic, where a sequence $(p_n)_{n<\omega}$ of finite subsets of $\omega$ is a \emph{block sequence} if $$\max p_n < \min p_m \text{ whenever $n < m$}.$$ This was then generalized by Gowers \cite{G} in order to prove that every real-valued Lipschitz (or, more generally, uniformly continuous) function $f$ on the unit sphere of $c_0$ is \emph{oscillation stable}: For every $\varep > 0$ there is an infinite-dimensional subspace $X$ of $c_0$ such that the oscillation of $f$ is at most $\varep$ when restricted to the unit sphere of $X$. In fact, the proof of Gowers' $c_0$ theorem is essentially combinatorial and makes use of an approximate Ramsey theorem concerning $\fin_{\pm k}$, the set of all finitely-supported functions $p : \omega \rightarrow \{0, \pm 1, \dots, \pm k\}$ such that $p$ achieves at least one of the values $\pm k$. In this setting, elements of $\fin_{\pm k}$ can be naturally identified with vectors in $c_0$. An exact version of such a result exists for $\fin_k$, the set of all finitely-supported functions $p : \omega \rightarrow \{0, 1,\dots,k\}$ such that $k \in \range(p)$; this can be used to prove an oscillation stability result for Lipschitz functions on the positive part of the unit sphere of $c_0$.

Hindman's theorem was extended in another direction by Milliken \cite{M} who proved an \emph{infinite-dimensional} version of Hindman's theorem: For every analytic colouring of the set $\fin^{[\infty]}$ of all infinite block sequences of finite subsets of $\omega$, there is an infinite block sequence $B$ such that the set $$\{C \in \fin^{[\infty]} : \text{ every $X \in C$ is a union of sets from $B$} \}$$ is monochromatic. Given such an infinite-dimensional result, one natural way to strengthen it is to \emph{parametrize} it by some space of interest. One line of research in this direction is to parametrize such results by perfect subsets of the Cantor space $2^\omega$ with its standard metrizable topology, in the sense that we colour a product of the form $X \times 2^\omega$ and look for monochromatic subsets of the form $Y \times P$ where $Y$ is a ``nice'' subset of $X$ and $P$ is a perfect subset of $2^\omega$. The first result of this kind is due to Miller and Todorcevic \cite[p. 183]{Miller} and involves a parametrization of the Galvin-Prikry theorem \cite{GP}; Pawlikowski \cite{Paw} later showed that Ellentuck's theorem \cite{E} can be parametrized by perfects subsets of $2^\omega$. The result of Pawlikowski was then generalized by Mijares and Nieto \cite{Mi, MN} who eventually proved that the abstract Ramsey theorem of Todorcevic \cite{T} can be parametrized as above.

Instead of asking for a parametrization involving one perfect subset of $2^\omega$, one can look for a \emph{sequence} of perfect subsets of $2^\omega$. For instance, Milliken's theorem was parametrized by Todorcevic \cite[Theorem 5.45]{T} using sequences of perfect subsets of $2^\omega$ in the following way; the proof makes use of an infinite-dimensional version of the Hales-Jewett theorem \cite[Corollary 5.42]{T}.

\begin{thm}[Parametrized Milliken Theorem]
For every finite Souslin measurable colouring of $\fin \times (2^\omega)^\omega$ there are $B \in \fin^{[\infty]}$ and a sequence $(P_i)_{i<\omega}$ of non-empty perfect subsets of $2^\omega$ such that $[B]^{[\infty]} \times \prod_{i<\omega} P_i$ is monochromatic.
\end{thm}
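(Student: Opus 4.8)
The plan is to run a fusion argument that simultaneously thins the block sequence using Milliken's (unparametrized) theorem and thins the parameter coordinates using the infinite-dimensional Hales--Jewett theorem of Corollary 5.42, organised so that at each finite stage only finitely much data has been committed.

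First I would fix notation on the parameter side: identify a non-empty perfect subset of $2^\omega$ with a perfect, balanced subtree of $2^{<\omega}$, so that a sequence $(P_i)_{i<\omega}$ of perfect sets corresponds to a sequence $(T_i)_{i<\omega}$ of such trees, and declare $(T_i')_{i<\omega}$ to \emph{refine} $(T_i)_{i<\omega}$ when each $T_i'\subseteq T_i$ is perfect. A finite approximation of the combined datum $(B,(T_i)_{i<\omega})$ is then a pair consisting of an initial segment $(p_0,\dots,p_{n-1})$ of $B$ together with the restrictions of $T_0,\dots,T_{n-1}$ below their $n$-th splitting levels, and refinement of infinite data should be compatible with end-extension of finite approximations in the usual way. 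In this language the theorem is a parametrized Ramsey statement for the product of Milliken's space $\fin^{[\infty]}$ with the space of $\omega$-sequences of perfect trees, and the role of Corollary 5.42 is to supply the pigeonhole principle for the parameter coordinates, i.e.\ a Ramsey theorem for the combinatorial structure of $\omega$-sequences of finite binary trees.

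Second, I would reduce to the case of a colouring that is clopen in the exponential topology. This rests on the standard fact that, in a space behaving like a topological Ramsey space, every Souslin-measurable set is Ramsey, proved by unfolding the Souslin operation; here one needs the ambient product space (Milliken's space times the space of perfect-tree sequences) to satisfy an abstract Ellentuck theorem, which in turn follows from the parameter pigeonhole above together with the combinatorics of $\fin^{[\infty]}$. Third, I would settle the clopen case by fusion. Suppose inductively that at stage $n$ we have committed to $(p_0,\dots,p_{n-1})$ and to the $n$-th approximations of $T_0,\dots,T_{n-1}$, and that we hold a block sequence $B^{(n)}$ together with perfect trees $(T_i^{(n)})_{i<\omega}$ carrying the remaining freedom. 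There are only finitely many ways to adjoin one further block from the current approximation of $B^{(n)}$; for each such extension I apply Corollary 5.42 to the induced colouring of the parameter coordinates to refine $(T_i^{(n)})_{i<\omega}$ so that this colouring depends only on the stage $n+1$ parameter data, and then apply Milliken's theorem to thin $B^{(n)}$ so that the block coordinate is stabilised as well. Intersecting over the finitely many cases and diagonalising over $n$ produces $B$ and $(T_i)_{i<\omega}$; by construction, for every $C\in[B]^{[\infty]}$ and all $x_i\in[T_i]$ the colour $c(C,(x_i)_{i<\omega})$ is already decided by finite approximations, hence constant, so $P_i:=[T_i]$ works.

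The step I expect to be the main obstacle is the bookkeeping that keeps the two fusions synchronised: each application of the infinite Hales--Jewett theorem on the parameter side must be carried out relative to only the finitely much block-sequence data already fixed, and each Milliken thinning must leave the already-fixed finite tree approximations intact, so the diagonalisation order must be arranged with care to ensure that the limit objects are genuine — that $B$ is an honest infinite block sequence and the $T_i$ are honest perfect trees with non-empty perfect branch sets. It is precisely the need to thin the whole sequence $(T_i)_{i<\omega}$ simultaneously, rather than one perfect set at a time, that forces the use of the \emph{infinite-dimensional} Hales--Jewett theorem of Corollary 5.42 in place of the simpler one-perfect-set parametrization.
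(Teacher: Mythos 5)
Your sketch has two genuine gaps, both at the points you yourself flag as ``the main obstacle.'' First, the reduction from Souslin measurable to clopen colourings is not a standard fact you can invoke: it requires an Ellentuck-type theorem for the \emph{parametrized} space of pairs (block sequence, sequence of perfect trees), and establishing such a theorem is essentially the whole content of the result you are trying to prove. It does not ``follow from the parameter pigeonhole together with the combinatorics of $\fin^{[\infty]}$'': the amalgamation/pigeonhole needed for the product space is precisely what is delicate when infinitely many perfect sets must be shrunk simultaneously (this is the issue isolated in Zheng's thesis and in the Mijares--Nieto line of work), so invoking it as a black box assumes the hard part. Second, the fusion step does not deliver the conclusion you draw from it. A colouring that is clopen (or open) in the relevant topology is not uniformly decided by finite approximations: the colour of a pair $(C,(x_i))$ with $C\in[B]^{[\infty]}$ arbitrary need not be settled at any fixed finite stage of your construction, so ``the colour is already decided by finite approximations, hence constant'' does not follow from stage-by-stage stabilization; this is exactly why Galvin--Prikry/Ellentuck-style proofs use combinatorial forcing (accept/reject) rather than naive fusion. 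Moreover, Corollary 5.42 is a statement about sequences of variable words, not about colourings of $\prod_i[T_i^{(n)}]$; to apply it ``to the induced colouring of the parameter coordinates,'' relativized below previously chosen perfect trees and respecting finite commitments, you would first have to build the coding of perfect sets by letter substitutions into variable words --- and once that coding is in place, the per-stage application is both unjustified in the form you state it and unnecessary.

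For comparison, the intended proof (Todorcevic's, and the one this paper adapts to $\fin_k$ in Theorem \ref{parafin}) avoids fusion entirely: one fixes the alphabet $L=\bigcup_n L_n$ with $L_n$ the elements of $2^\omega$ supported below $n+1$, codes the whole product $\fin^{[\infty]}\times(2^\omega)^\omega$ at once by infinite rapidly increasing sequences of variable words via the maps $\varphi$ (variable positions give the block sequence) and $\psi$ (letters give the parameter sequence), pulls the colouring back, and applies the infinite-dimensional Hales--Jewett theorem (resp.\ Theorem \ref{infGHJ}) a single time. The block sequence $B$ is then read off from the odd-indexed words of the monochromatic sequence $Y$, the set $P$ of parameters from substitutions $y_{2m}[\sigma_{2m}]$ into the even-indexed words, and the two claims in the proof of Theorem \ref{parafin} (in particular Claim \ref{claim}, producing the perfect sets $P_i$ inside $P$) replace all of your bookkeeping. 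So the infinite-dimensional Hales--Jewett theorem is used globally, handling the block coordinate and all parameter coordinates simultaneously, rather than as a per-stage pigeonhole on the parameter side; if you want to salvage your outline, the honest route is to prove the parametrized Ellentuck theorem for the product space first, which is a substantially harder task than the coding argument.
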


More generally, one can ask which spaces admit a parametrization using sequences of perfect subsets of $2^\omega$. In her thesis, Zheng \cite{Zheng} isolated a necessary and sufficient condition for the existence of such a parametrization within the context of topological Ramsey space theory; we refer the reader there for more information and for applications of parametrized Ramsey theory.

The main goal of this paper is to show that the Parametrized Milliken Theorem holds when $\fin$ is replaced with $\fin_k$. On the other hand, while an exact Ramsey theorem is not possible in the setting of $\fin_{\pm k}$, we will obtain an ``approximate'' parametrized Ramsey theorem for $\fin_{\pm k}$. The proofs make use of \emph{ultra-Ramsey theory}; this approach is advantageous since it allows for more flexibility when dealing with ``approximate'' Ramsey-theoretic results.

The rest of this paper is organized as follows. In Section 2 we give a brief overview of Gowers' theorems as well as their infinite-dimensional counterparts. In Section 3 we use ultra-Ramsey theory to obtain an infinite-dimensional Ramsey theorem which can be seen as a common generalization of Gowers' $\fin_k$ theorem and a multi-variable version of the Hales-Jewett theorem. This Gowers-Hales-Jewett theorem is then used to parametrize the infinite-dimensional $\fin_k$ theorem. Section 4 contains the approximate versions of the results from Section 3; in particular we use ultrafilter methods to obtain an approximate Gowers-Hales-Jewett theorem relative to a metric defined on multi-variable words and then use this to parametrize the infinite-dimensional $\fin_{\pm k}$ theorem. We then conclude with an application to oscillation stability of functions on the unit sphere of $c_0$.

\section{Preliminaries}

Let $\omega$ denote the set of all non-negative integers and let $\N = \omega \setminus \{0\}$. We follow standard set-theoretic conventions. In particular, each ordinal $m < \omega$ will be identified with the set $\{0, \dots, m-1\}$ of its predecessors.

\subsection{Gowers' theorems}

Given $k \in \N$, let $\fin_{\pm k}$ denote the set of all functions $p: \omega \rightarrow \{0, \pm 1, \dots, \pm k\}$ such that $$\supp p := \{n < \omega : p(n) \neq 0\}$$ is finite and such that $p$ achieves at least one of the values $\pm k$. Given $p,q \in \fin_{\pm k}$, write $p < q$ whenever $\max \supp p < \min \supp q$. In this case $p+ q$ will denote the element of $\fin_{\pm k}$ given by the coordinate-wise sum of $p$ and $q$. This operation gives $\fin_{\pm k}$ the structure of a partial semigroup.

We also have an operation between various $\fin$ spaces: The \emph{tetris operation} $T : \fin_{\pm k} \rightarrow \fin_{\pm (k - 1)}$ is defined by
\[ T(p)(n) := \begin{cases}
	p(n) - 1 & \text{ if $p(n) > 0$},\\
	0 & \text{ if $p(n) = 0$},\\
	p(n) + 1 & \text{ if $p(n) < 0$}.
	\end{cases}
\]
It is easy to check that $T$ is a surjective homomorphism of partial semigroups. For $\alpha \leq \omega$, a sequence $(p_n)_{n < \alpha}$ is a \emph{block sequence in $\fin_{\pm k}$} if $p_n \in \fin_{\pm k}$ and $p_n < p_m$ for all $n < m < \alpha$. Let $\fin_{\pm k}^{[\infty]}$ denote the space of all infinite block sequences in $\fin_k$. Given a block sequence $P = (p_n)_{n < \alpha}$, the \emph{partial subsemigroup of $\fin_{\pm k}$ generated by $P$} is defined as
\begin{equation*}
\begin{split}
[P]_{\pm k} := & \{\varep_0 T^{j_0} (p_{n_0}) + \dots + \varep_m T^{j_m} (p_{n_m}) : m < \omega, n_0 < \dots < n_m < \alpha, \\
& \varep_0, \dots, \varep_m \in \{\pm 1\},  j_0, \dots, j_m < k \text{ and $\min j_i = 0$}\}.
\end{split}
\end{equation*}
If $Q = (q_n)_{n < \beta}$, $\beta \leq \alpha$ is another block sequence, write $Q \leq P$ and say $Q$ is a \emph{block subsequence of $P$} whenever $q_n \in [P]_{\pm k}$ for all $n < \beta$. We write $[P]_{\pm k}^{[\infty]}$ for the set of all infinite block subsequences of $P$.

For a subset $A \subseteq \fin_{\pm k}$ and $\varep > 0$, define $$(A)_\varep := \{p \in \fin_{\pm k} : (\exists q\in A) \, ||p - q||_\infty \leq \varep\}$$ where $||\cdot||_\infty$ denotes the $\ell_\infty$ norm. We can now state the following theorem of Gowers, originally proved in \cite{G} using the theory of idempotent ultrafilters in order to show that every real-valued uniformly continuous function on the unit sphere of $c_0$ is oscillation stable.

\begin{thm}[Gowers]\label{gowersthm}
For every $k, r \in \N$ and every $c: \fin_{\pm k} \rightarrow r$ there are $i < r$ and $P \in \fin_{\pm k}^{[\infty]}$ such that $$ [P]_{\pm k}\subseteq \left(c^{-1}\{i\}\right)_1.$$
\end{thm}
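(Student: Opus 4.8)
This is Gowers' $\fin_{\pm k}$ theorem, and the plan is to prove it as Gowers originally did: via idempotent ultrafilters on the partial semigroup $(\fin_{\pm k},+)$ organised around the tetris operation. First I would set up the space $\gamma\fin_{\pm k}$ of \emph{cofinal} ultrafilters on $\fin_{\pm k}$ — those $\U$ for which $\{q:p<q\}\in\U$ for every $p$ — with the topology generated by the sets $\{\U:A\in\U\}$, $A\subseteq\fin_{\pm k}$. Cofinality is exactly what makes the convolution $\U+\V:=\{A:\{p:\{q:p<q,\ p+q\in A\}\in\V\}\in\U\}$ well defined and cofinal, so that $(\gamma\fin_{\pm k},+)$ is a nonempty compact right-topological semigroup and therefore contains an idempotent by the Ellis--Numakura lemma. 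I would run the same construction for every $\fin_{\pm j}$ with $1\le j\le k$, observe that $T$ induces a continuous surjective homomorphism $\widetilde T:\gamma\fin_{\pm j}\to\gamma\fin_{\pm(j-1)}$, and record that separated-support elements of $\fin_{\pm i}$ and $\fin_{\pm j}$ add to an element of $\fin_{\pm\max(i,j)}$, inducing maps $\gamma\fin_{\pm i}\times\gamma\fin_{\pm j}\to\gamma\fin_{\pm\max(i,j)}$ that are continuous in the first coordinate.

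The structural core is an algebraic lemma producing a compatible \emph{tower} of idempotents: ultrafilters $\U_j\in\gamma\fin_{\pm j}$ for $1\le j\le k$, each idempotent, with $\widetilde T(\U_j)=\U_{j-1}$ and with the cross-level absorption identities $\U_i+\U_j=\U_j+\U_i=\U_j$ for all $i\le j$, so that a lower-level ultrafilter is swallowed on either side by a higher-level one. I would build this tower by induction on $j$: given $\U_1,\dots,\U_{j-1}$, intersect the fibre $\widetilde T^{-1}(\U_{j-1})$ with the closed subsemigroup on which the right-hand absorptions $\W+\U_i=\W$ ($i<j$) hold, check that this is nonempty, and extract an idempotent $\U_j$ from it by Ellis--Numakura. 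The genuinely delicate point is that left translations in a right-topological semigroup need not be continuous, so the left-hand absorptions $\U_i+\U_j=\U_j$ cannot be read off from compactness alone and must be arranged by passing to minimal left ideals.

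Given a colouring $c:\fin_{\pm k}\to r$, let $i<r$ be the unique colour with $A:=c^{-1}\{i\}\in\U_k$; the claim is that this $i$ works. I would construct $P=(p_n)_{n<\omega}\in\fin_{\pm k}^{[\infty]}$ recursively, maintaining at each stage the invariant that every element $\varep_0T^{j_0}(p_{n_0})+\dots+\varep_tT^{j_t}(p_{n_t})$ produced so far — with $n_0<\dots<n_t$, the $\varep$'s in $\{\pm1\}$, the $j$'s $<k$, and $\min_\ell j_\ell=0$ — lies in $(A)_1$. The choice of the next block $p_n$ uses idempotency of the $\U_j$ to splice an already-built combination onto a fresh block, the absorption identities to attach pieces that $T$ has pushed down to lower levels, and cofinality to keep $\min\supp p_n$ past $\max\supp p_{n-1}$; translating between the ultrafilters on the spaces $\fin_{\pm j}$ and the block-sequence language is the standard shrinking argument familiar from proofs of Hindman's theorem.

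The main obstacle — and the reason the conclusion is $(c^{-1}\{i\})_1$ rather than $c^{-1}\{i\}$ itself — is the sign flips $\varep_\ell\in\{\pm1\}$. The reflection $\sigma(p)=-p$ is an automorphism of the partial semigroup $\fin_{\pm k}$, yet no cofinal ultrafilter can be $\sigma$-invariant: $\sigma$ interchanges the two halves $\{p:p(\min\supp p)>0\}$ and $\{p:p(\min\supp p)<0\}$ of $\fin_{\pm k}$, exactly one of which lies in $\U_k$. Correspondingly, the colouring $p\mapsto\operatorname{sign}(p(\min\supp p))$ already rules out any exact conclusion, since $p$ and $-p$ always occur together in $[P]_{\pm k}$. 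So the recursion cannot keep mixed-sign combinations literally inside $A$, and the real work is to show that the discrepancy the signs introduce is always at most one unit in the $\ell_\infty$ norm — a single step between adjacent lattice values, which is precisely what the ball $(\cdot)_1$ accommodates — uniformly over all sub-block-sequences, sign patterns, and tetris levels at once. Arranging the bookkeeping so that this one-unit slack genuinely suffices, rather than compounding as the combinations grow, is the crux of the argument.
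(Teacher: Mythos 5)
Your proposal sets up the right machinery (cofinal ultrafilters on the partial semigroups $\fin_{\pm j}$, Ellis--Numakura, a tower of idempotents $\U_1,\dots,\U_k$ with $T$-compatibility and the two-sided absorption identities), and it correctly diagnoses why no exact statement is possible. Note that the paper itself does not prove Theorem~\ref{gowersthm}; it quotes it from Gowers and points to the literature, though Section~4 of the paper carries out the relevant argument in the word setting, which is the template against which your sketch should be measured.

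The genuine gap is that the one point you yourself call ``the crux'' --- controlling the sign-flipped combinations within error $1$ --- is asserted rather than argued, and the tower you build gives you no tools to argue it. A sign flip is not a one-unit perturbation: if $p$ attains the value $k$, then $\|p-(-p)\|_\infty=2k$, so nothing about membership of $p$ in $A$ places $-p$ anywhere near $(A)_1$, and the absorption identities $\U_i+\U_j=\U_j+\U_i=\U_j$ for the plain tetris map $T$ say nothing about reflections. The standard fix, which is exactly what the paper reproduces for words in Section~4, is structural: build the tower compatibly with the \emph{signed} tetris $-T$ (i.e.\ require $(-T)^{\,j-i}(\U_j)=\U_i$), deduce that the top idempotent is \emph{subsymmetric} ($A\in\U_k\Rightarrow -(A)_1\in\U_k$, via the identity $\U_k=(-T)(\U_k)+\U_k=\U_k+(-T)(\U_k)$ and the observation that $(-T)(x)+y$ and $(-x)+T(y)$ differ by at most $1$ coordinatewise --- this is where the single unit of slack actually comes from), and then run the recursion inside sets of the form $B\cap-(B)_1$ with a case split according to whether some block carrying tetris level $0$ appears with coefficient $+1$ or all such blocks appear with $-1$ (in the latter case one applies the first case to $-x$ and uses subsymmetry). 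In Todorcevic's presentation, and in the paper's Lemma~\ref{lem3} plus the map $\Phi_k$, this naively yields oscillation $2$, which is then halved by working in $\fin_{\pm 2k}$ and collapsing levels two-to-one. Without the $-T$-compatible (subsymmetric) idempotent and this case analysis, your invariant that all combinations built so far lie in $(A)_1$ cannot be propagated past the first mixed-sign combination, so as written the proof does not go through.
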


There is also an exact version of Gowers' theorem, which we now describe: Given $k \in \N$, let $\fin_k$ denote the set of all functions $p: \omega \rightarrow k+1$ such that $\supp p$ is finite and $k \in \range(p)$. The ordering $<$ on $\fin_k$ and the sum $p+q$ of two elements of $\fin_k$ are defined analogously. The corresponding tetris operation $T : \fin_k \rightarrow \fin_{k - 1}$ is defined by
\[ T(p)(n) := \begin{cases}
	p(n) - 1 & \text{ if $p(n) > 0$},\\
	0 & \text{ if $p(n) = 0$}.
	\end{cases}
\]

As before, a block sequence in $\fin_k$ is a sequence $P = (p_n)_{n<\omega}$ such that $p_n < p_m$ whenever $n < m$. $\fin_k^{[\infty]}$ will denote the space of all infinite block sequences in $\fin_k$. The partial subsemigroup of $\fin_k$ generated by $P = (p_n)_{n <\omega} \in \fin_k^{[\infty]}$ is
\begin{equation*}
\begin{split}
[P]_k :=  \{ T^{j_0} (p_{n_0}) + \dots +  T^{j_m} (p_{n_m}) : &\,  m < \omega, n_0 < \dots < n_m < \alpha, \\
&   j_0, \dots, j_m < k \text{ and $\min j_i = 0$}\}
\end{split}
\end{equation*} 
and the set of all infinite block subsequences of $P$ will be denoted $[P]_k^{[\infty]}$. The following result was also proved by Gowers in \cite{G}.

\begin{thm}[Gowers]\label{gowersthm1}
For every $k, r \in \N$ and every $c: \fin_k \rightarrow r$ there is $P \in \fin_k^{[\infty]}$ such that $[P]_k$ is monochromatic.
\end{thm}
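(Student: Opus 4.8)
The plan is to run the idempotent--ultrafilter proof due to Gowers, which doubles as a template for the ultra-Ramsey arguments used later in the paper. For each $j\le k$ let $\gamma\fin_j$ denote the set of ultrafilters $\mathcal U$ on $\fin_j$ that are \emph{cofinal}, meaning $\{p\in\fin_j:\min\supp p>n\}\in\mathcal U$ for all $n<\omega$. Passing to cofinal ultrafilters makes the convolution
$$A\in\mathcal U+\mathcal V\iff\{p:\{q>p:p+q\in A\}\in\mathcal V\}\in\mathcal U$$
everywhere defined and turns $(\gamma\fin_j,+)$ into a compact right-topological semigroup. Since $\fin_i$ acts on $\fin_j$ by addition whenever $i\le j$, one may likewise form $\mathcal U+\mathcal V\in\gamma\fin_j$ for $\mathcal U\in\gamma\fin_i$ and $\mathcal V\in\gamma\fin_j$, and a direct computation with the definition of $T$ shows that $T$ extends to a continuous homomorphism $\widetilde T:\gamma\fin_j\to\gamma\fin_{j-1}$ which remains a homomorphism with respect to these mixed additions.

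The technical core, which I would state as a lemma and which I expect to be the main obstacle, is the existence of a \emph{compatible tower of idempotents}: idempotents $\mathcal U_1,\dots,\mathcal U_k$ with $\mathcal U_j\in\gamma\fin_j$, satisfying $\widetilde T(\mathcal U_j)=\mathcal U_{j-1}$ for $2\le j\le k$ and the absorption laws $\mathcal U_i+\mathcal U_j=\mathcal U_j+\mathcal U_i=\mathcal U_{\max\{i,j\}}$ for all $i,j$. I would build these inside the amalgamated compact right-topological semigroup $\gamma\bigl(\fin_1\sqcup\cdots\sqcup\fin_k\bigr)$, in which $\fin_k$ is a two-sided ideal (so that $\gamma\fin_k$ contains the smallest ideal) and each $\bigsqcup_{j\ge m}\fin_j$ spans a closed subsemigroup: take $\mathcal U_k$ to be a minimal idempotent, set $\mathcal U_j:=\widetilde T^{\,k-j}(\mathcal U_k)$, and then verify $\widetilde T^{\,i}(\mathcal U_k)+\mathcal U_k=\mathcal U_k=\mathcal U_k+\widetilde T^{\,i}(\mathcal U_k)$. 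Once one has $\widetilde T^{\,i}(\mathcal U_k)+\mathcal U_k=\mathcal U_k$, applying $\widetilde T^{\,\ell}$ yields $\widetilde T^{\,i+\ell}(\mathcal U_k)+\widetilde T^{\,\ell}(\mathcal U_k)=\widetilde T^{\,\ell}(\mathcal U_k)$, and the remaining absorption identities telescope out. The substantive work is in the verification: it uses the standard absorption behaviour of minimal idempotents in a compact right-topological semigroup together with the compatibility of $\widetilde T$ with all the mixed additions, and setting this up correctly is the delicate point. (This is essentially Gowers' key lemma from \cite{G}; it is also treated in \cite{T}.)

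Granting the tower, the theorem follows from a Galvin--Glazer-style recursion run at all levels simultaneously. Given $c:\fin_k\to r$, let $i^\ast<r$ be the colour with $A:=c^{-1}\{i^\ast\}\in\mathcal U_k$; it suffices to produce $P\in\fin_k^{[\infty]}$ with $[P]_k\subseteq A$. Starting from $A$ one produces a decreasing, $T$-coherent system of sets $A_n^{(j)}\subseteq\fin_j$ with $A_n^{(j)}\in\mathcal U_j$, and one recursively chooses $p_n\in\fin_k$ with $\min\supp p_n$ very large so that $T^{\,\ell}(p_n)\in A_n^{(k-\ell)}$ for every $\ell<k$ and so that each ``shift'' $\{q:T^{\,\ell}(p_n)+q\in A_n^{(\cdot)}\}$ is respected when passing from $A_n^{(\cdot)}$ to $A_{n+1}^{(\cdot)}$. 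Here $\widetilde T^{\,\ell}(\mathcal U_k)=\mathcal U_{k-\ell}$ ensures that $T^{\,\ell}(p_n)$ is ``$\mathcal U_{k-\ell}$-distributed'', and the absorption law $\mathcal U_{k-\ell}+\mathcal U_k=\mathcal U_k$ ensures that shifting a set in $\mathcal U_k$ by $T^{\,\ell}(p_n)$ keeps it in $\mathcal U_k$; thus the finitely many constraints defining the admissible $p_n$ cut out a set in $\mathcal U_k$, which is non-empty, and the recursion never stalls. Tracking memberships exactly as in the classical Hindman argument then shows that every combination $\sum_{i\in F}T^{\,j_i}(p_{n_i})$ with $\min_i j_i=0$ lies in $A$, so $[P]_k$ is monochromatic of colour $i^\ast$. (An alternative proof proceeds via a combination of the Hales--Jewett theorem and Hindman's theorem, but the ultrafilter proof is shorter and anticipates the methods of Sections 3 and 4.)
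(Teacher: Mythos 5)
Your overall architecture is the standard one (and the one behind the sources this paper cites for Theorem~\ref{gowersthm1}): cofinal ultrafilters to make the convolution total, a tower of idempotents $\mathcal{U}_1,\dots,\mathcal{U}_k$ with $\widetilde{T}(\mathcal{U}_j)=\mathcal{U}_{j-1}$ and $\mathcal{U}_i+\mathcal{U}_j=\mathcal{U}_j+\mathcal{U}_i=\mathcal{U}_j$ for $i\le j$, then a multi-level Galvin--Glazer recursion. The gap is exactly at the point you flag as ``the verification.'' Taking a minimal idempotent $\mathcal{U}_k$ of the amalgam and setting $\mathcal{U}_j:=\widetilde{T}^{\,k-j}(\mathcal{U}_k)$ does give idempotents with $\widetilde{T}(\mathcal{U}_j)=\mathcal{U}_{j-1}$, but the absorption identities $\widetilde{T}^{\,i}(\mathcal{U}_k)+\mathcal{U}_k=\mathcal{U}_k=\mathcal{U}_k+\widetilde{T}^{\,i}(\mathcal{U}_k)$, i.e.\ $\mathcal{U}_k\le \widetilde{T}^{\,i}(\mathcal{U}_k)$ in the canonical order on idempotents, do not follow from the standard behaviour of minimal idempotents. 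Minimality says no idempotent lies strictly below $\mathcal{U}_k$; what you need is that $\mathcal{U}_k$ lies below the \emph{particular} idempotents $\widetilde{T}^{\,i}(\mathcal{U}_k)$. The standard fact is that below any fixed idempotent $f$ there is \emph{some} minimal idempotent, but here $f=\widetilde{T}^{\,i}(\mathcal{U}_k)$ depends on the $\mathcal{U}_k$ already chosen, so you cannot pick $\mathcal{U}_k$ ``below its own image.'' Concretely, write $e=\mathcal{U}_k$, $f=\widetilde{T}^{\,i}(e)$: even granting $e+f=e$, the element $x:=f+e$ is an idempotent in the same minimal left ideal as $e$ satisfying $e+x=e$ and $x+e=x$; idempotents of a minimal left ideal form a left-zero system, so these relations are perfectly compatible with $x\neq e$, and $x=e$ is precisely the missing identity $f+e=e$. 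The right-sided identity $e+f=e$ is equally unsupported (one cannot even see that $e+f$ is idempotent without already knowing $f+e=e$). Since the whole theorem rests on this lemma, this is a genuine gap, not a detail.

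The repair is the construction used by Gowers and Todorcevic, which is reproduced in Section~3 of this paper for the word semigroups: build the tower \emph{upward} rather than pushing a minimal idempotent forward. Given $\mathcal{U}_{j-1}$, the fibre $S_j=\{\mathcal{V}\in\gamma\fin_j:\widetilde{T}(\mathcal{V})=\mathcal{U}_{j-1}\}$ is a non-empty closed subsemigroup (surjectivity of $T$ plus compactness); one picks an idempotent of the form $\mathcal{V}+\mathcal{U}_{j-1}$ in $S_j+\mathcal{U}_{j-1}$ and sets $\mathcal{U}_j:=\mathcal{U}_{j-1}+\mathcal{V}+\mathcal{U}_{j-1}$. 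The conjugation by $\mathcal{U}_{j-1}$ is what forces $\mathcal{U}_{j-1}+\mathcal{U}_j=\mathcal{U}_j+\mathcal{U}_{j-1}=\mathcal{U}_j$ (and then all lower-index absorptions telescope, as you note), while $\widetilde{T}(\mathcal{U}_j)=\mathcal{U}_{j-1}$ follows from the absorption already established at lower levels; minimality is never needed for the $\fin_k$ theorem (in Section~3 it is used only for the substitution property, Claim~\ref{claim1}). With the tower constructed this way, your second half --- the simultaneous multi-level Galvin--Glazer recursion, using $\mathcal{U}_a+\mathcal{U}_b=\mathcal{U}_{\max\{a,b\}}$ to keep the finitely many constraints in $\mathcal{U}_k$ --- is the standard argument and goes through.
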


We refer the reader to \cite{AT,K,T} for details and proofs of Gowers' theorems. The interested reader is also referred to \cite{Ojeda, Tyros} for discussions and proofs of the finite versions of Gowers' theorems.

\subsection{Infinite-dimensional Ramsey theory}

In this note we will be concerned with various infinite-dimensional versions of Gowers' theorems. In this setting one needs a topological restriction on the permitted colourings in order to obtain a Ramsey theorem. To describe such a restriction, first recall that a \emph{Souslin scheme} is a family of subsets $(X_s)_{s \in \omega^{< \omega}}$ of some underlying set which is indexed by finite sequences of non-negative integers. The \emph{Souslin operation} turns a Souslin scheme $(X_s)_{s \in \omega^{< \omega}}$ into the set $$\bigcup_{x \in \mathcal{N}} \bigcap_{n < \omega} X_{x \restriction n}$$ where $\mathcal{N}$ denote the Baire space, i.e. the set of all infinite sequences in $\omega$. Then a subset of a topological space $\mathcal{T}$ is \emph{Souslin measurable} if it belongs to the minimal field of subsets of $\mathcal{T}$ which contains all open sets and is closed under the Souslin operation. In particular, every analytic or coanalytic subset of $\mathcal{T}$ is Souslin measurable when $\mathcal{T}$ is Polish. Finally, we say that a finite colouring $c : \mathcal{T} \rightarrow n$ is Souslin measurable if each fibre $c^{-1}\{i\}, \, i < n$ is Souslin measurable.

The case $k=1$ of the following theorem is due to Milliken \cite{M} while the general case is due to Todorcevic \cite{T}; they are the infinite-dimensional versions of Hindman's theorem \cite{H} and Gowers' $\fin_k$ theorem, respectively.

\begin{thm}[Milliken-Todorcevic Theorem]\label{inffin}
For every finite Souslin measurable colouring of $\fin_k^{[\infty]}$ there is $B \in \fin_k^{[\infty]}$ such that $[B]_k^{[\infty]}$ is monochromatic.
\end{thm}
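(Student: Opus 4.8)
The plan is to deduce the infinite-dimensional Milliken-Todorcevic theorem from the finite-dimensional Gowers $\fin_k$ theorem (Theorem~\ref{gowersthm1}) via the standard Galvin-Prikry/Ellentuck-style machinery, adapted to the space $\fin_k^{[\infty]}$. First I would equip $\fin_k^{[\infty]}$ with the natural metrizable topology it inherits as a subspace of $([P_0]_k)^\omega$ (or, more precisely, work with the exponential topology generated by basic open sets determined by a finite initial segment of a block sequence together with a ``tail'' constraint coming from some $B \in \fin_k^{[\infty]}$), and then set up the combinatorial forcing apparatus: for a fixed colour class $\mathcal{C} \subseteq \fin_k^{[\infty]}$, a finite block sequence $s = (p_0, \dots, p_{n-1})$ with $\max\supp p_{n-1} < \min\supp b_0$ for all $b_i$ in some $B \in \fin_k^{[\infty]}$, and that $B$, one says $B$ \emph{accepts} $s$ if $[s ^\frown B']_k^{[\infty]} \cap (\text{basic nbhd of }s) \subseteq \mathcal{C}$ for every $B' \le B$, $B$ \emph{rejects} $s$ if no block subsequence of $B$ accepts $s$, and $B$ \emph{decides} $s$ if it accepts or rejects $s$. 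The key combinatorial lemma will be that for every $B$ and every $s$ there is a block subsequence of $B$ deciding $s$, and, by a fusion argument, one can pass to a single block subsequence $B^* \le B$ deciding every finite block sequence ``compatible'' with $B^*$ simultaneously.

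The heart of the matter is the amalgamation step: given $B$ rejecting $s$, one wants to find $B^* \le B$ rejecting $s ^\frown (q)$ for \emph{every} single element $q \in [B^*]_k$ extending $s$ appropriately. This is exactly where the \emph{finite} Gowers $\fin_k$ theorem enters — one colours the relevant finite combinatorial space of ``one-step extensions'' (elements of $[B]_k$ of a bounded complexity) by whether $B$ (or a suitable subsequence) accepts or rejects the corresponding extension of $s$, and applies Theorem~\ref{gowersthm1} to obtain a block subsequence on which this colouring is constant; rejection of $s$ forces the constant value to be ``reject,'' which is the desired propagation. Iterating this over all finite $s$ and all one-step extensions, interleaved with a fusion argument to keep the block subsequences converging, produces a $B^\infty \le B$ which is ``hereditarily rejecting'': if it rejects $s$ it rejects every legitimate one-step extension of $s$. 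But then, starting from the empty sequence (which, if not accepted by any subsequence, is rejected by $B^\infty$), one builds an element of $[B^\infty]_k^{[\infty]}$ lying outside $\mathcal{C}$, so $B^\infty$ cannot reject $\emptyset$; hence some $C \le B^\infty$ accepts $\emptyset$, which, unwinding the definitions, says $[C]_k^{[\infty]} \subseteq \mathcal{C}$. Running this first for one colour class gives either the desired $[C]_k^{[\infty]} \subseteq \mathcal{C}$ or an outright contradiction handled by passing to the complement; with $r$ colours one iterates the dichotomy $r$ times.

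To handle arbitrary Souslin measurable (not merely open or closed) colourings, I would invoke the standard reduction: the Souslin measurable sets form the smallest $\sigma$-field-like class containing the open sets and closed under the Souslin operation, and one shows by the usual ``abstract Ellentuck'' argument that the family of sets $\mathcal{C}$ for which the conclusion holds (the \emph{completely Ramsey} sets for $\fin_k^{[\infty]}$) contains the open sets and is closed under the Souslin operation — the closure under the Souslin operation being proved by the Nash-Williams/Galvin-Prikry-style analysis of well-founded trees together with another fusion. Thus it suffices to prove the theorem for open $\mathcal{C}$, which is the case treated by the forcing argument above.

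The step I expect to be the main obstacle is the amalgamation/propagation of rejection using the finite Gowers $\fin_k$ theorem: one must carefully define the finite colouring of one-step extensions so that Theorem~\ref{gowersthm1} applies (in particular, the set being coloured must genuinely be of the form $[F]_k$ for a finite block sequence $F$, and the colour of an extension must depend only on that extension and not on unspecified tail behaviour), and one must set up the fusion so that the successive applications — one for each finite compatible $s$ and each complexity bound — can be carried out along a single descending sequence of block subsequences whose ``diagonal'' lies in $\fin_k^{[\infty]}$. Managing the bookkeeping of the tetris operation $T$ and the condition $\min j_i = 0$ in the definition of $[P]_k$ throughout the fusion is where the technical care is concentrated; everything else is a routine adaptation of the Galvin-Prikry-Ellentuck template.
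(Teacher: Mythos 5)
The paper itself does not prove this statement: it is quoted from Milliken ($k=1$) and Todorcevic (general $k$), and the closest internal analogue, Theorem~\ref{infGHJ}, is proved by a different method (idempotent ultrafilters in $\beta S$ and the ultra-Ellentuck theorem for $\V_k$-trees, followed by Lemma~\ref{lem1}). Your route --- Ellentuck/Galvin--Prikry combinatorial forcing with fusion, Gowers' $\fin_k$ theorem as the pigeonhole, and closure of the completely Ramsey sets under the Souslin operation --- is the standard ``topological Ramsey space'' proof of this theorem and is viable in outline, but two steps are wrong or gapped as written. First, the pigeonhole step: the one-step extensions of $s$ inside $B$ do not form ``a finite combinatorial space of the form $[F]_k$ for a finite block sequence $F$''; they form the infinite partial semigroup generated by the tail of $B$ beyond $\supp s$, and Theorem~\ref{gowersthm1} says nothing when applied to a finite set. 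What you need is the localized statement: for every finite colouring of $[D]_k$, $D$ an infinite block sequence, there is $C \leq D$ with $[C]_k$ monochromatic. This does follow from Theorem~\ref{gowersthm1}, but only after checking that $([D]_k, +, T)$ is canonically isomorphic to $(\fin_k,+,T)$ via the unique representations $T^{j_0}(d_{n_0})+\dots+T^{j_m}(d_{n_m})$ with $\min_i j_i = 0$, an isomorphism that also respects block subsequences; this verification is exactly the content of the pigeonhole axiom and cannot be waved through.

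Second, the endgame is logically inverted. If the fusion yields $B^\infty$ which rejects $\emptyset$ and every finite block sequence from itself, then exhibiting an element of $[B^\infty]_k^{[\infty]}$ outside $\mathcal{C}$ is not in tension with rejecting $\emptyset$ --- it is what rejection predicts --- and your conclusion that ``some $C \leq B^\infty$ accepts $\emptyset$, hence $[C]_k^{[\infty]} \subseteq \mathcal{C}$'' would show that \emph{every} colour class contains a set of the form $[C]_k^{[\infty]}$, which is false (a colour class may be empty or avoidable). The correct conclusion for an open $\mathcal{C}$ is a dichotomy: either some $C$ accepts $\emptyset$, giving $[C]_k^{[\infty]} \subseteq \mathcal{C}$, or hereditary rejection plus openness gives $[B^\infty]_k^{[\infty]} \cap \mathcal{C} = \emptyset$, since any point of the intersection would have a basic neighbourhood inside $\mathcal{C}$ and its stem would then be accepted; one then iterates this dichotomy over the $r$ colour classes, at least one of which must occur positively. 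Relatedly, for closure under the Souslin operation the global dichotomy is not enough: you must carry out the forcing in the localized, Ellentuck-style form (completely Ramsey relative to basic sets $[s,B]$, with the Ramsey-null sets forming a $\sigma$-ideal closed under the operation), so the relative version has to be what you prove from the start rather than an afterthought. All of this is repairable along standard lines, but as written these are genuine gaps rather than bookkeeping.
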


Similarly, the following result from \cite{JK} gives an infinite-dimensional version of Gowers' $\fin_{\pm k}$ theorem.

\begin{thm}
For every finite Souslin measurable colouring of $c : \fin_{\pm k}^{[\infty]} \rightarrow r$ there are $B \in \fin_{\pm k}^{[\infty]}$ and $i < r$ such that the following holds: For every $A = (a_n)_{n<\omega} \in [B]_{\pm k}^{[\infty]}$ there is $\widetilde{A} = (\widetilde{a}_n)_{n<\omega} \in \fin_{\pm k}^{[\infty]}$ such that $$c(\widetilde{A}) = i \text{ and $||a_n - \widetilde{a}_n||_\infty \leq 1$ for all $n$}.$$
\end{thm}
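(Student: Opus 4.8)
The strategy is to reduce this approximate statement about $\fin_{\pm k}^{[\infty]}$ to the exact Milliken–Todorcevic Theorem (Theorem~\ref{inffin}) for $\fin_k^{[\infty]}$ via a suitable coding of signed block sequences by unsigned ones carrying extra ``sign data'', together with a pigeonhole/amalgamation argument. Concretely, I would first fix a way to present every element of $\fin_{\pm k}$ as an ``unsigned skeleton plus a finite sign pattern'': given $p \in \fin_{\pm k}$, let $|p| \in \fin_k$ be its coordinatewise absolute value, and record for each $n \in \supp p$ the sign of $p(n)$. Passing from a block sequence $A = (a_n)_{n<\omega} \in \fin_{\pm k}^{[\infty]}$ to $|A| := (|a_n|)_{n<\omega} \in \fin_k^{[\infty]}$, and noting that $\|a_n - \widetilde a_n\|_\infty \le 1$ holds as soon as $|a_n|$ and $|\widetilde a_n|$ agree on their common support up to the tetris-induced ambiguity, the point is that the approximate conclusion is insensitive to the sign choices and to a $\pm 1$ shift coming from the tetris operation. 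Thus a colouring $c : \fin_{\pm k}^{[\infty]} \to r$ should be replaced by an auxiliary colouring of $\fin_k^{[\infty]}$ that codes, in a Souslin-measurable way, the set of colours realized by signed lifts.

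The main steps, in order, would be: (1) Define, for $A \in \fin_k^{[\infty]}$, the set $S(A) \subseteq r$ of all $i$ such that some signed block sequence $\widetilde A$ with $|\widetilde A| = A$ (equivalently, lying within $\ell_\infty$-distance $1$ of a natural lift of $A$) satisfies $c(\widetilde A) = i$, and colour $A$ by $S(A) \in \mathcal P(r)$; verify that this colouring is Souslin measurable using that the lifting map is a continuous (indeed Lipschitz) surjection-like correspondence and that a projection of a Souslin set along a Polish fibre is Souslin. (2) Apply Theorem~\ref{inffin} to this $\mathcal P(r)$-valued colouring to get $B_0 \in \fin_k^{[\infty]}$ with $S$ constant, say equal to $\mathcal S$, on $[B_0]_k^{[\infty]}$; since $\mathcal S \ne \emptyset$ pick $i \in \mathcal S$. (3) Lift $B_0$ to some $B \in \fin_{\pm k}^{[\infty]}$ with $|B| = B_0$ (e.g. all signs positive), and argue that for every $A = (a_n)_{n<\omega} \in [B]_{\pm k}^{[\infty]}$ one has $|A| \in [B_0]_k^{[\infty]}$, hence $S(|A|) = \mathcal S \ni i$, so there is $\widetilde A$ with $c(\widetilde A) = i$ and $|\widetilde A| = |A|$; finally check that $|\widetilde a_n| = |a_n|$ for all $n$ forces $\|a_n - \widetilde a_n\|_\infty \le 1$ after absorbing the tetris shift into the definition of $S$ (possibly one needs $|\widetilde a_n|$ to differ from $|a_n|$ by exactly the tetris-induced $\pm 1$ on the support, which is why the bound is $1$ and not $0$).

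The step I expect to be the main obstacle is (1), specifically the bookkeeping that makes the reduction actually give the $\ell_\infty$-bound of $1$ uniformly along the generated partial subsemigroup. The subtlety is that when one takes a signed combination $\varep_0 T^{j_0}(b_{n_0}) + \dots + \varep_m T^{j_m}(b_{n_m})$ in $[B]_{\pm k}$, the tetris exponents $j_i$ and the signs $\varep_i$ are chosen \emph{per-block} by the adversary producing $A$, and one must ensure the lift $\widetilde A$ can match these choices block-by-block while still being an element of $\fin_{\pm k}^{[\infty]}$ with a $c$-colour controlled by the unsigned colouring. This requires the auxiliary colouring $S$ to be defined not on $\fin_k^{[\infty]}$ directly but on a richer space recording the tetris exponents — essentially the ``Gowers–Hales–Jewett'' style enrichment the introduction alludes to — and then checking that the Milliken–Todorcevic theorem (or its Hales–Jewett-flavoured strengthening) applies to that enriched space. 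Verifying Souslin measurability of $S$ through this enrichment, and confirming that the distance-$1$ slack genuinely suffices to cover all adversary choices, is where the real work lies; the rest is routine.
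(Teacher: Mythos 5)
There is a genuine gap, and it is not where you expect it. The fatal step is (3): knowing that \emph{some} signed lift $\widetilde{A}$ of the unsigned sequence $|A|$ has colour $i$ gives no control whatsoever on $\|A-\widetilde{A}\|$. If $a_n(m)=k$ while the lift produced by your powerset colouring has $\widetilde{a}_n(m)=-k$, then $\|a_n-\widetilde{a}_n\|_\infty=2k$, not $1$. Your guiding premise that ``the approximate conclusion is insensitive to the sign choices'' is exactly backwards: the $\varep$-slack of $1$ in the $\fin_{\pm k}$ theorem only absorbs the tetris ambiguity, and closeness in $\ell_\infty$ \emph{forces} $\widetilde{A}$ to reproduce the signs of $A$ wherever $|a_n(m)|\geq 2$. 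Since an adversary's $A\in[B]_{\pm k}^{[\infty]}$ involves infinitely many independent sign choices (one $\varep_i\in\{\pm1\}$ per block, plus signs inside each combination), no finite colouring of the unsigned side --- and in particular no $\mathcal{P}(r)$-valued colouring recording merely which colours are \emph{achievable} over all lifts --- can retain the information needed to match the given signs; so the enrichment you propose at the end (recording tetris exponents) attacks the wrong obstruction. A secondary issue: in step (1) the assertion that a projection of a Souslin-measurable set along a Polish fibre is Souslin measurable is false in general (the Souslin-measurable sets are not closed under projection --- this is precisely why infinite-dimensional Ramsey theorems are restricted to this class); at best one could try to exploit compactness of the fibre of sign patterns, but this needs an argument you do not supply.

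For comparison: a reduction to an exact unsigned theorem does exist (the paper alludes to it, citing Ojeda-Aristizabal), but it is a \emph{pullback}, not a pushforward, and it uses $\fin_{2k}$ rather than $\fin_k$. One codes signs into the parity of the unsigned values via $j\mapsto(-1)^{j}\lceil j/2\rceil$, so that two unsigned tetris moves correspond exactly to one signed tetris move and one unsigned tetris move corresponds, up to error $1$, to a sign reflection; the colouring of $\fin_{\pm k}^{[\infty]}$ is pulled back along this continuous map (so measurability is immediate), the Milliken-Todorcevic theorem is applied in $\fin_{2k}^{[\infty]}$, and the image block sequence works because every signed combination of it is within $1$ of the image of an unsigned combination. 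The route taken in the paper this statement is quoted from (and replayed in Section 4 here for words) is different again: one works directly with the signed objects and builds a \emph{subsymmetric} idempotent ultrafilter adapted to the operation $x\mapsto -T(x)$, runs the ultra-Ellentuck argument, and halves the resulting error $2$ to $1$ via the map $\Phi_k$ from the $\pm 2k$-level space to the $\pm k$-level space. Either of these handles the sign/tetris interaction explicitly, which is exactly what your absolute-value reduction discards.
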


We take this opportunity to remark that our approach toward obtaining ``approximate'' Ramsey theorems is certainly not optimal; in fact, there are simpler ways of transferring results about $\fin_k$ to $\fin_{\pm k}$ (see, for instance, \cite{Ojeda} for one such method). However, our approach has the advantage of isolating useful ultrafilters which may be of independent interest and which, in some sense, provide an explanation for the existence of the corresponding Ramsey results without relying on the companion result for $\fin_k$. Our hope is that the methods used in this paper (and the related paper \cite{JK}, both of which are in turn based on \cite{T}) will eventually lead to a unified approach to obtaining approximate Ramsey results, even in cases where there is no naturally associated ``exact'' result or when there is no way to transfer the exact results to the approximate setting.

\section{A parametrized Milliken-Todorcevic theorem}

In this section we will show that the parametrized Milliken theorem still holds when $\fin$ is replaced with $\fin_k$:

\begin{thm}[Parametrized Milliken-Todorcevic Theorem]\label{parafin}
For every finite\, \,\, Souslin measurable colouring of $\fin_k^{[\infty]} \times (2^\omega)^\omega$ there are $B \in \fin_k^{[\infty]}$ and a sequence $(P_i)_{i<\omega}$ of non-empty perfect subsets of $2^\omega$ such that $[B]_k^{[\infty]} \times \prod_{i<\omega} P_i$ is monochromatic.
\end{thm}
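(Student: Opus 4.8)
The plan is to follow the template established by Todorcevic's proof of the Parametrized Milliken Theorem (Theorem 1.1), but with the key combinatorial input replaced by a "Gowers--Hales--Jewett" theorem that simultaneously handles the $\fin_k$-structure and the combinatorial structure needed to locate perfect sets in $2^\omega$. The first step is to set up the coding machinery: a non-empty perfect subset of $2^\omega$ can be presented as the body of a perfect subtree $T \subseteq 2^{<\omega}$, and a sequence $(P_i)_{i<\omega}$ of perfect sets corresponds to a sequence of such trees. The standard trick (going back to Miller--Todorcevic and used by Pawlikowski) is to pass to a parametrization in which the $i$-th copy of $2^\omega$ is approximated along the levels of a single "master" perfect tree, so that a Souslin measurable colouring of $\fin_k^{[\infty]} \times (2^\omega)^\omega$ becomes, after fixing finite approximations, a colouring of a space to which an infinite-dimensional Ramsey theorem applies. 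Concretely, one replaces $(2^\omega)^\omega$ by the space of branches through a fixed sequence of finitely-branching trees and interleaves the $\fin_k$-coordinates with the tree-coordinates.

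The heart of the argument is the Gowers--Hales--Jewett theorem promised in the introduction (to be proved in this same section using ultra-Ramsey methods): for a finite Souslin measurable colouring of the space of infinite sequences of multi-variable words over the alphabet associated with $\fin_k$, there is a block sequence $B \in \fin_k^{[\infty]}$ all of whose multi-variable substitution instances get a single colour. The point is that the Hales--Jewett "variable" coordinates are exactly what encode the branching decisions in the perfect trees $P_i$: substituting the two constants for a variable corresponds to choosing the left or right immediate successor at a splitting node. Thus, after applying the Gowers--Hales--Jewett theorem to the colouring obtained by collapsing the tree-coordinates into word-coordinates, one recovers both a homogeneous block sequence $B \in \fin_k^{[\infty]}$ and, from the surviving variable positions, a sequence of perfect subtrees whose bodies $(P_i)_{i<\omega}$ together with $[B]_k^{[\infty]}$ form a monochromatic product. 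One then needs a fusion/diagonalization step to pass from the finite-approximation version to the genuine infinite-dimensional statement over $[B]_k^{[\infty]} \times \prod_{i<\omega} P_i$; this is where the Milliken--Todorcevic theorem (Theorem \ref{inffin}) itself is invoked, applied to $\fin_k^{[\infty]}$ with an auxiliary colouring that records, for each finite stage, the "decided" colour of the parametrized colouring.

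The main obstacle I expect is the bookkeeping that ties together the three ingredients: the $\fin_k$ block-sequence structure (with the tetris operation $T$ and the condition $\min j_i = 0$), the Hales--Jewett variable structure, and the Souslin measurability of the colouring. One must be careful that the Gowers--Hales--Jewett theorem is stated in enough generality — with multiple variables and over the correct alphabet — that a single application yields homogeneity on the full generated partial subsemigroup $[B]_k$ simultaneously with control over infinitely many perfect trees; getting the quantifier order right (first the block sequence, then the perfect sets, uniformly) is the delicate point. A secondary technical issue is ensuring that the perfect trees produced really are perfect (every node has a splitting extension): this requires that the Gowers--Hales--Jewett theorem be applied so as to preserve infinitely many free variables in each tree-coordinate, which in turn forces the correct choice of the auxiliary index sets when interleaving coordinates. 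Once the Gowers--Hales--Jewett theorem of this section is in hand, however, the deduction of Theorem \ref{parafin} parallels Todorcevic's proof of Theorem 1.1 closely enough that no genuinely new difficulty should arise beyond this organizational layer.
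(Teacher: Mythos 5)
Your overall strategy -- reduce the parametrized statement to an infinitary Gowers--Hales--Jewett theorem proved by ultra-Ramsey methods, with the word/variable structure coding the parameter space $(2^\omega)^\omega$ -- is indeed the paper's strategy. But your deduction of Theorem \ref{parafin} from that theorem contains a genuine misstep: the proposed ``fusion/diagonalization step'' in which Theorem \ref{inffin} is invoked with an auxiliary colouring recording the ``decided'' colour at each finite stage. In the actual argument no such step exists and none is needed: the Gowers--Hales--Jewett theorem used here (Theorem \ref{infGHJ}) is already infinite-dimensional -- it colours $W_{Lv_k}^{[\infty]}$ and homogenizes the full set $[Y]_{Lv_k}^{[\infty]}$ of infinite block subsequences -- so a single application to the colouring $c^*=c\circ(\varphi\times\psi)$ does all the work, where $\varphi:W_{Lv_k}^{[\infty]}\to\fin_k^{[\infty]}$ reads off the variable positions and $\psi:W_{Lv_k}^{[\infty]}\to 2^{\omega\times\omega}\cong(2^\omega)^\omega$ reads off the letters. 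Moreover, the auxiliary use of Theorem \ref{inffin} you describe is circular as stated: a colouring of $\fin_k^{[\infty]}$ recording ``the decided colour of the parametrized colouring'' is not well defined until the perfect sets have been fixed, and Theorem \ref{inffin} by itself gives no control over the second factor of the product, so it cannot supply the uniformity in $(P_i)_{i<\omega}$ that the theorem demands.

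The second gap is that the two points you flag as delicate -- producing genuinely perfect sets and getting the quantifiers right -- are exactly where the real content lies, and your left/right-successor coding over a master perfect tree (the Miller--Todorcevic/Pawlikowski device for a single perfect set) is not the mechanism that works for a sequence of perfect sets. The paper instead takes the alphabet $L=\bigcup_n L_n$ to consist of eventually-zero elements of $2^\omega$, interleaves the homogeneous sequence $Y=(y_m)$ by using the odd-indexed words $y_{2m+1}$ (their variable positions) to build $B\in\fin_k^{[\infty]}$ and the even-indexed words $y_{2m}$, substituted with letters $\sigma_{2m}\in L_{2m}$, to generate the parameter sequences; the $i$-th perfect set $P_i$ is then defined explicitly from the $i$-th coordinates of these substitutions, with perfectness coming from the free choice at the first variable occurrence in each even block. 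Finally, monochromaticity of $[B]_k^{[\infty]}\times\prod_i P_i$ requires the rewriting argument of the paper's second claim: an arbitrary $A\in[B]_k^{[\infty]}$ together with a parameter sequence must be exhibited as $(\varphi(Z),\psi(Z))$ for some $Z\in[Y]_{Lv_k}^{[\infty]}$, which is done by inserting the constant-$\vec 0$ substitution at the unused even blocks. Without these constructions (or equivalents) your outline does not yet yield the theorem, even granting the Gowers--Hales--Jewett input.
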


To do so, we will need a combinatorial result which can be seen a common infinite-dimensional generalization of Gowers' $\fin_k$ theorem and the Hales-Jewett theorem. We remark here that a general framework for obtaining infinitary Gowers-Hales-Jewett theorems has been developed in \cite{L}; other versions are considered in \cite{AvT} and alluded to in \cite{K}. Our approach is heavily inspired by that of \cite{T}.

Throughout this section, fix an infinite alphabet $L = \bigcup_{n < \omega} L_n$ given as an increasing union of finite subalphabets $L_n$, as well as a distinguished letter $0 \in L_0$ together with $k$ distinct variables $v_1, \dots, v_k \not \in L$. $W_L$ will denote the set of all variable-free words over $L$ and, for each $i \in \{1, \dots, k\}$, $W_{Lv_i}$ will denote the set of all variable words $x$ over $L$ such that $$ i = \max\{j \leq k : \text{ $v_j$ appears in $x$}\}.$$ An element of $W_{Lv_k}$ will be called a \emph{$v_i$-variable word}.

Let $$S = W_L \cup \bigcup_{1\leq i \leq k} W_{Lv_i}$$ and work in the semigroup $(S, \, ^\frown)$ where $^\frown$ denotes the concatenation operator on pairs of words. Given $x \in W_{Lv_k}$ and a $k$-tuple $\vec\lambda = (\lambda_1, \dots, \lambda_k) \in L^k \cup \{\vec v\}$, let $x[\vec\lambda]$ be the word obtained by replacing each occurrence of $v_i$ with $\lambda_i$, where $\vec v = (v_1, \dots, v_k)$. In addition to substitution, we also have a version of the tetris operation defined for $v_k$-variable words: Given $x \in W_{Lv_k}$, define $T(x) \in W_{Lv_{k-1}}$ by
\[ T(x)(n) := \begin{cases}
	v_{i-1} & \text{ if $x(n) = v_i$ for $i>1$},\\
	0 & \text{ if $x(n) = v_1$},\\
	x(n) & \text{ if $x(n) \in L$}.
	\end{cases}
\]
Define $T(w) = w$ for each $w \in W_L$. Given a sequence $X = (x_n)_{n < \omega}$ of $v_k$-variable words, the \emph{partial subsemigroup of $W_{Lv_k}$ generated by $X$}, denoted by $[X]_{Lv_k}$, is defined to be the set of all $v_k$-variable words of the form $$T^{j_0}(x_{n_0}[\vec\lambda_0])^\frown \dots ^\frown T^{j_l}(x_{n_l}[\vec\lambda_l])$$ where $l < \omega$, $n_0 < \dots < n_l < \omega$, $j_0, \dots, j_l \leq k$ and $\vec\lambda_i \in L_{n_i}^k \cup \{\vec v\}$ for each $i \leq l$; note that for such an expression to be a $v_k$-variable word, there must be some $i \leq l$ such that $j_i = 0$ and $\vec \lambda_i = \vec v$. We also consider the \emph{partial subsemigroup of $W_L$ generated by $X$}, defined as $$[X]_L = \{ x_{n_0}[\vec\lambda_0]^\frown \dots ^\frown x_{n_l}[\vec\lambda_l] \in W_L : n_0 < \dots < n_l < \omega \text{ and } (\forall i \leq l) \, \vec\lambda_i \in L_{n_i}^k \}.$$

Let $W_{Lv_k}^{[\infty]}$ denote the set of all infinite sequences $X = (x_n)_{n<\omega}$ in $W_{Lv_k}$ which are \emph{rapidly increasing}, i.e. sequences $(x_n)$ such that $$|x_n| > \sum_{i<n} |x_i| \text{ for all $n< \omega$}$$ where $|x|$ denotes the length (equivalently, the domain) of the word $x$. The notion of a finite rapidly increasing sequences is defined similarly. We equip $W_{Lv_k}^{[\infty]}$ with the \emph{metrizable topology}, i.e. the Polish topology generated by sets of the form $$\{(y_n)_{n<\omega} \in W_{Lv_k}^{[\infty]} : y_i = x_i \text{ for all $i \leq m$}\}$$ where $(x_0, \dots, x_m)$ is a finite rapidly increasing sequence in $W_{Lv_k}$. Given $x \in [X]_{Lv_k}$, the \emph{support} of $x$ in $X$, denoted $\supp_X(x)$, is the set $\{n_0 < \dots < n_m\}$ of indices such that $$T^{j_0}(x_{n_0}[\vec\lambda_0])^\frown \dots ^\frown T^{j_l}(x_{n_l}[\vec\lambda_l])$$ for some choice of $n_0 < \dots < n_l < \omega$, $j_0, \dots, j_l \leq k$ and $\vec\lambda_i \in L_{n_i}^k \cup \{\vec v\}$. The requirement that our sequences be rapidly increasing is necessary to ensure that $\supp_X(x)$ is uniquely defined. Using this observation, we can define an ordering $\leq$ on $W_{Lv_k}^{[\infty]}$ by setting, for rapidly increasing sequences $X = (x_n)$ and $Y$, $X \leq Y$ if and only if $x_n \in [Y]_{Lv_k}$ for all $n < \omega$ and $$\max \supp_Y(x_n) < \min \supp_Y(x_m) \text{ whenever $n < m$}.$$ In this case, we say that $X$ is a \emph{block subsequence} of $Y$; we denote by $[Y]_{Lv_k}^{[\infty]}$ the set of all infinite block subsequences of $Y$. The set of all finite block subsequences of a a finite rapidly increasing sequence $(y_0, \dots, y_m)$ will be denoted by $[y_0,\dots,y_m]_{Lv_k}$.

Our first goal is to prove the following theorem, which is a common generalization of Gowers' $\fin_k$ theorem and the infinitary Hales-Jewett theorem.

\begin{thm}\label{infGHJ}
For every finite Souslin measurable colouring of $W_{Lv_k}^{[\infty]}$, there is $X \in W_{Lv_k}^{[\infty]}$ such that $[X]_{Lv_k}^{[\infty]}$ is monochromatic.
\end{thm}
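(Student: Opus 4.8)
The plan is to follow the standard two-tier strategy for infinite-dimensional Ramsey theorems of this kind, as developed in \cite{T}: first isolate a one-dimensional ``pigeonhole'' principle — here a Gowers--Hales--Jewett theorem for single $v_k$-variable words — and then bootstrap it to the infinite-dimensional statement by a combinatorial forcing argument together with a fusion of finite approximations. Since the colouring is only assumed Souslin measurable, the final ingredient will be an unfolding of the Souslin operation, reducing the general case to that of open (equivalently, clopen) colourings.

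The first task is to set up the ultra-Ramsey machinery. Inside $\beta S$, the space of ultrafilters on $S$ concentrating on words whose support (relative to a fixed $Y \in W_{Lv_k}^{[\infty]}$) is arbitrarily far out forms a closed sub-semigroup under the extension of concatenation, and I would extract from it a chain $\mathcal U_k, \mathcal U_{k-1}, \dots, \mathcal U_0$ of idempotents — with $\mathcal U_i$ living on $W_{Lv_i}$ and $\mathcal U_0$ on $W_L$ — which is coherent with the tetris map, $T(\mathcal U_i) = \mathcal U_{i-1}$, and with the substitution operations $x \mapsto x[\vec\lambda]$. The delicate point is that a substitution performed inside the $n_i$-th block is only allowed to use letters from $L_{n_i}$, so the large sets generating the $\mathcal U_i$ must be chosen to respect this bounded-alphabet constraint; this is the place where the Gowers and Hales--Jewett phenomena are forced to coexist, and verifying the requisite closure properties of the relevant families of large sets is the technical core of this step.

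From the chain of idempotents, the one-dimensional pigeonhole follows by the usual recursive Galvin--Glazer construction: for every finite colouring of $W_{Lv_k}$ and every $Y \in W_{Lv_k}^{[\infty]}$ there is $X \leq Y$ with $[X]_{Lv_k}$ monochromatic — pick $x_0$ in a set that is $\mathcal U_k$-large and constant for the colouring, shrink, pick $x_1$ so that every word built from $x_0$ and $x_1$ via tetris and substitution keeps that colour, and iterate. For the infinite-dimensional step I would then run a combinatorial forcing argument on pairs $(s, Y)$, where $s$ is a finite rapidly increasing block sequence and $Y \in W_{Lv_k}^{[\infty]}$ is a suitable extension of $s$: define when $Y$ \emph{accepts} or \emph{rejects} a colour for $s$, use the one-dimensional pigeonhole to show some extension always decides, and diagonalize these decisions along a $\leq$-decreasing sequence to obtain a single $B$ deciding every finite block sequence at once.

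A Galvin--Prikry-type dichotomy (or, equivalently, verification of the abstract axioms of \cite{T} and an appeal to the corresponding abstract theorem) then yields, after one further shrinking, a monochromatic $[B]_{Lv_k}^{[\infty]}$ for clopen colourings; a routine closure argument covers closed colourings, and the general Souslin measurable case is handled by passing to the auxiliary space $W_{Lv_k}^{[\infty]} \times \mathcal N$, applying the closed case there to the unfolded colouring, and projecting. I expect the two main obstacles to be (i) engineering the chain of idempotents so that the bounded-letter restriction on substitutions is respected — a complication with no counterpart in the plain $\fin_k$ setting — and (ii) making the forcing and fusion argument compatible with the rapidly-increasing requirement, which is exactly what makes $\supp_X$ well-defined and hence keeps the whole apparatus coherent.
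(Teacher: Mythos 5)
Your outline is workable and is essentially the route of the general framework the paper itself cites (\cite{L}): build a chain of idempotents compatible with the tetris map and with substitutions, extract a one-dimensional Gowers--Hales--Jewett pigeonhole, and then bootstrap via combinatorial forcing or via verification of the abstract axioms of \cite{T}. The paper, however, takes a genuinely different and shorter path after the ultrafilter construction. Instead of using the idempotents only to prove a pigeonhole and then running an accept/reject forcing with fusion, it works with the local (ultra-)Ellentuck theory relative to $\V_k$: one defines $\V_k$-trees on $W_{Lv_k}^{[<\infty]}$, invokes the ultra-Ellentuck theorem (every set with the Baire property in the $\V_k$-topology is $\V_k$-Ramsey), and since the $\V_k$-topology refines the metrizable topology and the Baire-property field is closed under the Souslin operation, Souslin measurable colourings are handled in one stroke, yielding a colour $i$ and a $\V_k$-tree $U$ with stem $\emptyset$ and $[U]\subseteq c^{-1}\{i\}$ (Theorem~\ref{cor1}). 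The only ``forcing-like'' work remaining is Lemma~\ref{lem1}, a single Galvin--Glazer-style fusion using the identities $\V_k = T^j(\V_k)^\frown \V_k = {\V_k}^\frown T^j(\V_k)$, which produces $Y$ with $[Y]_{Lv_k}^{[\infty]}\subseteq [U]$; there is no accept/reject dichotomy, no verification of amalgamation/finitization axioms, and no unfolding. What your route buys is the full topological Ramsey space structure (an Ellentuck-style characterization of the Ramsey sets) at the cost of checking the axioms with the bounded-alphabet bookkeeping you rightly flag; what the paper's route buys is brevity and, as exploited in Section~4, the flexibility of the ultrafilter method for the approximate $\fin_{\pm k}$ results, where the same $\V_k$-tree machinery is reused with a subsymmetric ultrafilter.

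One correction to your endgame: reducing the general Souslin measurable case by unfolding over $W_{Lv_k}^{[\infty]}\times\mathcal{N}$ and projecting only covers analytic colourings. Souslin measurable sets form the minimal field containing the open sets and closed under the Souslin operation, so a single unfolding does not reach them; you need the closure of the (completely) Ramsey sets under the Souslin operation, which comes from the Baire-property/Marczewski-type argument built into the abstract or ultra Ellentuck theorem --- i.e., the alternative you mention parenthetically is not merely equivalent but is the one that actually delivers the stated generality.
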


To prove such a result we will use \emph{ultra-Ramsey theory} as developed in \cite{T}. Before we describe the relevant results, we need to construct an ultrafilter on $W_{Lv_k}$ which will be used throughout this section. To this end, work in the Stone-\v{C}ech compactification $(\beta S, \, ^\frown)$ of the semigroup $(S, \, ^\frown)$; we view $\beta S$ as the compact Hausdorff space consisting of all ultrafilters on $S$ with the topology generated by basic open sets of the form $$\overline{A} := \{\U \in \beta S : A \in \U\}$$ where $A$ is a non-empty subset of $S$. Given $\U \in \beta S$ and a first-order formula $\varphi(x)$ with a free variable $x$ ranging over elements of $S$, write $$(\U x) \varphi(x) \iff \{x \in S : \varphi(x)\} \in \U.$$ In this way, each $\U \in \beta S$ corresponds to an \emph{ultrafilter quantifier} on $S$. It is easy to check that ultrafilter quantifiers commute with conjunction and negation of first-order formulas. Using ultrafilter quantifiers, the extension of the concatenation operation to $\beta S$ is characterized as follows: $$A \in \U ^\frown \V \iff (\U x)(\V y) \, \, x^\frown y \in A.$$ Similarly, the extension of the tetris operation to $\beta S$ is determined by $$A \in T(\U) \iff (\U x) \, \, T(x) \in A.$$

Let $S^*$ denote the closed subsemigroup of $\beta S$ consisting of all non-principal ultrafilters on $S$ which are \emph{cofinite}, i.e. ultrafilters $\U$ such that $$\{x \in W_{Lv_i} : |x| > n\} \in \U \text{ for all $n < \omega$}.$$ Define $$S^*_L = \{\U \in S^* : W_L \in \U\}$$ and, for each $i \in \{1,\dots, k\}$, $$S^*_{Lv_i} = \{ \U \in S^* : W_{Lv_i} \in \U\}.$$ Then $S^*_L$ and $S^*_{Lv_i}$ (for each $1\leq i \leq k$) are closed subsemigroups of $S^*$. Let $\W$ be a minimal idempotent in $S^*_L$, and choose any idempotent $\V_1 \leq \W$ in $S^*_{Lv_1}$. Starting with $\V_1$, recursively construct a sequence $(\V_i)_{1 \leq i \leq k}$ of idempotents such that for each $i<j$:
\begin{enumerate}
	\item $\V_i$ is an idempotent in $S^*_{Lv_i}$.
	\item $\V_i \geq \V_j$.
	\item $T^{(j-i)}(\V_j) = \V_i$.
\end{enumerate}
Assume $\V_1, \dots, \V_{i-1}$ have been constructed and let $$S_i = \{\U \in S^*_{Lv_i} : T(\U) = \V_{i-1}\}.$$ Since $T : S^*_{Lv_i} \rightarrow S^*_{Lv_{i-1}}$ is a continuous surjective homomorphism, it follows (as in the proof of \cite[Lemma 2.24]{T}) that $$S_i ^\frown \V_{i-1} = \{\U ^\frown \V_{i-1} : \U \in S_i\}$$ is a non-empty closed subsemigroup of $S_i$. Thus there is an idempotent in ${S_i} ^\frown \V_{i-1}$ of the form $\U ^\frown \V_{i-1}$; then let $$\V_i = {\V_{i-1}} ^\frown \U ^\frown V_{i-1}.$$ It is routine to check that $\V_i$ is an idempotent which satisfies the required properties.

We will also need the following:

\begin{claim}\label{claim1}
For each $\vec\lambda \in L^k$ and each $i \leq k$, $\V_i[\vec\lambda] = \W$.
\end{claim}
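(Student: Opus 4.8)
The plan is to realize the substitution operation $\U\mapsto\U[\vec\lambda]$ as a continuous semigroup homomorphism of $\beta S$ that fixes $\W$, and then to invoke the minimality of $\W$.

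First I would record the properties of the substitution map $\sigma_{\vec\lambda}\colon S\to S$, $x\mapsto x[\vec\lambda]$ (defined on all of $S$, with $w[\vec\lambda]=w$ for $w\in W_L$). Since replacing variables by letters commutes with concatenation, $\sigma_{\vec\lambda}$ is a homomorphism of $(S,{}^\frown)$; it preserves word lengths, since each variable $v_j$ is replaced by a single letter $\lambda_j$; for a fixed word $w$ the set of $x$ with $x[\vec\lambda]=w$ is finite, since at each coordinate such an $x$ can take at most $k+1$ values; it maps each $W_{Lv_i}$ into $W_L$, since a word in $W_{Lv_i}$ involves only the variables $v_1,\dots,v_i$, all of which are substituted away; and it restricts to the identity on $W_L$. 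Passing to the Stone--\v{C}ech compactification, $\sigma_{\vec\lambda}$ extends to a continuous homomorphism of $\beta S$, namely the operation $\U\mapsto\U[\vec\lambda]$ determined by $A\in\U[\vec\lambda]\iff(\U x)\,x[\vec\lambda]\in A$. From the properties above one then reads off: (i) $\sigma_{\vec\lambda}$ maps $S^*_{Lv_i}$ into $S^*_L$ --- length-preservation keeps the image cofinite, finiteness of the fibres keeps it non-principal, and $W_{Lv_i}\in\U$ forces $W_L\in\U[\vec\lambda]$; and (ii) $\W[\vec\lambda]=\W$, because $\sigma_{\vec\lambda}$ is the identity on $W_L$ and $W_L\in\W$.

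Now fix $i\le k$ and $\vec\lambda\in L^k$ (if $i=0$, reading $\V_0:=\W$, the statement is just (ii)). By the construction $\V_i$ is an idempotent, and from property (2) of the sequence $(\V_j)$ and the choice $\V_1\le\W$, together with transitivity of the order $\le$ on idempotents, we have $\V_i\le\V_{i-1}\le\cdots\le\V_1\le\W$, hence $\V_i\le\W$. Applying the homomorphism $\sigma_{\vec\lambda}$ and using (ii): $\V_i[\vec\lambda]$ is again an idempotent, and $\V_i[\vec\lambda]\le\W[\vec\lambda]=\W$; by (i) it lies in $S^*_L$. Thus $\V_i[\vec\lambda]$ is an idempotent of $S^*_L$ below the minimal idempotent $\W$, so $\V_i[\vec\lambda]=\W$, since below a minimal idempotent of a compact right-topological semigroup there is no other idempotent.

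The only delicate point --- and the one I expect to be the main, if modest, obstacle --- is item (i): verifying that the Stone--\v{C}ech extension of $\sigma_{\vec\lambda}$ genuinely lands inside the closed subsemigroup $S^*_L$ rather than merely in $\beta S$. This comes down entirely to the length-preservation and finite-fibre properties of $\sigma_{\vec\lambda}$; everything else is formal manipulation with ultrafilter quantifiers and the standard theory of minimal idempotents.
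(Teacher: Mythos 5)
Your argument is correct and is essentially the paper's own proof: the paper likewise observes that $\U \mapsto \U[\vec\lambda]$ is a homomorphism fixing $\W$, deduces that $\V_i[\vec\lambda]$ is an idempotent of $S^*_L$ with $\V_i[\vec\lambda] \leq \W[\vec\lambda] = \W$, and concludes by minimality of $\W$. You simply spell out the routine verifications (length preservation, finite fibres, landing in $S^*_L$, and $\V_i \leq \W$ via property (2)) that the paper leaves implicit.
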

\begin{proof}
Since each mapping $\U \mapsto \U[\vec\lambda]$ for $\vec \lambda \in L^k$ is a homomorphism, it follows that $\V_i[\vec\lambda] \in S^*_L$ is an idempotent and $\V_i[\vec\lambda] \leq \W[\vec\lambda] = \W$, so that $\V_i[\vec\lambda] = \W$ by minimality of $\W$.
\end{proof}

Let $W_{Lv_k}^{[<\infty]}$ be the set of all finite rapidly increasing sequences in $W_{Lv_k}$. We view $W_{Lv_k}^{[<\infty]}$ as a tree ordered by end-extension and with root $\emptyset$, the empty word. The next two definitions are adapted from \cite[Chapter 7.2]{T} by replacing the the tree $\N^{[<\infty]}$ of finite subsets of $\N$ with $W_{Lv_k}^{[<\infty]}$.

\begin{defi}
A \emph{$\V_k$-tree} is a downward closed subtree $U \subseteq W_{Lv_k}^{[<\infty]}$ such that $$U_t := \{x \in W_{Lv_k} : (t, x) \in U\} \in \V_k$$ for all $t \in U$ which extend the stem of $U$, where the \emph{stem} is the $\sqsubseteq$-maximal element of $U$ which is comparable to every other node of the tree. The stem of a $\V_k$-tree $U$ will be denoted by $\stem(U)$.
\end{defi}

Given a $\V_k$-tree $U$, the set of infinite branches of $U$ is denoted by $$[U] := \{(x_n)_{n < \omega} \in W_{Lv_k}^{[\infty]} : (x_0, \dots, x_m) \in U \text{ for all $m < \omega$}\}.$$ For $t \in U$ let $|t|$ denote the \emph{length} of $t$, which is just the domain of $t$ when viewed as a finite sequence in $W_{Lv_k}^{[<\infty]}$.

\begin{defi}
Let $\X \subseteq W_{Lv_k}^{[\infty]}$. $\X$ is \emph{$\V_k$-open} if for every $A \in \X$ there is a $\V_k$-tree $U$ such that $A \in [U] \subseteq \X$. $\X$ is \emph{$\V_k$-Ramsey} if for every $\V_k$-tree $U$ there is a $\V_k$-subtree $U' \subseteq U$ with $\stem(U) = \stem(U')$ such that $[U'] \subseteq \X$ or $[U'] \subseteq \X^c$.
\end{defi}

The collection of all $\V_k$-open subsets of $W_{Lv_k}^{[\infty]}$ forms a topology, called the \emph{$\V_k$-topology}, which refines the metrizable topology of $W_{Lv_k}^{[\infty]}$. The next two results are adapted from \cite[Chapter 7.2]{T} by replacing the tree $\N^{[<\infty]}$ of finite subsets of $\N$ ordered by end-extension with the tree $W_{Lv_k}^{[<\infty]}$. We then have the following version of Todorcevic's ultra-Ellentuck theorem from \cite[Chapter 7]{T}.

\begin{thm}
Let $\X \subseteq W_{Lv_k}^{[\infty]}$. Then $\X$ has the property of Baire relative to the $\V_k$-topology if and only if $\X$ is $\V_k$-Ramsey.
\end{thm}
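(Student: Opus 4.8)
The plan is to adapt the proof of Todorcevic's ultra-Ellentuck theorem \cite[Chapter 7.2]{T} essentially line by line, with $\N^{[<\infty]}$ replaced by $W_{Lv_k}^{[<\infty]}$, keeping track of the only two structural facts about $\V_k$ that the argument really uses: that $\V_k$ is a non-principal \emph{idempotent} ultrafilter concentrated on $W_{Lv_k}$, so that $\V_k = \V_k ^\frown \V_k$ and $W_{Lv_k} \in \V_k$, and that $\V_k$ is \emph{cofinite}, so that $\{x \in W_{Lv_k} : |x| > N\} \in \V_k$ for every $N < \omega$. The former is what allows a fusion to glue $\V_k$-many local witnesses into a single $\V_k$-tree; the latter is what allows a fusion to respect the rapidly-increasing constraint.

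First I would dispose of the easy implication. If $\X$ is $\V_k$-Ramsey then, for any $\V_k$-tree $U$, the dichotomy yields $U' \leq U$ with $\stem(U') = \stem(U)$ and $[U'] \subseteq \X$ or $[U'] \subseteq \X^c$; a standard maximality argument over a family of $\V_k$-trees with pairwise incomparable stems then shows that $\X$ differs from a $\V_k$-open set by a $\V_k$-meager set, so $\X$ has the property of Baire relative to the $\V_k$-topology. This goes through exactly as in the Ellentuck setting.

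The substance is in the converse, whose heart is the combinatorial forcing lemma: \emph{every $\V_k$-open $\X$ is $\V_k$-Ramsey}. I would fix a $\V_k$-tree $U$ with stem $t_0$ and call a node $t \sqsupseteq t_0$ of $U$ \emph{good} if some $\V_k$-subtree of $U$ with stem $t$ has all of its branches inside $\X$. The crucial observation is that if $t$ is \emph{not} good, then $\{x \in U_t : (t,x) \text{ is not good}\} \in \V_k$: otherwise the $\V_k$-many $x$ with $(t,x)$ good could be amalgamated, using $\V_k = \V_k ^\frown \V_k$ together with cofiniteness (to keep the sequences rapidly increasing), into a single $\V_k$-subtree with stem $t$ witnessing that $t$ itself is good. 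If $t_0$ happens to be good, a witnessing subtree already gives a $\V_k$-subtree of $U$ with stem $t_0$ all of whose branches lie in $\X$; otherwise a fusion thins $U$ to a $\V_k$-subtree $U'$ with $\stem(U') = t_0$ in which not-goodness propagates to every immediate successor, so that \emph{no} node of $U'$ is good --- and then, since $\X$ is $\V_k$-open (so a branch of $U'$ lying in $\X$ would certify goodness of one of its initial segments), we get $[U'] \subseteq \X^c$.

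Finally I would upgrade this as in \cite[Chapter 7.2]{T}: localize the forcing lemma to show every $\V_k$-open set is \emph{completely} $\V_k$-Ramsey; deduce that every $\V_k$-nowhere dense set, and hence (by a diagonal fusion) every $\V_k$-meager set, is $\V_k$-Ramsey null, meaning that for each $\V_k$-tree $U$ there is $U' \leq U$ with the same stem and $[U'] \cap \X = \emptyset$; check that the $\V_k$-Ramsey-null sets form a $\sigma$-ideal and that the completely $\V_k$-Ramsey sets form a field containing both the $\V_k$-open sets and the $\V_k$-Ramsey-null sets; then, since a set has the property of Baire relative to the $\V_k$-topology exactly when it is the symmetric difference of a $\V_k$-open set and a $\V_k$-meager set, conclude. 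The hard part will not be any one of these steps --- each is a known argument from \cite{T} --- but the bookkeeping they share: ensuring that every finite stage of every fusion outputs a genuine $\V_k$-tree. That is precisely where idempotency (via the identity $A \in \V_k \iff \{x : x^{-1}A \in \V_k\} \in \V_k$, with $x^{-1}A = \{y : x^\frown y \in A\}$) and cofiniteness (to maintain rapid increase) do all the work; once these are isolated, the substitution of $W_{Lv_k}^{[<\infty]}$ for $\N^{[<\infty]}$ is transparent.
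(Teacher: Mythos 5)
Your outline follows essentially the same route as the paper: the paper offers no proof of this theorem beyond the remark that it is Todorcevic's ultra-Ellentuck theorem \cite[Chapter 7.2]{T} with the tree $\N^{[<\infty]}$ replaced by $W_{Lv_k}^{[<\infty]}$, and your sketch is precisely that adaptation via the standard combinatorial-forcing and fusion arguments, so as a plan it is sound (modulo the usual glossed point that, in the open case, a branch of $U'$ lying in $\X$ certifies goodness of an initial segment only after intersecting the witnessing $\V_k$-tree with $U'$ above a sufficiently long node).

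One correction, though, about where the structure of $\V_k$ is actually used. The ultra-Ellentuck theorem needs only that $\V_k$ is a cofinite ultrafilter concentrating on $W_{Lv_k}$; idempotency plays no role in it. The amalgamation you highlight --- if $\V_k$-many immediate successors $(t,x)$ of $t$ are good, then $t$ is good --- is a plain union of the witnessing subtrees indexed by the $\V_k$-set of good $x$, and the fusions need only closure of $\V_k$ under finite intersections; rapid increase is automatic for subtrees of $W_{Lv_k}^{[<\infty]}$, cofiniteness being what makes $\V_k$-trees exist in the first place. In particular, the identity $\V_k = \V_k ^\frown \V_k$ (equivalently, $A \in \V_k \iff \{x : x^{-1}A \in \V_k\} \in \V_k$) concerns concatenation of \emph{words}, whereas nodes of the tree are extended by appending a new word to a finite \emph{sequence} of words, so it does not literally apply to the amalgamation step as you describe it. In the paper's architecture, the semigroup structure and idempotency of $\V_k$ (together with its interaction with the tetris operation) enter only after this theorem, in Lemma \ref{lem1}, where a $\V_k$-tree $U$ is converted into a block sequence $Y$ with $[Y]_{Lv_k}^{[\infty]} \subseteq [U]$. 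Since $\V_k$ happens to be idempotent, this misattribution does not break your argument, but the justification given for the key amalgamation is spurious, and the cleaner statement (and proof) of the present theorem is for an arbitrary cofinite ultrafilter on $W_{Lv_k}$.
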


Using the fact that the property of Baire is preserved under the Souslin operation (see, e.g., \cite[Corollary 4.8]{T}) we then have:

\begin{thm}\label{cor1}
For every $r \in \N$ and every Souslin measurable $c : W_{Lv_k}^{[\infty]} \rightarrow r$ there are $i < r$ and a $\V_k$-tree $U$ with stem $\emptyset$ such that $[U] \subseteq c^{-1}\{i\}$.
\end{thm}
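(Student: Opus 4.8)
The plan is to obtain Theorem~\ref{cor1} as a direct consequence of the ultra-Ellentuck theorem stated above, combined with a short iteration over the $r$ colour classes of $c$.

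First I would transfer the topological hypothesis on $c$ from the metrizable topology to the $\V_k$-topology. Since the $\V_k$-topology refines the metrizable topology on $W_{Lv_k}^{[\infty]}$, every metrizable-open set is $\V_k$-open; being $\V_k$-open it is $\V_k$-Ramsey by the ultra-Ellentuck theorem, hence has the property of Baire relative to the $\V_k$-topology. The family of subsets of a fixed space having the property of Baire forms a field which is moreover closed under the Souslin operation (\cite[Corollary~4.8]{T}). Therefore the minimal field of subsets of $W_{Lv_k}^{[\infty]}$ containing the metrizable-open sets and closed under the Souslin operation --- that is, the family of Souslin measurable subsets of $W_{Lv_k}^{[\infty]}$ --- is contained in the family of sets having the property of Baire relative to the $\V_k$-topology. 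In particular each fibre $c^{-1}\{i\}$, $i<r$, has the property of Baire relative to the $\V_k$-topology, so by the ultra-Ellentuck theorem each $c^{-1}\{i\}$ is $\V_k$-Ramsey.

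Next I would note that $W_{Lv_k}^{[<\infty]}$ is itself a $\V_k$-tree with stem $\emptyset$: for any $t \in W_{Lv_k}^{[<\infty]}$ the section $(W_{Lv_k}^{[<\infty]})_t$ equals $\{x \in W_{Lv_k} : |x| > \sum_{i<|t|}|t(i)|\}$, which belongs to $\V_k$ because $\V_k$ is cofinite. Starting from $U^{(0)} = W_{Lv_k}^{[<\infty]}$, I would iterate the $\V_k$-Ramsey property: given a $\V_k$-tree $U^{(j)}$ with stem $\emptyset$, apply the $\V_k$-Ramsey property of $c^{-1}\{j\}$ to $U^{(j)}$ to obtain a $\V_k$-subtree $U^{(j+1)} \subseteq U^{(j)}$ with $\stem(U^{(j+1)}) = \stem(U^{(j)}) = \emptyset$ and with $[U^{(j+1)}] \subseteq c^{-1}\{j\}$ or $[U^{(j+1)}] \cap c^{-1}\{j\} = \emptyset$. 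Since $U^{(j+1)} \subseteq U^{(j)}$ gives $[U^{(j+1)}] \subseteq [U^{(j)}]$, the decision made at stage $j$ is preserved at all later stages, so $U := U^{(r)}$ is a $\V_k$-tree with stem $\emptyset$ which is \emph{decisive} for $c$, in the sense that for every $i<r$ either $[U] \subseteq c^{-1}\{i\}$ or $[U]\cap c^{-1}\{i\} = \emptyset$. Because $\V_k$ is non-principal, each section $U_t$ is infinite, so a straightforward recursion produces an infinite branch of $U$; hence $[U]\neq\emptyset$. As the fibres $c^{-1}\{i\}$, $i<r$, partition $W_{Lv_k}^{[\infty]}$, there is a unique $i<r$ with $[U] \subseteq c^{-1}\{i\}$, which is exactly the conclusion of the theorem.

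The argument is essentially formal once the ultra-Ellentuck theorem and the idempotent ultrafilter $\V_k$ are in hand, so I do not expect a serious obstacle here; the only points requiring care are the bookkeeping in transferring Souslin measurability (defined via the metrizable topology) to the property of Baire relative to the finer $\V_k$-topology, keeping the stem fixed at $\emptyset$ throughout the $r$-fold iteration, and the routine verification that a $\V_k$-tree has a non-empty set of branches.
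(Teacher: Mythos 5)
Your proof is correct and takes essentially the same route as the paper: Theorem \ref{cor1} is obtained from the ultra-Ellentuck theorem together with the fact that the property of Baire (here relative to the $\V_k$-topology, which refines the metrizable one) is closed under the Souslin operation, so that each fibre $c^{-1}\{i\}$ is $\V_k$-Ramsey. Your colour-by-colour iteration starting from the full tree $W_{Lv_k}^{[<\infty]}$ (a $\V_k$-tree with stem $\emptyset$ since $\V_k$ is cofinite) and the observation that $[U]\neq\emptyset$ are exactly the routine details the paper leaves implicit.
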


Our next goal is to show that for any $\V_k$-tree $U$ there is a rapidly increasing word $Y$ with the property that $X \in [U]$ whenever $X \leq Y$. To this end, we have the following key lemma:

\begin{lem}\label{lem1}
For every $\V_k$-tree $U$ with stem $\emptyset$ there is $Y = (y_n)_{n < \omega} \in W_{Lv_k}^{[\infty]}$ together with a decreasing sequence $(A_n)_{n<\omega}$ of subsets of $W_{Lv_k}$ such that:
	\begin{enumerate}[(a)]
		\item $A_n \subseteq U_t$ for every $t \in U$ such that $t \leq (y_0, \dots, y_{n-1})$.
		\item $[ y_m, \dots, y_n]_{Lv_k} \subseteq A_m$ for all $m \leq n < \omega$.
	\end{enumerate}
\end{lem}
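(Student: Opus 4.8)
The plan is to build $Y=(y_n)_{n<\omega}$ and the decreasing sequence $(A_n)_{n<\omega}$ by a recursion that interleaves the two, constructing $A_0,y_0,A_1,y_1,\dots$ in that order, along the lines of the proof of the analogous statement for $\fin$ in \cite[Chapter 7.2]{T}. Two points make the present case more involved: the generation of $[X]_{Lv_k}$ uses the tetris map $T$ and the substitutions $x\mapsto x[\vec\lambda]$, so the Galvin--Glazer machinery must be set up to absorb these; and while each $A_n$ is a subset of $W_{Lv_k}$ (hence at best large in $\V_k$), the ``quotients'' $A_m/u:=\{z\in S:u^\frown z\in A_m\}$ by $v_k$-variable words $u$ range over all of $S$, and the invariant I would maintain is that these quotients are \emph{uniformly large}, i.e. $(A_m/u)\cap W_{Lv_i}\in\V_i$ for every $i$ (with the conventions $\V_0:=\W$, $W_{Lv_0}:=W_L$); for $u$ not $v_k$-variable the quotient lies in $W_{Lv_k}$ and one only asks $A_m/u\in\V_k$. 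The three identities I would call on repeatedly are $\V_i{}^\frown\V_j=\V_{\max(i,j)}$ for $i,j\le k$ (from idempotency and $\V_i\ge\V_j$ for $i<j$), $T^j(\V_k)=\V_{k-j}$ for $j\le k$ (with $T^k(\V_k)=\W$, consistent with the tower since $T(\V_1)$ is an idempotent of $S^*_L$ below $\W$ and so equals $\W$), and $\V_i[\vec\lambda]=\W$ for $\vec\lambda\in L^k$, which is Claim~\ref{claim1}.

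The technical core---and the step I expect to be the main obstacle---is a closure lemma. For $B\in\V_k$ put $B^\sharp:=\{x\in B:B/x\text{ is uniformly large}\}$; I would verify $B^\sharp\in\V_k$, and, more generally, that for any uniformly large $C\subseteq S$ and any finite $F\subseteq L^k$ the set of $x\in W_{Lv_k}$ such that $C/T^j(x)$ is uniformly large for all $j\le k$ and $C/x[\vec\lambda]$ is uniformly large for all $\vec\lambda\in F$ again belongs to $\V_k$. The first assertion uses only $\V_k{}^\frown\V_i=\V_k$; the second is where $T^j(\V_k)=\V_{k-j}$ and $\V_k[\vec\lambda]=\W$ come in (preimages of $\V_{k-j}$- and $\W$-large sets under these maps are $\V_k$-large), together with $\V_i{}^\frown\V_j=\V_{\max(i,j)}$ (which is what makes the sets ``$C/y$ is uniformly large'' large in $\V_{k-j}$, resp.\ $\W$). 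Keeping track of which subalphabet and which variable-set each image lands in is the delicate part of this verification.

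Granting the closure lemma, the recursion runs as follows. Set $A_0:=U_\emptyset^\sharp$; since $\stem(U)=\emptyset$ we have $U_\emptyset\in\V_k$, so $A_0\in\V_k$ and (a) holds at $n=0$. Given $y_0<\dots<y_{n-1}$ and $A_0\supseteq\dots\supseteq A_{n-1}$ for which (a), (b) and uniform largeness hold below $n$, I would put
\[ A_n:=\Bigl(A_{n-1}\cap\bigcap\{U_t:t\in U,\ t\le(y_0,\dots,y_{n-1})\}\Bigr)^\sharp, \]
an intersection over finitely many $t$ (as $[y_0,\dots,y_{n-1}]_{Lv_k}$ is finite) with each $U_t\in\V_k$, so $A_n\subseteq A_{n-1}$, $A_n\in\V_k$, and (a) holds at $n$. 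I would then choose $y_n$ in the intersection of: $A_n$; the $\V_k$-large set $\{x:|x|>\sum_{i<n}|y_i|\}$ (cofiniteness of $\V_k$, to force rapid increase); and, for each $m\le n$ and each of the finitely many words $u=T^{j_0}(y_{n_0}[\vec\lambda_0])^\frown\dots^\frown T^{j_l}(y_{n_l}[\vec\lambda_l])$ with $m\le n_0<\dots<n_l\le n-1$ (or $u=\emptyset$), the $\V_k$-large set supplied by the closure lemma of those $x$ all of whose images $T^j(x)$ and $x[\vec\lambda]$ ($\vec\lambda\in L_n^k$) send $A_m/u$ to a quotient which is uniformly large, resp.\ in $\V_k$. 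This is a finite intersection of members of $\V_k$, hence nonempty.

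Finally I would check (b) at $n$: a member of $[y_m,\dots,y_n]_{Lv_k}$ not involving $y_n$ lies in $[y_m,\dots,y_{n-1}]_{Lv_k}\subseteq A_m$ by induction, and one involving $y_n$ has the form $u^\frown z$ where $z=T^j(y_n[\vec\lambda])$ is the final (and only) $y_n$-block and $u$ is a concatenation of images of $y_m,\dots,y_{n-1}$ or $u=\emptyset$. If the occurrence of $v_k$ is contributed by $z$ then $z=y_n$, and $u^\frown y_n\in A_m$ because $y_n$ was chosen in the $\V_k$-large set $(A_m/u)\cap W_{Lv_k}$; if it is contributed by $u$ then $u$ is $v_k$-variable and $u^\frown z\in A_m$ because the choice of $y_n$ guarantees $z\in A_m/u$. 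In either case $u^\frown z\in A_m$, and the uniform-largeness of the new quotients $A_m/(u^\frown z)$ follows from the closure lemma applied to the way $y_n$ was chosen. Iterating over $n$ yields $Y=(y_n)_{n<\omega}$ and $(A_n)_{n<\omega}$ with (a) and (b).
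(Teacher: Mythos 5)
Your proposal is correct in outline, but it takes a genuinely different (and heavier) route than the paper. The paper's proof maintains only the two-term condition $A_{n+1}\subseteq\{z\in W_{Lv_k}:[y_n,z]_{Lv_k}\subseteq A_n\}$, obtained by choosing $y_n$ with $(\V_k z)\left([y_n,z]_{Lv_k}\subseteq A_n\right)$, which follows from the single family of identities $\V_k=T^j(\V_k)^\frown\V_k={\V_k}^\frown T^j(\V_k)$ for $j\le k$ together with Claim~\ref{claim1}; condition (b) for words of arbitrary length is then recovered a posteriori by a downward induction on $m$ with a factoring trick: a word in $[y_{m-1},\dots,y_n]_{Lv_k}$ either has a later block with $j_i=0$, in which case one peels off the first block and applies the inductive hypothesis to the tail, or else all later tetris exponents are positive, in which case one pulls the common power $T^l$ (with $l=\min\{j_m,\dots,j_n\}>0$) out of the tail and rewrites the word as ${y_{m-1}}^\frown T^l(z'')$ with $z''\in[y_m,\dots,y_n]_{Lv_k}\subseteq A_m$; in both cases membership in $A_{m-1}$ follows from the two-term condition alone. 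Your Galvin--Glazer-style bookkeeping instead carries the full invariant that every quotient $A_m/u$ by a word $u$ generated from $y_m,\dots,y_{n-1}$ is (uniformly) large across $\W,\V_1,\dots,\V_k$, supported by a closure lemma built from exactly the identities the paper uses ($\V_i{}^\frown\V_j=\V_{\max(i,j)}$, $T^j(\V_k)=\V_{k-j}$ with $T^k(\V_k)=\W$, and $\V_i[\vec\lambda]=\W$); this works, and it makes the mechanism explicit and potentially reusable, at the cost of substantially more bookkeeping (finitely many quotients per stage, with two regimes depending on whether $u$ is $v_k$-variable). One point to tighten: as written, the finite intersection defining the choice of $y_n$ only lists the sets forcing the new quotients $A_m/(u^\frown T^j(x[\vec\lambda]))$ to be large, whereas your verification of (b) also invokes membership sets such as $(A_m/u)\cap W_{Lv_k}$ and $\{x:T^j(x[\vec\lambda])\in A_m/u\}$; these are $\V_k$-large by the same identities, so they should simply be added to the intersection, as your final paragraph implicitly assumes.
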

\begin{proof}
By induction on $n$, define a decreasing sequence $(A_n)_{n<\omega}$ together with a rapidly increasing sequence $(y_n)_{n<\omega}$ such that, for all $n < \omega$:
	\begin{enumerate}
		\item $y_n \in A_n \in \V_k$.
		\item $A_{n+1} \subseteq \{z \in W_{Lv_k} : [ y_n, z ]_{Lv_k}  \subseteq A_n\}$.
		\item $A_n \subseteq U_t$ for every $t \in U$ such that $t \leq (y_i)_{i < n}$.
	\end{enumerate}
To start, take $A_0 := U_\emptyset$ and note that $A_0 \in \V_k$ since $U$ is $\V_k$-tree. Using the properties of the sequence $(\V_i)_{1 \leq i \leq k}$ of idempotents constructed above, we have $$\V_k = T^j(\V_k) ^\frown \V_k = {\V_k}^\frown T^j(\V_k)$$ for all $j \leq k$. Rewriting this fact in terms of the ultrafilter quantifier and using the fact that $A_0 \in V_k$, it follows that $$(\V_k y)(\V_k z) \left([ y, z ]_{Lv_k}  \subseteq A_0\right)$$ and so we take any $y_0 \in W_{Lv_k}$ such that $(\V_k z)\left( [ y_0, z ]_{Lv_k}  \subseteq A_0\right)$; in particular $y_0 \in A_0$ by definition of $[ y_0, z ]_{Lv_k} $. We then take $A_1$ to be the intersection of the set $\{z \in W_{Lv_k} : [ y_0, z ]_{Lv_k}  \subseteq A_0\}$ with $$\bigcap\left\{ U_t : t\in U \text{ and} \, t \leq (y_0) \right\}.$$ Note that $A_0 \supseteq A_1$ and $A_1 \in \V_k$ since there are only finitely many $t \in U$ satisfying $t \leq (y_0)$ and since each $U_t \in \V_k$.

Now suppose $A_0, \dots, A_n$ and $y_0, \dots, y_{n-1}$ have been constructed. Since $\V_k$ is cofinite, it follows that there is $y_n \in W_{Lv_k}$ such that $$ |y_n| > \sum_{i=0}^{n-1} |y_i|$$ and $(\V_k z)\left( [ y_n, z ]_{Lv_k}  \subseteq A_n\right)$; in particular $y_n \in A_n$. Then take $A_{n+1}$ to be the intersection of the set $\{z \in W_{Lv_k} : [ y_n, z ]_{Lv_k}  \subseteq A_n\}$ with $$\bigcap\left\{ U_t  :t \in U \text{ and} \, t \leq (y_0,\dots, y_n) \right\}.$$ Observe that the collection $[y_0,\dots, y_{n-1}]_{Lv_k} $ is finite since we only allow substitutions of the form $y_i[\vec \lambda_i]$ for $\vec \lambda_i \in L_i^k \cup \{\vec v\}$ and so there are only finitely many sets in the above intersection. Thus $A_{n+1} \in \V_k$ and $A_n \supseteq A_{n+1}$. This completes the inductive construction of the sequences $(A_n)$ and $(y_n)$. In particular, condition (a) is satisfied by (3).

We check condition (b) by downward induction on $m \leq n$ for $n < \omega$ fixed. The case $m = n$ follows from (1), while the case $m = n -1 $ follows using (1) and (2) to obtain $[ y_{n-1}, y_n ]_{Lv_k}  \subseteq A_{n-1}$. Now suppose inductively that (b) holds for some $m \leq n$; we aim to show $[ y_{m-1}, y_m, \dots, y_n ]_{Lv_k}  \subseteq A_{m-1}$. Take any $$z = T^{j_{m-1}}(y_{m-1}[\vec \lambda_{m-1}]) ^\frown \dots ^\frown T^{j_n}(y_n[\vec \lambda_n])$$ with $j_{m-1}, \dots, j_n \leq k$ and $\vec \lambda_i \in L_i^k \cup \{\vec v\}$ are such that $\min j_i = 0$ and $\vec \lambda_i = \vec v$ for some $i \in \{m-1, \dots, n\}$. We consider two cases: Suppose first that there is $i > m-1$ such that $j_i = 0$. Then $$z' := T^{j_m}(y_m[\vec \lambda_m]) ^\frown \dots ^\frown T^{j_n}(y_n[\vec \lambda_n]) \in [ y_m, \dots, y_n]_{Lv_k}  \subseteq A_m$$ where the inclusion comes from the inductive hypothesis. Thus $z' \in A_m$ and so $$z \in [ y_{m-1}, z ]_{Lv_k}  \subseteq A_{m-1}$$ by (2). Now suppose $j_i > 0$ for each $i > m-1$ (so that, in particular, $j_{m-1} = 0$). Let $l := \min\{j_m, \dots, j_n\} > 0$ and write $$z = {y_{m-1}}^\frown  T^l \left ( T^{j_m - l}(y_m[\vec \lambda_m]) ^\frown \dots ^\frown T^{j_n - l}(y_n[\vec \lambda_n])\right).$$ By the inductive hypothesis we have $$z'' := T^{j_m - l}(y_m[\vec \lambda_m]) ^\frown \dots ^\frown T^{j_n - l}(y_n[\vec \lambda_n]) \in [ y_m, \dots, y_n ]_{Lv_k}  \subseteq A_m,$$ and so $z \in [ y_{m-1}, z'' ]_{Lv_k}  \subseteq A_{m-1}$ by (2). This completes the proof of the lemma.
\end{proof}

\begin{proof}[Proof of Theorem \ref{infGHJ}]
Let $c : W^{[\infty]}_{Lv_k} \rightarrow r$ be Souslin measurable. By Corollary \ref{cor1} there is a $\V_k$-tree $U$ with stem $\emptyset$ such that $[U] \subseteq c^{-1}\{i\}$ for some $i < r$. Let $Y = (y_n)$ be the rapidly increasing sequence given by applying Lemma \ref{lem1} to $U$. To finish the proof of the theorem, it is enough to show $[Y]_{Lv_k}^{[\infty]} \subseteq [U]$. Let $X = (x_n) \leq Y$; we show $X \in [U]$ by induction on the length $m$ of $s := (x_0, \dots, x_{m-1})$. When $m=0$ we have $s = \emptyset$ which belongs to $U$ by assumption. So we assume $s \in U$ and show $(s, x_m) \in U$. Since $x_m \in [Y]_{Lv_k}$, we can write $$x = T^{j_0}(y_{n_0}[\vec \lambda_0]) ^\frown \dots ^\frown T^{j_l}(y_{n_l}[\vec \lambda_l])$$ for some $n_0 < \dots < n_l < \omega, j_0, \dots, j_l \leq k$ and $\vec \lambda_i \in L_{n_i}^k \cup \{\vec v\}$ such that $j_i = k$ and $\vec \lambda_i = \vec v$ for some $i \leq l$, i.e. $x_m \in [y_{n_0}, \dots, y_{n_l}]_{Lv_k}$. By definition of $Y$, $$[y_{n_0}, \dots, y_{n_l}]_{Lv_k} \subseteq A_{n_0} \subseteq U_t$$ for each $t \in U$ such that $t \leq (y_0, \dots, y_{n-1})$. Since $X$ is a block subsequence of $Y$, we have $\max \supp_Y(x_{m-1}) < \min \supp_Y(x_m)$ and so $s \leq (y_0, \dots, y_{n-1})$. Thus $x_m \in U_s$ and so $(s, x_m) \in U$, as required.
\end{proof}

We are now in a position to prove the main theorem of this section, which allows us to parametrize the Milliken-Todorcevic theorem by a sequence of perfect subsets of $2^\omega$. Parts of the proof are similar to that of \cite[Theorem 5.45]{T}, but we include the details for the sake of completeness.

\begin{proof}[Proof of Theorem \ref{parafin}] Fix a finite Souslin measurable colouring $c$ of the product $\fin_k^{[\infty]} \times (2^\omega)^\omega$. Let $$L_n = \{ \sigma \in 2^\omega : (\forall i > n) \, \sigma(i) = 0\}$$ and $L = \bigcup_{n<\omega} L_n$. Define a mapping $\varphi : W^{[\infty]}_{Lv_k} \rightarrow \fin_k^{[\infty]}$ as follows: Given $(x_m)_{m<\omega} \in W^{[\infty]}_{Lv_k}$, let $\varphi((x_m)) = (a_m)$, where $a_m \in \fin_k$ consists of all ordered pairs of the form $$\langle |x_0| + \dots + |x_{m-1}| + l, i \rangle$$ where $v_i$ occupies the $l^{\mathrm{th}}$ place in $x_m$, and where $a_m$ takes the value 0 at all other points of $\omega$. We also define a mapping $\psi : W_{Lv_k}^{[\infty]} \rightarrow 2^{\omega \times \omega}$ by $$\psi((x_m))(n,i) = \sigma(i)$$ if $\sigma \in L$ occupies the $n^{\mathrm{th}}$ place in the infinite variable word $${x_0}^\frown {x_1} ^\frown {x_2} ^\frown \dots$$ and where $\psi((x_m))(n,i) = 0$ if a variable occupies the $n^{\mathrm{th}}$ place in the above infinite word.

Define a colouring $c^*$ of $W_{Lv_k}^{[\infty]}$ by setting $$c^*((x_m)) = c\left( \varphi((x_m)), \psi((x_m))\right)$$ where $(2^\omega)^\omega$ and $2^{\omega \times \omega}$ are identified via the mapping $$(\varep_{n,i})_n)_i \mapsto (\varep_{n,i})_{(n,i)}.$$ It is easy to check that $\varphi$ and $\psi$ are both continuous, from which it follows that $c^*$ is Souslin measurable. Apply Theorem \ref{infGHJ} to find $Y = (y_m) \in W_{Lv_k}^{[\infty]}$ such that $[Y]_{Lv_k}^{[\infty]}$ is monochromatic for $c^*$. Using $Y$, we define a block sequence $B = (b_m) \in \fin_k^{[\infty]}$ where $b_m$ consists of all ordered pairs of the form $$\langle |y_0| + \dots + |y_{2m}| + l, i \rangle$$ where the $l^{\mathrm{th}}$ place of $y_{2m+1}$ is occupied by $v_i$, and where $b_m$ takes the value 0 at all other points of $\omega$. Let $P$ be the collection of all doubly-indexed sequences $(\varep_{n,i})$ such that $$(\varep_{n,i}) = \psi((y_{2m}[\sigma_{2m}]^\frown y_{2m+1}))$$ for some sequence of letters $(\sigma_{2m}) \in \prod_{m < \omega} L_{2m}$. Note that $P$ is contained in the image of $[Y]_{Lv_k}^{[\infty]}$ under $\psi$.

The proof of Theorem \ref{parafin} will be complete once we prove the following two claims:

\begin{claim}\label{claim}
There is an infinite sequence $(P_i)$ of perfect subsets of $2^\omega$ such that $\prod_{i<\omega} P_i \subseteq P$.
\end{claim}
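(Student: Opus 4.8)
The plan is to realise each $P_i$ as a homeomorphic copy of $2^\omega$ sitting inside $2^\omega$. For each $i<\omega$ I will construct a continuous injection $f_i\colon 2^\omega\to 2^\omega$ and set $P_i:=f_i[2^\omega]$; since $2^\omega$ is compact and Hausdorff, $f_i$ is then a homeomorphism onto its image, so each $P_i$ is a non-empty, compact, perfect subset of $2^\omega$. The entire difficulty is to design the $f_i$ so that the $i$-th coordinate of $\psi\big(y_0[\sigma_0]^\frown y_1^\frown y_2[\sigma_2]^\frown\dots\big)$ depends on the sequence $(\sigma_{2m})_{m<\omega}$ only through a prescribed infinite set of indices, with these index sets pairwise disjoint across $i$, so that independent choices in the various $P_i$ can be realised by one common sequence $(\sigma_{2m})$.

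Concretely, fix pairwise disjoint infinite sets $M_i\subseteq\{m<\omega:2m\ge i\}$ for $i<\omega$, and enumerate $M_i=\{m_i^0<m_i^1<\dots\}$. For $i<\omega$ let $e_i\in 2^\omega$ be the characteristic function of $\{i\}$; then $e_i\in L_{2m}$ whenever $m\in M_i$. Since substituting single letters for variables does not change the length of a word, the positions that each block $y_l$ occupies inside the infinite word $y_0[\sigma_0]^\frown y_1^\frown y_2[\sigma_2]^\frown\dots$ are independent of the choice of $(\sigma_{2m})$. Reading off the definition of $\psi$: at a position carrying an unchanged letter $\mu\in L$, the value of $\psi$ in coordinate $i$ is the fixed bit $\mu(i)$; at a variable slot of an odd block $y_{2m+1}$ it is $0$; and at a variable slot of $y_{2m}$ it is $\sigma_{2m}(i)$. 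Consequently, if one also demands that $\sigma_{2m}(i)=0$ for every $m\notin M_i$ (which will be arranged in the last step), the $i$-th coordinate $n\mapsto\psi(\dots)(n,i)$ is a fixed background element $\beta_i\in 2^\omega$ (depending only on $Y$ and $i$) overwritten, for each $j<\omega$, by the bit $\sigma_{2m_i^j}(i)$ at the variable slots of $y_{2m_i^j}$. Define $f_i(\alpha)$, for $\alpha\in 2^\omega$, to be $\beta_i$ with $\alpha(j)$ written into the variable slots of $y_{2m_i^j}$ for every $j<\omega$. Each output bit of $f_i$ depends on at most one input bit, so $f_i$ is continuous; and since $y_{2m_i^j}$ is a $v_k$-variable word it contains at least one variable slot, from which $\alpha(j)$ can be recovered, so $f_i$ is injective. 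Hence $P_i:=f_i[2^\omega]$ is a non-empty perfect subset of $2^\omega$.

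To conclude that $\prod_{i<\omega}P_i\subseteq P$, take $(\rho_i)_{i<\omega}\in\prod_{i<\omega}P_i$ and write $\rho_i=f_i(\alpha_i)$ for some $\alpha_i\in 2^\omega$. Define $(\sigma_{2m})_{m<\omega}\in\prod_{m<\omega}L_{2m}$ by declaring $\sigma_{2m}:=e_i$ if $m=m_i^j\in M_i$ and $\alpha_i(j)=1$, $\sigma_{2m}:=$ the zero letter if $m=m_i^j\in M_i$ and $\alpha_i(j)=0$, and $\sigma_{2m}:=$ the zero letter if $m\notin\bigcup_i M_i$. Since $e_{i'}(i)=0$ for $i'\ne i$, the requirement $\sigma_{2m}(i)=0$ for all $m\notin M_i$ holds for every $i$, so the analysis above applies: the $i$-th coordinate of $\psi\big((y_{2m}[\sigma_{2m}]^\frown y_{2m+1})_{m<\omega}\big)$ equals $\beta_i$ overwritten by $\alpha_i(j)$ at the variable slots of $y_{2m_i^j}$, that is, equals $f_i(\alpha_i)=\rho_i$. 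Hence $(\rho_i)_{i<\omega}=\psi\big((y_{2m}[\sigma_{2m}]^\frown y_{2m+1})_{m<\omega}\big)\in P$, as required.

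The one point requiring real care is exactly this decoupling of the coordinates: one must verify that, once the letters $\sigma_{2m}$ ($m\in M_i$) are restricted to $\{0,e_i\}$, coordinate $i$ of $\psi$ becomes a function of $\alpha_i$ alone, with every other block contributing the constant bit $0$ to that coordinate, so that the product of the resulting perfect sets genuinely sits inside $P$. Once this bookkeeping is in place, the topological claims about $f_i$ — continuity, injectivity, hence perfectness of the image — follow from the standard fact that a continuous bijection from a compact space onto a Hausdorff space is a homeomorphism.
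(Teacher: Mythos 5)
Your proposal is correct, and it proves the claim by a route that differs from the paper's in one essential piece of bookkeeping, namely how the coordinates $i$ are decoupled. Both arguments rest on the same analysis of $\psi$: in coordinate $i$, an element of $P$ is a fixed background (the bits $y(n)(i)$ at letter slots, $0$ at variable slots of the odd blocks) together with a freely choosable bit, constant on the variable slots of each even block $y_{2m}$, subject to the letter constraint $\sigma_{2m}\in L_{2m}$. The paper lets every even block serve all coordinates simultaneously: its $P_i$ is cut out by explicit constraints (agreement with $y(\cdot)(i)$ at letter slots, vanishing on an initial segment, constancy on the variable slots of each interval $I_{2m-1}$), and the witnessing letters are recovered by setting $\sigma_{2m}(i):=\delta_i(n_m)$, exploiting the fact that a letter of $L_{2m}$ has $2m+1$ unconstrained coordinates. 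You instead dedicate pairwise disjoint infinite sets $M_i$ of even blocks to the $i$-th coordinate and use only the two letters $\vec 0$ and $e_i$ on those blocks, so each substituted letter carries a single bit for a single coordinate; independence across coordinates is then immediate from disjointness, at the price of producing smaller perfect sets than the paper's. Your version has two pleasant features: perfectness of $P_i$ comes for free, since $P_i$ is exhibited as a homeomorphic copy of $2^\omega$ rather than verified by locating unconstrained places, and your $P_i$ vanishes by construction at the variable slots of every odd block and of every non-dedicated even block, so each element of $P_i$ is literally a coordinate section of an element of $P$; in the paper this vanishing is imposed only on the first $2i$ blocks by its condition (2) and the rest is absorbed into the concluding routine check, so your bookkeeping is, if anything, the more explicit of the two.
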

\begin{proof}
Let $y$ denote the infinite variable word $$y_0 \, ^\frown y_1 \, ^\frown y_2 \, ^\frown \dots$$ and, for each $m > 0$, let $I_{2m-1}$ be the interval $$[|y_0| + \dots + |y_{2m-1}|, |y_0| + \dots + |y_{2m-1}| + |y_{2m}|).$$ For each $i < \omega$, let $P_i$ be the set of all $\delta \in 2^\omega$ satisfying the following conditions:
	\begin{enumerate}[(1)]
	\item If $y(n) \in L$, then $\delta(n) = y(n)(i)$.
	\item If $n < |y_0| + \dots + |y_{2i-1}|$ and $y(n)$ is a variable, then $\delta(n) = 0$.
	\item $\delta(n) = \delta(n')$ for all $n, n' \in I_{2m-1}$ such that $y(n)$ and $y(n')$ are variables.
	\end{enumerate}
Since $P_i$ has no restrictions at the minimal place of each interval $I_{2m-1}$ where a variable occurs, it follows that $P_i$ is perfect. To show the required inclusion of sets, let $(\delta_i) \in \prod_{i<\omega} P_i$ and let $(\varep_{n,i})$ be the doubly-indexed sequence such that $\varep_{n,i} = \delta_i(n)$. For each $m$, let $n_m$ be the least place in the interval $I_{2m-1}$ where a variable occurs in $y$. Then for each $m$ choose $\sigma_{2m} \in L_{2m}$ such that $\sigma_{2m}(i) = \delta_i(n_m)$ for each $i < \omega$. Then it is routine to check that the sequence $(\sigma_{2m})$ witnesses the fact that $(\delta_i) \in P$. This proves the claim.
\end{proof}

\begin{claim}
$[B]_k^{[\infty]} \times P \subseteq (\varphi \times \psi)[Y]_{Lv_k}^{[\infty]}$.
\end{claim}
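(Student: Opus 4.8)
The plan is to prove the claim by an explicit construction: given $A=(a_j)_{j<\omega}\in[B]_k^{[\infty]}$ and $(\varepsilon_{n,i})\in P$, I would build a single block subsequence $X=(x_j)_{j<\omega}\in[Y]_{Lv_k}^{[\infty]}$ with $\varphi(X)=A$ and $\psi(X)=(\varepsilon_{n,i})$. Since $(\varepsilon_{n,i})\in P$, first fix a sequence of letters $(\sigma_{2m})\in\prod_{m<\omega}L_{2m}$ with $(\varepsilon_{n,i})=\psi\big((y_{2m}[\sigma_{2m}]^\frown y_{2m+1})_{m<\omega}\big)$, and abbreviate $z_m:=y_{2m}[\sigma_{2m}]^\frown y_{2m+1}$; one checks easily that $(z_m)$ is rapidly increasing and that $(z_m)\in[Y]_{Lv_k}^{[\infty]}$. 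Since $A\in[B]_k^{[\infty]}$, write $a_j=\sum_{q\in F_j}T^{l_{j,q}}(b_q)$ for finite sets $F_j\subseteq\omega$ with $\min_{q\in F_j}l_{j,q}=0$ and $\max F_j<\min F_{j+1}$, so that in particular $\max F_j\to\infty$.

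The block sequence $X$ is then defined by letting $x_j$ be the concatenation, over $q\in(\max F_{j-1},\max F_j]$ listed in increasing order (with the convention $\max F_{-1}:=-1$), of the word $T^{l_{j,q}}(z_q)$ when $q\in F_j$ and of the word $T^k(z_q)$ when $q\notin F_j$. The point of applying $T^k$ to a ``gap'' word is that, since each $z_q$ is a $v_k$-variable word, $T^k(z_q)$ is variable-free, so the gap indices contribute nothing to $\varphi$ while still being present for $\psi$; and the point of never substituting an actual letter into the $z_q$'s (the relevant tuple is always $\vec v$) is that this keeps $\psi$ from changing.

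I would then carry out three verifications. (i) $X\in[Y]_{Lv_k}^{[\infty]}$: expanding $z_q=y_{2q}[\sigma_{2q}]^\frown y_{2q+1}$ and using that tetris commutes with concatenation and fixes variable-free words exhibits each $x_j$ as a legitimate element of $[Y]_{Lv_k}$, and it is a genuine $v_k$-variable word because some $q\in F_j$ has $l_{j,q}=0$; the intervals $(\max F_{j-1},\max F_j]$ partition $\omega$ and have increasing $Y$-supports, giving the rapidly-increasing and block-subsequence conditions. (ii) $\varphi(X)=A$: this is a position count — the coordinate occupied by the $v_i$ sitting at the $l$-th place of $y_{2q+1}$ (for $q\in F_j$), measured inside ${x_0}^\frown{x_1}^\frown\cdots$ from the start of $x_j$, works out to exactly $|y_0|+\cdots+|y_{2q}|+l$, which is precisely the coordinate that $b_q$, hence $T^{l_{j,q}}(b_q)$, hence $a_j$ records; and tetris commutes with the passage from $v_k$-variable words to elements of $\fin_k$, so the values match as well. (iii) $\psi(X)=(\varepsilon_{n,i})$: because $X$ uses every $z_m$ in consecutive blocks and tetris preserves word length, the $n$-th coordinate of ${x_0}^\frown{x_1}^\frown\cdots$ lies over the $n$-th coordinate of ${z_0}^\frown{z_1}^\frown\cdots$; at such a coordinate tetris either fixes a letter, sends a variable to a variable, or sends $v_1$ to the distinguished letter $0$, and in each case $\psi$ assigns the same value it assigns to $(z_m)$ (the last case because $\psi$ reads a variable as $0$ and $0\in L_0$ is the all-zero sequence), whence $\psi(X)=\psi((z_m))=(\varepsilon_{n,i})$.

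The main obstacle is arranging the blocking of $X$ so that two competing demands are met at once: $\psi$ forces $X$ to use \emph{every} $z_m$, since $\psi$ is indexed by positions in the concatenation rather than by which words appear, whereas $\varphi$ must recover the exact support and tetris pattern of $A$; thus the gap indices lying between, and before, the sets $F_j$ must be absorbed into the $x_j$'s without introducing new variables or altering the letter data, which is exactly what the $T^k$ and $\vec v$ choices above accomplish. Once this blocking is fixed, the three verifications are bookkeeping.
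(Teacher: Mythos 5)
Your construction is correct and takes essentially the same route as the paper's proof: form the words $y_{2m}[\sigma_{2m}]^\frown y_{2m+1}$ from the witnessing letters, group them over consecutive intervals of indices determined by the $B$-decomposition of the $a_j$'s, apply the matching tetris powers at support indices, and neutralize the variables at gap indices in a way that leaves $\psi$ unchanged while contributing nothing to $\varphi$. The only (immaterial) difference is that you kill gap variables by applying $T^k$, whereas the paper substitutes the zero letter $\vec 0$ into them; both devices preserve $\psi$ because a variable position and the all-zero letter are read identically.
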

\begin{proof}
Let $(A, (\varep_{n,i})) \in [B]_k^{[\infty]} \times P$. By definition of $P$, there is a sequence $(\sigma_{2m}) \in \prod_{m <\omega} L_{2m}$ such that $$(\varep_{n,i}) = \psi((y_{2m}[\sigma_{2m}]^\frown y_{2m+1})).$$ If we let $X = (x_m)_{m<\omega}$ be given by $x_m = y_{2m}[\sigma_m]^\frown y_{2m+1}$ then $\varphi(X) = B$. For each $l < \omega$, let $I_l$ be the smallest interval of integers such that $$a_l = \sum_{i \in I_l} T^{j_i}(b_i)$$ for some integers $j_i \leq k$, and note that the sequence $(I_l)_{l < \omega}$ is a block sequence. Fix $l < \omega$ and let $\{p, p+1, \dots, p+q\}$ be an enumeration of the interval $$(\max(I_{l-1}), \max(I_l)]$$ where we set $\max(I_{-1}) = -1$ for convenience. Then let $$z_l = T^{r_0}(x_p[\vec \lambda_0])^\frown \dots ^\frown T^{r_q}(x_{p+q}[\vec \lambda_q])$$ where the parameters are determined as follows:
	\begin{enumerate}[(i)]
	\item If $p+i \not \in I_l$, then let $\vec \lambda_i$ be the $k$-tuple $(\vec 0, \dots, \vec 0)$ where $\vec 0 \in 2^\omega$ is the sequence which is constantly 0. In this case, let $r_i = 0$.
	\item If $p+i \in I_l$, then let $\vec \lambda_i = \vec v$ and $r_i = j_{p+i}$.
	\end{enumerate}
Then $Z = (z_l)_{l <\omega}$ is a block subsequence of $X$ and hence of $Y$. By construction, $\varphi(Z) = A$. Finally, note that $\psi(Z) = (\varep_{n,i})$ since the infinite word $$z_0 \, ^\frown z_1 \ ^\frown z_2 \, ^\frown \dots$$ is obtained from the infinite word $$y_0[\sigma_0]^\frown y_1 \, ^\frown \dots ^\frown y_{2m}[\sigma_{2m}]^\frown y_{2m+1}^\frown \dots$$ by replacing some occurrences of a variable with the constant sequence $\vec 0 \in L$. In particular, this shows $\psi(Z) = \psi((y_{2m}[\sigma_{2m}]^\frown y_{2m+1}))$. Thus $$(A, (\varep_{n,i})) = (\varphi(Z), \psi(Z))$$ as required.
\end{proof}

This finishes the proofs of the two claims, and hence the proof of the theorem is complete.
\end{proof}

\section{A parametrized $\fin_{\pm k}^{[\infty]}$ theorem}

In this section we prove the following approximate Ramsey theorem, which parametrizes the infinite-dimensional version of Gowers' $\fin_{\pm k}$ theorem from \cite{JK}. First, given two infinite block sequences $A = (a_n)$ and $B = (b_n)$ in $\fin_{\pm k}$, let $$|| A - B || = \sup_{n<\omega} ||a_n - b_n||_\infty.$$

\begin{thm}[Parametrized $\fin_{\pm k}^{[\infty]}$ Theorem]\label{parafin1}
For every finite Souslin measurable colouring $c : \fin_{\pm k}^{[\infty]} \times (2^\omega)^\omega \rightarrow n$, there are $B \in \fin_{\pm k}^{[\infty]}$, a sequence $(P_i)_{i<\omega}$ of non-empty perfect subsets of $2^\omega$, and $j < n$ such that the following holds: For every $(A, (p_i)) \in [B]_{\pm k}^{[\infty]} \times \prod_{i<\omega}^\infty P_i$ there is $\widetilde{A} \in \fin_{\pm k}^{[\infty]}$ such that $$c(\widetilde{A}, (p_i)) = j \,\, \text{ and } \,\, ||A - \widetilde{A}|| \leq 1.$$
\end{thm}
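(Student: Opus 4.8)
The plan is to follow the proof of Theorem~\ref{parafin} almost verbatim, substituting the exact infinitary Gowers--Hales--Jewett theorem (Theorem~\ref{infGHJ}) with the approximate Gowers--Hales--Jewett theorem established earlier in this section. First I would reuse the coding apparatus from Section~3: take $L_n = \{\sigma \in 2^\omega : (\forall i > n)\, \sigma(i) = 0\}$, $L = \bigcup_{n<\omega} L_n$, fix the distinguished letter $0 \in L_0$ and variables $v_1, \dots, v_k$, but now work with the \emph{signed} variant of $W_{Lv_k}$ appropriate to $\fin_{\pm k}$ (so that occurrences of variables carry a sign). Define $\varphi : W_{Lv_k}^{[\infty]} \rightarrow \fin_{\pm k}^{[\infty]}$ by reading off the signed variable pattern of each word (a signed $v_i$ in the $l^{\mathrm{th}}$ place of $x_m$ contributes $\pm i$ at the corresponding coordinate), and define $\psi : W_{Lv_k}^{[\infty]} \rightarrow 2^{\omega \times \omega}$ by reading off the coordinates of the letters of $L$ occurring in the concatenated infinite word, exactly as in the proof of Theorem~\ref{parafin}. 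As there, $\varphi$ and $\psi$ are continuous, so the pullback $c^\ast((x_m)) = c(\varphi((x_m)), \psi((x_m)))$ of the given colouring is Souslin measurable.

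Next I would apply the approximate Gowers--Hales--Jewett theorem of this section to $c^\ast$, obtaining $Y = (y_m) \in W_{Lv_k}^{[\infty]}$ and a colour $j < n$ such that $[Y]_{Lv_k}^{[\infty]}$ is approximately monochromatic for $c^\ast$ in colour $j$, relative to the word metric $\partial$: for every $Z \in [Y]_{Lv_k}^{[\infty]}$ there is $\widetilde{Z} \in W_{Lv_k}^{[\infty]}$ with $c^\ast(\widetilde{Z}) = j$ and $\partial(Z, \widetilde{Z}) \leq 1$. The crucial point I would verify is that $\partial$ measures only the variable/sign data of a word and never the letters drawn from $L$; consequently a $\partial$-perturbation within distance $1$ leaves $\psi$ unchanged, i.e. $\psi(\widetilde{Z}) = \psi(Z)$, while $\varphi(\widetilde{Z})$ and $\varphi(Z)$ differ by at most $1$ in each coordinate, so $\|\varphi(Z) - \varphi(\widetilde{Z})\| \leq 1$.

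From $Y$ I would then define $B = (b_m) \in \fin_{\pm k}^{[\infty]}$ from the odd-indexed words $y_{2m+1}$, and let $P \subseteq 2^{\omega \times \omega}$ be the set of all $\psi\big((y_{2m}[\sigma_{2m}]^\frown y_{2m+1})\big)$ for $(\sigma_{2m}) \in \prod_{m<\omega} L_{2m}$, exactly as in the proof of Theorem~\ref{parafin}. The two combinatorial claims there transfer verbatim, since they concern only the letters of $L$ and are insensitive to signs: there is a sequence $(P_i)_{i<\omega}$ of non-empty perfect subsets of $2^\omega$ with $\prod_{i<\omega} P_i \subseteq P$, and for every $(A, (p_i)) \in [B]_{\pm k}^{[\infty]} \times P$ there is a block subsequence $Z$ of $Y$ with $\varphi(Z) = A$ and $\psi(Z) = (p_i)$. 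Feeding such a $Z$ into the approximate monochromaticity of $[Y]_{Lv_k}^{[\infty]}$ produces $\widetilde{Z}$ with $c(\varphi(\widetilde{Z}), \psi(\widetilde{Z})) = c^\ast(\widetilde{Z}) = j$; by the previous paragraph $\psi(\widetilde{Z}) = (p_i)$, and $\widetilde{A} := \varphi(\widetilde{Z}) \in \fin_{\pm k}^{[\infty]}$ satisfies $\|A - \widetilde{A}\| \leq 1$. Restricting $(P_i)$ appropriately, this is precisely the desired conclusion with this $B$, $(P_i)_{i<\omega}$ and $j$.

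I expect the main obstacle not to lie in the parametrization bookkeeping, which is mechanical once Section~3 is in place, but in correctly aligning the metric $\partial$ on multi-variable words with the metric $\|\cdot\|$ on $\fin_{\pm k}^{[\infty]}$ under $\varphi$, and in confirming that the approximating word $\widetilde{Z}$ furnished by the approximate Gowers--Hales--Jewett theorem can be taken with $\psi(\widetilde{Z}) = \psi(Z)$. In other words, the approximate theorem must be arranged so that its perturbations are confined to the variable and sign coordinates and leave the $L$-letters (hence the $2^\omega$-parameters) untouched; if it were stated so as to allow perturbation of letters in $L$, one would have to strengthen it accordingly. This compatibility is exactly the $\fin_{\pm k}$ versus $\fin_k$ distinction, and isolating it cleanly for the word metric is the delicate part of the argument.
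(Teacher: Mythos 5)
Your proposal is correct and follows essentially the same route as the paper: pull back $c$ via the signed coding maps $\varphi, \psi$ on $W_{Lv_{\pm k}}^{[\infty]}$, apply the approximate Gowers--Hales--Jewett theorem (Theorem~\ref{infapproxGHJ}) to get an approximately monochromatic $[Y]_{Lv_{\pm k}}^{[\infty]}$, transfer the two claims from the proof of Theorem~\ref{parafin}, and observe that a $d$-perturbation of size $\leq 1$ forces compatibility, hence leaves $\psi$ fixed while moving $\varphi$ by at most $1$ in each coordinate. The compatibility point you flag as the delicate step is exactly how the paper closes the argument, since the metric $d$ assigns $\infty$ to incompatible words and so cannot touch the $L$-letters.
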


To prove this result, we will need to develop an infinite-dimensional version of the Gowers-Hales-Jewett theorem which can code information about $\fin_{\pm k}$. As before, fix an infinite alphabet $L = \bigcup_{n<\omega} L_n$ given as an increasing union of finite subalphabets $L_n$, as well as a distinguished letter $0 \in L_0$ together with variables $$v_1, v_{-1}, v_2, v_{-2}, \dots, v_k, v_{-k} \not \in L.$$ $W_L$ will denote the set of all variable-free words over $L$ and, for each $i \in \{1, \dots, k\}$, $W_{Lv_{\pm i}}$ will denote the set of all variable words $x$ over $L$ such that $$ i = \max\{j \leq k : \text{ $v_j$ or $v_{-j}$ appears in $x$}\}.$$ Let $$S = W_L \cup \bigcup_{1\leq i \leq k} W_{Lv_{\pm i}}$$ and work in the semigroup $(S, \, ^\frown)$. Given $x \in W_{Lv_{\pm k}}$ and a $2k$-tuple $$\vec\lambda = (\lambda_{-k}, \dots, \lambda_{-1}, \lambda_1, \dots, \lambda_k) \in L^{2k} \cup \{\vec v \},$$ let $x[\vec\lambda]$ be the word obtained by replacing each occurrence of $v_j$ with $\lambda_j$ for each $j \in \{\pm 1, \dots, \pm k\}$, where $$\vec v = (v_{-k}, \dots, v_{-1}, v_1, \dots, v_k).$$ The tetris operation $T : W_{Lv_{\pm k}} \rightarrow W_{Lv_{\pm (k-1)}}$ is defined as follows: Given $x \in W_{Lv_{\pm k}}$, define $T(x) \in W_{Lv_{\pm (k-1)}}$ by
\[ T(x)(n) := \begin{cases}
	v_{i-1} & \text{ if $x(n) = v_i$ for $i>1$},\\
	v_{i+1} & \text{ if $x(n) = v_i$ for $i < -1$},\\
	0 & \text{ if $x(n) \in \{v_1, v_{-1}\}$},\\
	x(n) & \text{ if $x(n) \in L$}.
	\end{cases}
\]
As before, set $T(w) = w$ for each $w \in W_L$. In this setting we also have a notion of reflection: Given $x \in S$, let $-x$ be the word obtained by replacing each occurrence of a variable $v_i$ with $v_{-i}$ for each $i \in \{\pm 1, \dots, \pm k\}$. Note that the mapping $x \mapsto -x$ is a semigroup homomorphism which is equal to the identity when restricted to $W_L$.

Given a sequence $X = (x_n)_{n < \omega}$ in $W_{Lv_{\pm k}}$, the \emph{partial subsemigroup of $W_{Lv_{\pm k}}$ generated by $X$}, denoted $[X]_{Lv_{\pm k}}$, is defined to be the set of all elements of $W_{Lv_{\pm k}}$ which are of the form $$\varep_0 T^{j_0}(x_{n_0}[\vec\lambda_0])^\frown \dots ^\frown \varep_l T^{j_l}(x_{n_l}[\vec\lambda_l])$$ where $l < \omega$, $n_0 < \dots < n_l, j_i \leq k, \varep_i \in \{\pm 1\}$ and $\vec\lambda_i \in L_{n_i}^{2k} \cup \{ \vec v\}$ for each $i \leq k$.

Let $W_{Lv_{\pm k}}^{[\infty]}$ denote the set of all \emph{rapidly increasing} sequences in $W_{Lv_{\pm k}}$, defined as in the previous section and equipped with its natural metrizable topology. Exactly as before, the notion of rapidly increasing allows us to uniquely define the \emph{support} of a word $x \in [X]_{Lv_{\pm k}}$ relative to some rapidly increasing sequence $X$. Given $X = (x_n)_{n<\omega}$ and $Y \in W_{Lv_{\pm k}}^{[\infty]}$, write $X \leq Y$ if and only if $x_n \in [Y]_{Lv_{\pm k}}$ for all $n < \omega$ and $$\max \supp_Y(x_n) < \min \supp_Y(x_m) \text{ whenever $n < m$}.$$ As before, when this happens we say that $X$ is a \emph{block subsequence} of $Y$ and we write $[Y]_{Lv_{\pm k}}^{[\infty]}$ for the set of all infinite block subsequences of $Y$. As is the case for $\fin_{\pm k}$, we cannot expect to obtain an exact Ramsey theorem in this setting; rather, we will only be able to prove an \emph{approximate} version of such a theorem which will make use of a suitable metric. First, we need the following:

\begin{defi}
For a word $x \in W_{Lv_{\pm k}}$, define $$L(x) = \{n < |x| : x(n) \in L \setminus \{0\}\}.$$ Two words $x, y \in W_{Lv_{\pm k}}$ are \emph{compatible} if:
	\begin{enumerate}[(i)]
	\item $|x| = |y|$.
	\item $L(x) = L(y)$ and $x(n) = y(n)$ for all $n \in L(x)$.
	\end{enumerate}
\end{defi}

Note that compatibility is a transitive relation on the set of pairs of words. Now, define a metric on the set $\{v_{\pm 1}, \dots, v_{\pm k}\} \cup \{0\}$ by setting $d(v_i, v_j) = |i - j|$ for variables $v_i$ and $v_j$, and $d(v_i, 0) = |i|$. Using this, define a metric $d$ on $W_{Lv_{\pm k}}$ taking values in $\mathbb{R} \cup \{\infty\}$ by
\[ d(x, y) = \begin{cases}
	 \sup\{d(x(i), y(i)) : i \in |x| \setminus L(x)\} & \text{ if $x$ and $y$ are compatible}, \\
	 \infty & \text{ otherwise}.
	 \end{cases}
\]
We then extend this to a metric on $W_{Lv_{\pm k}}^{[\infty]}$, also denoted $d$, by setting $$d((x_n),(y_n)) = \sup_{n<\omega} d(x_n, y_n).$$ For $\varep > 0$, $A \subseteq W_{Lv_{\pm k}}$ and $\X \subseteq W_{Lv_{\pm k}}^{[\infty]}$, let $$(A)_\varep = \{x \in W_{Lv_{\pm k}} : (\exists y \in A) \, d(x,y) \leq \varep\},$$ $$(\X)_\varep = \{X \in W_{Lv_{\pm k}}^{[\infty]} : (\exists Y \in \X) \, d(X,Y) \leq \varep\}.$$

\begin{thm}\label{infapproxGHJ}
For every $k, r \in \N$ and every Souslin measurable $c : W_{Lv_{\pm k}}^{[\infty]} \rightarrow r$ there are $i < r$ and an infinite block sequence $X \in W_{Lv_{\pm k}}^{[\infty]}$ such that $$[X]_{\pm k}^{[\infty]} \subseteq \left(c^{-1}\{i\}\right)_1.$$
\end{thm}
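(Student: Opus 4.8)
The plan is to mimic the proof of Theorem~\ref{infGHJ}, working with ultrafilters on the larger semigroup $S = W_L \cup \bigcup_{1 \leq i \leq k} W_{Lv_{\pm i}}$, but using the metric $d$ to absorb the inevitable loss of precision coming from the reflection homomorphism $x \mapsto -x$. First I would construct, exactly as in Section~3 but now in the $\pm$-setting, a minimal idempotent $\W \in S^*_L$ together with a sequence $(\V_i)_{1 \leq i \leq k}$ of idempotents with $\V_i \in S^*_{Lv_{\pm i}}$, $\V_i \geq \V_j$, and $T^{(j-i)}(\V_j) = \V_i$ for $i < j$, and I would verify the analogue of Claim~\ref{claim1}, namely $\V_i[\vec\lambda] = \W$ for every $\vec\lambda \in L^{2k}$. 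The new ingredient is reflection invariance: since $x \mapsto -x$ restricts to the identity on $W_L$, the ultrafilter $-\W$ is again a minimal idempotent of $S^*_L$, and I would arrange (by replacing $\W$ with $\W \,^\frown (-\W)$ if necessary, or by an analogous symmetrization at each stage) that each $\V_i$ satisfies $-\V_i = \V_i$, so that $\varep$-signs may be inserted freely without leaving the relevant ultrafilters. This is what will let a colour class be hit up to $d$-distance $1$ rather than exactly.

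With these ultrafilters in hand, I would set up the combinatorial machinery verbatim: $\V_k$-trees on the tree $W_{Lv_{\pm k}}^{[<\infty]}$ of finite rapidly increasing sequences, the notions of $\V_k$-open and $\V_k$-Ramsey, the ultra-Ellentuck theorem, and its corollary that a Souslin measurable $c : W_{Lv_{\pm k}}^{[\infty]} \to r$ admits a $\V_k$-tree $U$ with stem $\emptyset$ and $[U] \subseteq c^{-1}\{i\}$ for some $i < r$; all of this transfers because the proofs in \cite{T} only use that $\V_k$ is a cofinite idempotent. Then I would prove the analogue of Lemma~\ref{lem1}: for every $\V_k$-tree $U$ with stem $\emptyset$ there is a rapidly increasing $Y = (y_n)$ and a decreasing sequence $(A_n)$ of subsets of $W_{Lv_{\pm k}}$ with $A_n \in \V_k$, $A_n \subseteq U_t$ for every $t \in U$ with $t \leq (y_0, \dots, y_{n-1})$, and $[y_m, \dots, y_n]_{Lv_{\pm k}} \subseteq A_m$ for all $m \leq n$. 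The induction is the same, using $\V_k = T^j(\V_k)\,^\frown \V_k = \V_k\,^\frown T^j(\V_k)$ for $j \leq k$ together with cofiniteness; the extra $\varep_i \in \{\pm 1\}$ factors in $[y_m, \dots, y_n]_{Lv_{\pm k}}$ are handled precisely by the reflection invariance $-\V_k = \V_k$ arranged above, which guarantees $\{z : \varep\,^\frown z \in A\} \in \V_k$ whenever $A \in \V_k$.

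Finally I would combine these: given $c$, get $U$ with $[U] \subseteq c^{-1}\{i\}$, get $Y$ from the lemma, and argue — as in the proof of Theorem~\ref{infGHJ} — that $[Y]_{Lv_{\pm k}}^{[\infty]} \subseteq [U] \subseteq c^{-1}\{i\}$; this already gives $[Y]_{Lv_{\pm k}}^{[\infty]} \subseteq c^{-1}\{i\} \subseteq (c^{-1}\{i\})_1$, so in fact the only place the metric fattening genuinely enters is in the statement (the theorem is phrased with $(\cdot)_1$ to match the downstream application, but if the symmetrization of $\W$ goes through cleanly one gets the exact inclusion). The main obstacle I anticipate is exactly this symmetrization step: ensuring that minimality of $\W$ is compatible with $-\W = \W$, and then propagating reflection invariance through the recursive construction of the $\V_i$ (each step of which passes to the subsemigroup $S_i = \{\U : T(\U) = \V_{i-1}\}$ and then forms $\V_{i-1}\,^\frown \U\,^\frown \V_{i-1}$) so that $-\V_i = \V_i$ survives. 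If full reflection invariance cannot be arranged, the fallback — and this is presumably why the theorem is stated with $(c^{-1}\{i\})_1$ — is to only demand $d(T(-\V_i), \text{something}) $-type closeness, i.e. to track that applying a reflection moves a word by $d$-distance at most $1$ within its compatibility class (since flipping $v_j$ to $v_{-j}$ changes a coordinate value from $j$ to $-j$, a move of size $2|j|$... so one must instead insert signs only at the level where $T^j$ has already been applied with $j = k-1$, giving $v_{\pm 1} \mapsto 0$ up to error $1$), and then absorb the resulting slack into the $(\cdot)_1$-neighbourhood at the very end.
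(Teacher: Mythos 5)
There is a genuine gap, and it is located exactly where you flagged your own worry: the symmetrization step is not just delicate, it is impossible, and the exact conclusion you hope to extract from it is false. No ultrafilter on $W_{Lv_{\pm k}}$ can satisfy $-\V_k = \V_k$: the reflection $x \mapsto -x$ is a fixed-point-free involution of $W_{Lv_{\pm k}}$, so there is a partition $W_{Lv_{\pm k}} = A^+ \cup A^-$ with $-A^+ = A^-$ (concretely, let $A^+$ be the set of words whose leftmost occurrence of a variable from $\{v_k, v_{-k}\}$ is $v_k$); whichever piece lies in $\V_k$, its reflection does not. The same partition shows that your claimed endgame --- the exact inclusion $[Y]_{Lv_{\pm k}}^{[\infty]} \subseteq c^{-1}\{i\}$ --- cannot hold for any $Y$: colouring $X = (x_n)$ by the sign of the leftmost top variable of $x_0$ is clopen, yet every span $[Y]_{Lv_{\pm k}}^{[\infty]}$ contains sequences of both colours, since $x_0 \in [Y]_{Lv_{\pm k}}$ implies $-x_0 \in [Y]_{Lv_{\pm k}}$. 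This also breaks your Lemma~\ref{lem1}-analogue at the first step: you would need $[y_0]_{Lv_{\pm k}} \subseteq A_0 = U_\emptyset$, but $[y_0]_{Lv_{\pm k}}$ contains both $y_0$ and $-y_0$, while $U_\emptyset$ can be taken inside $A^+$. So the metric fattening is not a cosmetic feature of the statement; it is where all the content lies, and your proposal never supplies a mechanism that actually produces it.

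The paper handles the signs quite differently, and the differences are exactly the missing ingredients. First, the chain of idempotents is built with respect to the map $-T$ rather than $T$ (condition $(-T)^{(j-i)}(\V_j) = \V_i$), and instead of reflection invariance one proves only \emph{subsymmetry}: $A \in \V_k \Rightarrow -(A)_1 \in \V_k$, an inherently approximate statement obtained from $\V_k = (-T)(\V_k)^\frown \V_k = \V_k {}^\frown (-T)(\V_k)$ and the compatibility of $(-T)(x)^\frown y$ with $(-x)^\frown T(y)$. Second, the fusion lemma (Lemma~\ref{lem2}) is run for the sign-free $(-T)$-span $[y_m,\dots,y_n]_{(-T)}$, with the sets $A_n$ required to sit inside $U_t \cap -(U_t)_1$; this yields only $[Y]_{\pm k}^{[\infty]} \subseteq ([U])_2$ (Lemma~\ref{lem3}), because block subsequences whose top-variable occurrences all carry the sign $-1$ must be approximated through the reflected, fattened sets, costing distance $2$. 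Third, the loss of $2$ is reduced to the required $1$ by a doubling trick: the colouring is pulled back along the letter-preserving surjections $\Phi_k : W_{Lv_{\pm 2k}} \rightarrow W_{Lv_{\pm k}}$, which halve distances ($d(x,y) \leq 2 \Rightarrow d(\Phi_k(x),\Phi_k(y)) \leq 1$), the argument is carried out in $W_{Lv_{\pm 2k}}$, and the result is pushed forward. Your fallback paragraph gestures at tracking errors of size $1$ under reflection but contains neither the subsymmetry lemma nor the doubling step, so as written it does not yield the stated bound.
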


To prove Theorem \ref{infapproxGHJ}, we use ultra-Ramsey theory. First we will construct an ultrafilter which behaves well with respect to the mapping $$-T : W_{Lv_{\pm k}} \rightarrow W_{Lv_{\pm (k-1)}} : x \mapsto -T(x)$$ in a sense that we now make precise. Work in the closed subsemigroup $S^* \subseteq \beta S$ consisting of all non-principal cofinite ultrafilters on $S$, where \emph{cofinite} is defined as before. Define $$S^*_L = \{\U \in S^* : W_L \in \U\}$$ and, for each $i \in \{1,\dots, k\}$, $$S^*_{Lv_{\pm i}} = \{ \U \in S^* : W_{Lv_{\pm i}} \in \U \}.$$ Then $S^*_L$ and $S^*_{Lv_{\pm i}}$ (for each $1\leq i \leq k$) are non-empty closed subsemigroups of $S^*$. Let $\W$ be a minimal idempotent in $S^*_L$, and choose any idempotent $\V_1 \leq \W$ in $S^*_{Lv_1}$. Exactly as in the previous section, recursively construct a sequence $(\V_i)_{1 \leq i \leq k}$ of idempotents starting with $\V_1$ such that for each $i<j$:
\begin{enumerate}
	\item $\V_i$ is an idempotent in $S^*_{Lv_{\pm i}}$.
	\item $\V_i \geq \V_j$.
	\item $(-T)^{(j-i)}(\V_j) = \V_i$.
	\item For each $\vec\lambda \in L^k$, $\V_i[\vec\lambda] = \W$.
\end{enumerate}
In particular, note that (4) implies $(-T)(\V_1) = \W$ since $(-T)(x) = x[\vec 0]$ for each $x \in W_{Lv_{\pm 1}}$ and where $\vec 0 = (0, \dots, 0)$. In addition to the above properties, we will also need the following useful fact. First, given $A \subseteq W_{Lv_{\pm k}}$, let $-A$ be the set of all words of the form $-x$ for $x \in A$.

\begin{lem}
The ultrafilter $\V_k$ is \emph{subsymmetric}, i.e. $-(A)_1 \in \V_k$ whenever $A \in \V_k$.
\end{lem}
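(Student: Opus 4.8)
The plan is to extract subsymmetry from the relation $(-T)(\V_k)=\V_{k-1}$ built into the construction, combined with one elementary metric observation about the tetris operation: for every $x\in W_{Lv_{\pm k}}$ one has $d(x,T(x))\le 1$. Indeed $T$ moves the magnitude of each variable towards $0$ by exactly one (sending $v_i$ to $v_{i-1}$, $v_{-i}$ to $v_{-i+1}$, and $v_{\pm1}$ to $0$) and fixes letters of $L$, so $d(x(n),T(x)(n))\le 1$ coordinatewise. Since reflection is an isometry of $d$ (because $|i-j|=|(-i)-(-j)|$ and $|i|=|-i|$), it follows that $d(-x,-T(x))\le 1$, i.e.\ $-T(x)$ always lies within distance $1$ of $-x$. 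This yields the following key fact $(\star)$: \emph{if $B\in\V_{k-1}$ then $-((B)_1)\in\V_k$}. To see it, note that since $(-T)(\V_k)=\V_{k-1}$ the set $\{x\in W_{Lv_{\pm k}}:-T(x)\in B\}$ belongs to $\V_k$; and if $-T(x)\in B$ then $-x\in(B)_1$ (as $d(-x,-T(x))\le1$), so this set is contained in $-((B)_1)$. The same computation applied to the auxiliary ultrafilter $\U=\U_k$ occurring in $\V_k=\V_{k-1}\,^\frown\U_k\,^\frown\V_{k-1}$ (which satisfies $(-T)(\U_k)=\V_{k-1}$) gives the analogue: $B\in\V_{k-1}\Rightarrow -((B)_1)\in\U_k$.

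To pass from $(\star)$, which concerns $\V_{k-1}$-sets, to the statement about $\V_k$-sets, I would induct on $k$, using the factorization $\V_k=\V_{k-1}\,^\frown\U_k\,^\frown\V_{k-1}$ from the construction. Given $A\in\V_k$, for $\V_{k-1}$-many $u$, $\U_k$-many $m$ and $\V_{k-1}$-many $w$ we have $u\,^\frown m\,^\frown w\in A$; since $-(u\,^\frown m\,^\frown w)=(-u)\,^\frown(-m)\,^\frown(-w)$, to certify that a word $u'\,^\frown m'\,^\frown w'$ lies in $-(A)_1$ it suffices to produce such $u,m,w$ of matching lengths with $d(-u',u)\le1$, $d(-m',m)\le1$, $d(-w',w)\le1$. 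The outer coordinates are handled by the inductive hypothesis (subsymmetry of $\V_{k-1}$, equivalently $-((C)_1)\in\V_{k-1}$ for $C\in\V_{k-1}$), and the middle coordinate by subsymmetry of $\U_k$; the latter is in turn reduced, via the idempotent $\U_k\,^\frown\V_{k-1}$ and the relation $(-T)(\U_k)=\V_{k-1}$, to subsymmetry of $\V_{k-1}$ by the same $(\star)$-type argument. The base case $k=1$ is easy: on $W_{Lv_{\pm1}}$ the map $-T$ coincides with $x\mapsto x[\vec 0]$, so $(-T)(\V_1)=\W$, reflection is the identity on $W_L$ (hence $\W=-\W$), and $(\star)$ together with this symmetry of $\W$ gives the claim. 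The length-matching needed to split $u'\,^\frown m'\,^\frown w'$ against $u\,^\frown m\,^\frown w$ is routine, since $\V_{k-1}$, $\U_k$ and $\V_k$ are cofinite and hence contain words of every sufficiently large length with any prescribed $L$-part.

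The main obstacle is the metric bookkeeping with only one unit of slack: reflection can displace a word by as much as $2k$ in $d$ (a coordinate carrying $v_k$ is sent to $v_{-k}$, at distance $2k$), so it is not at all obvious that distance $1$ suffices, and naive ``push down by $-T$, then lift back'' arguments accumulate error. What rescues the estimate is precisely the double role of $-T$: it simultaneously lowers the maximal magnitude (moving into $W_{Lv_{\pm(k-1)}}$, where the inductive hypothesis is available) and lands within distance $1$ of the reflection. The delicate point to verify carefully is therefore that in the inductive step each reflected block $-m$, $-w$, etc.\ is compared with $-T$ of an appropriate generator rather than with the generator itself, so that the per-block displacements are each $\le 1$ and never aggregate past $1$ across the concatenation; this is where the precise interplay of $T$, reflection, and the asymmetric ``$v_{\pm1}\leftrightarrow 0$'' slack in the metric must be used.
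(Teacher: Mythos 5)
Your core metric observation ($d(x,T(x))\le 1$, reflection is a $d$-isometry) and the relevance of $(-T)(\V_k)=\V_{k-1}$ are exactly right, but the inductive assembly has a genuine hole at the middle coordinate. The three-block argument via $\V_k=\V_{k-1}\,{}^\frown\,\U_k\,{}^\frown\,\V_{k-1}$ needs subsymmetry of the auxiliary ultrafilter $\U_k$: given $D\in\U_k$ you must know $-(D)_1\in\U_k$. The ``$(\star)$-type'' reduction you invoke does not apply to $\U_k$, because that computation requires the ultrafilter in question to be idempotent and to absorb $\V_{k-1}$ on both sides (so that it can be rewritten as $(-T)(\cdot)\,{}^\frown\,(\cdot)$ and $(\cdot)\,{}^\frown\,(-T)(\cdot)$ of itself), whereas the construction only guarantees $(-T)(\U_k)=\V_{k-1}$ and that $\U_k\,{}^\frown\,\V_{k-1}$ is idempotent; $\U_k$ itself is neither idempotent nor known to satisfy $\V_{k-1}\,{}^\frown\,\U_k=\U_k$ or $\U_k\,{}^\frown\,\V_{k-1}=\U_k$. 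Your $(\star)$ only produces sets of the form $-(B)_1$ in $\V_k$ from $B\in\V_{k-1}$, a cross-level statement that cannot substitute for this. The base case $k=1$ has the same problem in miniature: after writing $\V_1=\W\,{}^\frown\,\V_1$ and using $-\W=\W$, you still need, for $\V_1$-many $y'$, an element of a given $\V_1$-set within distance $1$ of $-y'$ --- which is precisely subsymmetry of $\V_1$, so that reading is circular; and the alternative of pushing $A$ into $W_L$ via $-T$ and lifting back costs $2$ rather than $1$, exactly the error accumulation you flag at the end but do not resolve.

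The paper avoids the induction and $\U_k$ entirely. Since $\V_k$ is idempotent, $\V_k\le\V_{k-1}$ and $(-T)(\V_k)=\V_{k-1}$, one has $\V_k=(-T)(\V_k)\,{}^\frown\,\V_k=\V_k\,{}^\frown\,(-T)(\V_k)$ (with $\V_0:=\W$ when $k=1$). Hence $A\in\V_k$ means $(\V_k x)(\V_k y)\,(-T)(x)^\frown y\in A$; replacing this word by the compatible word $(-x)^\frown T(y)$, which lies within distance $1$ (your observation), and then negating via $-\bigl((-x)^\frown T(y)\bigr)=x^\frown(-T)(y)$ gives $(\V_k x)(\V_k y)\; x^\frown(-T)(y)\in -(A)_1$, i.e. $-(A)_1\in\V_k\,{}^\frown\,(-T)(\V_k)=\V_k$. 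So a single two-sided factorization of $\V_k$ through $-T$ does all the work, with no splitting into three blocks and no properties of $\U_k$ beyond those already folded into $\V_k$. To rescue your scheme you would have to construct $\U_k$ with extra idempotency/absorption properties, at which point you are essentially rebuilding $\V_k$; as written, the step ``subsymmetry of $\U_k$'' is unsupported and the proof does not go through.
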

\begin{proof}
Since $\V_k \leq \V_{k-1}$ and $(-T)(\V_k) = \V_{k-1}$ by property (3) in the definition of the ultrafilters $(\V_i)_{1\leq i \leq k}$, we have $$\V_k = (-T)(\V_k) ^\frown \V_k = {\V_k}^\frown (-T)\V_k.$$ (When $k=1$, define $\V_0 = \W$.) Thus, for each $A \subseteq W_{Lv_{\pm k}}$,
\begin{equation*}
\begin{split}
A \in \V_k &\iff (\V_k x)(\V_k y) \, (-T)(x)^\frown y \in A \\
& \implies (\V_k x)(\V_k y) \, (-x)^\frown T(y) \in (A)_1 \\
& \iff (\V_k x)(\V_k y) \,  x^\frown (-T)(y) \in -(A)_1 \\
& \iff -(A)_1 \in {\V_k}^\frown (-T)\V_k = \V_k
\end{split}
\end{equation*}
where we use the easy fact that $(-T)(x)^\frown y$ and $(-x)^\frown T(y)$ are compatible.
\end{proof}

View the space $W_{Lv_{\pm k}}^{[< \infty]}$ of finite rapidly increasing sequences as a tree ordered by end-extension and with root $\emptyset$. Fix the subsymmetric cofinite ultrafilter $\V_k$ define above. Exactly as in the previous section, we define the notions of $\V_k$-tree, $\V_k$-open and $\V_k$-Ramsey relative to the tree $W_{Lv_{\pm k}}^{[<\infty]}$. An application of the ultra-Ellentuck theorem in this setting then yields:

\begin{cor}\label{cor2}
For every $r \in \N$ and every Souslin measurable $c : W_{Lv_{\pm k}}^{[\infty]} \rightarrow r$ there are $i < r$ and a $\V_k$-tree $U$ with stem $\emptyset$ such that $[U] \subseteq c^{-1}\{i\}$.
\end{cor}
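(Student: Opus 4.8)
The plan is to mimic exactly the argument that produced Theorem \ref{cor1} in Section 3, now carried out with the tree $W_{Lv_{\pm k}}^{[<\infty]}$ and the subsymmetric cofinite ultrafilter $\V_k$ constructed just above. The essential point is that everything needed — the notions of $\V_k$-tree, $\V_k$-open set, and $\V_k$-Ramsey set — has already been declared to make sense verbatim in this setting, because their definitions only refer to the ambient tree and the choice of a fixed cofinite ultrafilter on the relevant space of words. In particular, the same reasoning shows that the $\V_k$-open subsets of $W_{Lv_{\pm k}}^{[\infty]}$ form a topology refining the metrizable topology, and one obtains the corresponding ultra-Ellentuck theorem: a subset $\X \subseteq W_{Lv_{\pm k}}^{[\infty]}$ has the property of Baire with respect to the $\V_k$-topology if and only if $\X$ is $\V_k$-Ramsey. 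I would state this as a (parenthetical) consequence of the general ultra-Ellentuck machinery from \cite[Chapter 7]{T}, just as was done before Theorem \ref{cor1}.

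Granting that, the proof of the corollary is short. First I would invoke the fact, recalled in the excerpt (see \cite[Corollary 4.8]{T}), that the class of sets with the property of Baire relative to a fixed topology is closed under the Souslin operation; hence every Souslin measurable subset of $W_{Lv_{\pm k}}^{[\infty]}$ is $\V_k$-Ramsey. Now given $r \in \N$ and a Souslin measurable colouring $c : W_{Lv_{\pm k}}^{[\infty]} \rightarrow r$, consider the full tree $U_0 = W_{Lv_{\pm k}}^{[<\infty]}$, which is a $\V_k$-tree with stem $\emptyset$ (its nodes $t$ extending the stem all satisfy $(U_0)_t = \{x \in W_{Lv_{\pm k}} : x \text{ rapidly extends } t\}$, a cofinite set and hence a member of $\V_k$). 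Applying the $\V_k$-Ramsey property of the fibre $c^{-1}\{0\}$ to $U_0$ yields a $\V_k$-subtree with the same stem all of whose branches lie in $c^{-1}\{0\}$ or all of whose branches avoid it; iterating at most $r-1$ more times through the fibres $c^{-1}\{1\}, \dots, c^{-1}\{r-2\}$, and shrinking the tree at each step while keeping the stem $\emptyset$, produces in finitely many steps a $\V_k$-tree $U$ with $\stem(U) = \emptyset$ such that $[U]$ is monochromatic, i.e. $[U] \subseteq c^{-1}\{i\}$ for some $i < r$.

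I do not expect a genuine obstacle here: the statement is a routine transfer of Theorem \ref{cor1}, and the only content specific to the $\fin_{\pm k}$ setting — the subsymmetry of $\V_k$ — is not needed for the corollary itself but only later, when extracting an approximately monochromatic block sequence from the tree $U$ (the analogue of Lemma \ref{lem1}). The one place requiring a moment's care is simply to confirm that the ultra-Ellentuck theorem genuinely applies to $W_{Lv_{\pm k}}^{[<\infty]}$: this holds because that tree, ordered by end-extension with root $\emptyset$, together with the cofinite ultrafilter $\V_k$, satisfies the abstract hypotheses of \cite[Chapter 7]{T} exactly as $W_{Lv_k}^{[<\infty]}$ did in Section 3. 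Once that is noted, the colouring argument above goes through unchanged.
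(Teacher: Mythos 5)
Your proposal is correct and follows essentially the same route as the paper: the paper also obtains Corollary \ref{cor2} by transferring the ultra-Ellentuck machinery of \cite[Chapter 7]{T} to the tree $W_{Lv_{\pm k}}^{[<\infty]}$ with the cofinite ultrafilter $\V_k$, using closure of the Baire property under the Souslin operation to see that each fibre of $c$ is $\V_k$-Ramsey, and then shrinking the full tree fibre by fibre. You are also right that subsymmetry of $\V_k$ plays no role here and only enters later, in the analogues of Lemma \ref{lem1}.
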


Given $\alpha \leq \omega$ and a sequence $X = (x_n)_{n<\alpha}$ in $W_{Lv_{\pm k}}$, let $[X]_{(-T)}$ denote the set of all words of the form $$(-T)^{j_0}(x_{n_0}[\vec \lambda_0]) ^\frown \dots ^\frown (-T)^{j_l}(x_{n_l}[\vec \lambda_l])$$ where $l \geq 0, n_0 < \dots < n_l < \alpha, \vec \lambda_i \in L_{n_i}^k \cup \{\vec v\}$, and $j_0, \dots, j_l \leq k$ such that $\min j_i = 0$ and $\vec \lambda_i = \vec v$ for some $i \leq l$. When the sequence $X = (x_n)_{n<m}$ is finite, we will often write $[x_0, \dots, x_{m-1}]_{(-T)}$ for the above collection. If $\alpha \leq \omega$ and $X = (x_n)_{n < \alpha}, \, Y$ are rapidly increasing sequences in $W_{Lv_{\pm k}}$, write $X \leq_{(-T)} Y$ whenever $x_n \in [Y]_{(-T)}$ for every $n < \alpha$ and $$\max \supp_Y(x_n) < \min \supp_Y(x_m) \text{ whenever $n < m < \alpha$}.$$

\begin{lem}\label{lem2}
For every $\V_k$-tree $U$ with stem $\emptyset$ there is $Y = (y_n)_{n < \omega} \in W_{Lv_{\pm k}}^{[\infty]}$ together with a decreasing sequence $(A_n)_{n<\omega}$ of subsets of $W_{Lv_{\pm k}}$ such that:
	\begin{enumerate}[(a)]
		\item $A_n \subseteq U_t \cap -(U_t)_1$ for every $t \in U$ such that $t \leq_{(-T)} (y_0, \dots, y_{n-1})$.
		\item $[ y_m, \dots, y_n]_{(-T)} \subseteq A_m$ for all $m \leq n < \omega$.
	\end{enumerate}
\end{lem}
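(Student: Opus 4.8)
The plan is to follow the proof of Lemma~\ref{lem1} almost verbatim, with the tetris operation $T$ replaced by $-T$ throughout and with the subsymmetry of $\V_k$ used to absorb the extra conjunct $-(U_t)_1$ appearing in clause~(a). Concretely, I would build by induction on $n$ a decreasing sequence $(A_n)_{n<\omega}$ of subsets of $W_{Lv_{\pm k}}$, each a member of $\V_k$, together with a rapidly increasing sequence $(y_n)_{n<\omega}$ in $W_{Lv_{\pm k}}$, such that for every $n<\omega$:
\begin{enumerate}[(1)]
\item $y_n \in A_n$;
\item $A_{n+1} \subseteq \{z \in W_{Lv_{\pm k}} : [y_n, z]_{(-T)} \subseteq A_n\}$;
\item $A_n \subseteq U_t \cap -(U_t)_1$ for every $t \in U$ with $t \leq_{(-T)} (y_i)_{i<n}$.
\end{enumerate}
Clause~(a) of the lemma is then exactly (3), and clause~(b) follows from (1) and (2) by downward induction on $m \le n$.

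For the base step, put $A_0 := U_\emptyset \cap -(U_\emptyset)_1$; this lies in $\V_k$ because $U_\emptyset \in \V_k$ (as $U$ is a $\V_k$-tree with stem $\emptyset$) and $\V_k$ is subsymmetric, so $-(U_\emptyset)_1 \in \V_k$. The absorption identities $\V_k = (-T)^j(\V_k) ^\frown \V_k = \V_k ^\frown (-T)^j(\V_k)$ for $j \le k$, which hold since $(-T)^j(\V_k) = \V_{k-j} \ge \V_k$ (with the convention $\V_0 = \W$), together with $\V_k = \V_k[\vec\lambda] ^\frown \V_k = \V_k ^\frown \V_k[\vec\lambda]$ for $\vec\lambda \in L^k$ (property (4) of the $\V_i$), translate into the ultrafilter statement $(\V_k y)(\V_k z)\big([y,z]_{(-T)} \subseteq A_0\big)$ exactly as in Lemma~\ref{lem1}; we then choose $y_0$ with $(\V_k z)\big([y_0,z]_{(-T)} \subseteq A_0\big)$, and in particular $y_0 \in A_0$ since $y_0 \in [y_0,z]_{(-T)}$. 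At stage $n+1$, given $A_0,\dots,A_n$ and $y_0,\dots,y_{n-1}$, use that $\V_k$ is cofinite to pick $y_n$ with $|y_n| > \sum_{i<n}|y_i|$ and $(\V_k z)\big([y_n,z]_{(-T)} \subseteq A_n\big)$, and set
\[ A_{n+1} := \{z \in W_{Lv_{\pm k}} : [y_n,z]_{(-T)} \subseteq A_n\} \cap \bigcap\left\{\, U_t \cap -(U_t)_1 : t \in U \text{ and } t \leq_{(-T)} (y_0,\dots,y_n)\,\right\}. \]
The index set $\{t \in U : t \leq_{(-T)} (y_0,\dots,y_n)\}$ is finite because every node of such a $t$ belongs to $[y_0,\dots,y_n]_{(-T)}$, which is a finite set of words: there are only finitely many tetris powers $j_i \le k$ and, since each $L_{n_i}$ is finite, finitely many substitution tuples. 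Hence $A_{n+1}$ is a finite intersection of members of $\V_k$, subsymmetry again giving $-(U_t)_1 \in \V_k$ whenever $U_t \in \V_k$; so $A_{n+1} \in \V_k$, and $A_{n+1} \subseteq A_n$ since $z \in [y_n,z]_{(-T)}$.

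Clause~(a) is immediate from (3), the case $n=0$ being covered because $t \leq_{(-T)} \emptyset$ forces $t = \emptyset$. For clause~(b), fix $n$ and prove $[y_m,\dots,y_n]_{(-T)} \subseteq A_m$ by downward induction on $m\le n$: the case $m=n$ is (1) together with $[y_n]_{(-T)} = \{y_n\}$, and the passage from $m$ to $m-1$ is carried out exactly as in Lemma~\ref{lem1}, writing a typical element $z = (-T)^{j_{m-1}}(y_{m-1}[\vec\lambda_{m-1}]) ^\frown \dots ^\frown (-T)^{j_n}(y_n[\vec\lambda_n])$ and splitting according to whether $j_i = 0$ for some $i > m-1$ (in which case the tail already lies in $A_m$ by the inductive hypothesis and $z \in [y_{m-1}, \text{tail}]_{(-T)} \subseteq A_{m-1}$ by (2)) or $j_i > 0$ for all $i > m-1$ (in which case one factors out $(-T)^l$ with $l = \min\{j_m,\dots,j_n\}$, which is legitimate since $-T$, hence $(-T)^l$, is a semigroup homomorphism, reduces to the inductive hypothesis, and again applies (2)).

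The one genuinely new point — and the step to watch — is that each $A_n$ must remain inside $\V_k$ after being intersected with the reflected sets $-(U_t)_1$; this is exactly what the subsymmetry of $\V_k$ provides, and it is precisely why that ultrafilter was constructed to be subsymmetric. The remaining verifications — that $(-T)^j(\V_k) = \V_{k-j}$ and $\V_k[\vec\lambda] = \W$ give the absorption identities needed for the $(-T)$-version of the partial subsemigroup, and that $-T$ is a homomorphism so that the case analysis in clause~(b) goes through unchanged — are immediate from the construction of $(\V_i)_{1\le i\le k}$ and the remarks following it.
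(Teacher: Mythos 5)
Your proposal is correct and follows essentially the same route as the paper's own proof: the inductive construction of $(A_n)$ and $(y_n)$ with conditions (1)--(3), the use of subsymmetry of $\V_k$ to keep the sets $U_t \cap -(U_t)_1$ in the ultrafilter, the finiteness of $\{t \in U : t \leq_{(-T)} (y_0,\dots,y_n)\}$, and the verification of clause (b) by the same two-case downward induction as in Lemma~\ref{lem1}. No gaps to report.
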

\begin{proof}
By induction on $n$, define a decreasing sequence $(A_n)_{n<\omega}$ together with a rapidly increasing sequence $(y_n)_{n<\omega}$ such that, for all $n < \omega$:
	\begin{enumerate}
		\item $y_n \in A_n \in \V_k$.
		\item $A_{n+1} \subseteq \{z \in W_{Lv_{\pm k}} : [ y_n, z ]_{(-T)} \subseteq A_n\}$.
		\item $A_n \subseteq U_t \cap -(U_t)_1$ for every $t \in U$ such that $t \leq_{(-T)} (y_i)_{i < n}$.
	\end{enumerate}
To start, take $A_0 := U_\emptyset \cap -(U_\emptyset)_1$ and note that $A_0 \in \V_k$ since $\V_k$ is subsymmetric and $U_\emptyset \in \V_k$. The definition of $\V_k$ implies $$(\V_k y)(\V_k z) \left([ y, z ]_{(-T)} \subseteq A_0\right)$$ and so we take any $y_0 \in W_{Lv_{\pm k}}$ such that $(\V_k z)\left( [ y_0, z ]_{(-T)} \subseteq A_0\right)$; in particular $y_0 \in A_0$ by definition of $[ y_0, z ]_{(-T)}$. We then take $A_1$ to be the intersection of the set $\{z \in W_{Lv_{\pm k}} : [ y_0, z ]_{(-T)} \subseteq A_0\}$ with $$\bigcap\left\{ U_t \cap -(U_t)_1 : t\in U \text{ and} \, t \leq_{(-T)} (y_0) \right\}.$$ Note that $A_0 \supseteq A_1$ and $A_1 \in \V_k$ since there are only finitely many $t \in U$ satisfying $t \leq_{(-T)} (y_0)$, and since each $U_t \cap -(U_t)_1 \in \V_k$ using the fact that $\V_k$ is subsymmetric.

Now suppose $A_0, \dots, A_n$ and $y_0, \dots, y_{n-1}$ have been constructed. Since $\V_k$ is cofinite, it follows that there is $y_n \in W_{Lv_{\pm k}}$ such that $$ |y_n| > \sum_{i=0}^{n-1} |y_i|$$ and $(\V_k z)\left( [ y_n, z ]_{(-T)} \subseteq A_n\right)$; in particular $y_n \in A_n$. Then take $A_{n+1}$ to be the intersection of the set $\{z \in W_{Lv_{\pm k}} : [ y_n, z ]_{(-T)} \subseteq A_n\}$ with $$\bigcap\left\{ U_t \cap -(U_t)_1 :t \in U \text{ and} \, t \leq_{(-T)} (y_0,\dots, y_n) \right\}.$$ Observe that the collection $[y_0,\dots, y_{n-1}]_{(-T)}$ is finite since we only allow substitutions of the form $y_i[\vec \lambda_i]$ for $\vec \lambda_i \in L_i^k \cup \{\vec v\}$ and so there are only finitely many sets in the above intersection. Thus $A_{n+1} \in \V_k$ and $A_n \supseteq A_{n+1}$. This completes the inductive construction of the sequences $(A_n)$ and $(y_n)$. In particular, condition (a) is satisfied by (3). The verification of (b) is exactly the same as that of the corresponding condition in the statement of Lemma \ref{lem1} after making the obvious adjustments.
\end{proof}

\begin{lem}\label{lem3}
Let $U$ be a $\V_k$-tree with $\stem(U) = \emptyset$. Then there is an infinite rapidly increasing sequence $Y = (y_n)_{n < \omega}$ in $W_{Lv_{\pm k}}$ such that $[Y]_{\pm k}^{[\infty]} \subseteq ([U])_2$.
\end{lem}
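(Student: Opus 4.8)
The plan is to combine Lemma \ref{lem2} with a block-subsequence extraction argument, analogous to how Theorem \ref{infGHJ} was deduced from Lemma \ref{lem1}, but now accounting for the reflections $\varep_i \in \{\pm 1\}$ appearing in $[Y]_{\pm k}^{[\infty]}$ that are absent from the purely $(-T)$-generated subsemigroup $[Y]_{(-T)}$. First I would apply Lemma \ref{lem2} to the given $\V_k$-tree $U$ to obtain $Y = (y_n)_{n<\omega} \in W_{Lv_{\pm k}}^{[\infty]}$ and a decreasing sequence $(A_n)$ with properties (a) and (b). The goal is then to show: whenever $X = (x_n) \in [Y]_{\pm k}^{[\infty]}$, there is $\widetilde X = (\widetilde x_n) \in [U]$ with $d(X, \widetilde X) \leq 2$, which gives $X \in ([U])_2$.

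The key observation is that an arbitrary element of $[Y]_{\pm k}$ has the form $\varep_0 T^{j_0}(y_{n_0}[\vec\lambda_0])^\frown \dots ^\frown \varep_l T^{j_l}(y_{n_l}[\vec\lambda_l])$, which differs from the corresponding element $(-T)^{j_0'}(y_{n_0}[\vec\lambda_0])^\frown\dots ^\frown(-T)^{j_l'}(y_{n_l}[\vec\lambda_l]) \in [y_{n_0},\dots,y_{n_l}]_{(-T)}$ (for a suitable choice of exponents $j_i'$, determined so that the sign of the reflection matches the parity built into $-T$) only in the values of the variable-symbols $v_i$ versus $v_{-i}$ at non-$L$ positions, and $d(v_i, v_{-i}) = 2|i| $ — but more carefully, one checks that for the appropriate matching the discrepancy at each coordinate is at most $2$, since $-T$ already reflects while applying tetris, so the mismatch between $\varep_i T^{j_i}$ and $(-T)^{j_i}$ is exactly whether an extra sign was applied, contributing at most $d(v_i, v_{-i})/|i| \cdot$ (bounded error) — the clean statement being that every element of $[Y]_{\pm k}$ lies within $d$-distance $2$ of an element of $[Y]_{(-T)}$ that uses the same supports and substitution tuples. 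Granting this, given $X \leq Y$ with $X \in [Y]_{\pm k}^{[\infty]}$, for each $n$ pick $\widetilde x_n \in [Y]_{(-T)}$ with the same $Y$-support as $x_n$ and $d(x_n, \widetilde x_n) \leq 2$; since the supports are unchanged, $\widetilde X = (\widetilde x_n)$ satisfies $\widetilde X \leq_{(-T)} Y$ and is rapidly increasing.

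Next I would show $\widetilde X \in [U]$ by induction on the length $m$ of $s := (\widetilde x_0, \dots, \widetilde x_{m-1})$, exactly as in the proof of Theorem \ref{infGHJ}: the base case $s = \emptyset \in U$ holds since $\stem(U) = \emptyset$; for the inductive step, $\widetilde x_m \in [y_{n_0},\dots,y_{n_l}]_{(-T)} \subseteq A_{n_0}$ by property (b) of Lemma \ref{lem2}, and since $\max\supp_Y(\widetilde x_{m-1}) < \min\supp_Y(\widetilde x_m) = n_0$ we have $s \leq_{(-T)} (y_0,\dots,y_{n_0 - 1})$, so property (a) gives $A_{n_0} \subseteq U_s$, whence $\widetilde x_m \in U_s$, i.e. $(s, \widetilde x_m) \in U$. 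Thus $\widetilde X \in [U]$ and $d(X, \widetilde X) = \sup_n d(x_n, \widetilde x_n) \leq 2$, so $X \in ([U])_2$, completing the proof.

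The main obstacle I anticipate is the bookkeeping in the second paragraph: precisely matching, for a given element of $[Y]_{\pm k}$, which exponents $j_i'$ and which ($-T$)-generated word give the distance-$2$ approximation, and verifying the distance bound coordinate-by-coordinate (in particular confirming the error is genuinely $\leq 2$ rather than $\leq 2k$ or infinite due to an incompatibility of $L$-values — which cannot happen here since substitutions agree on $L$-coordinates, so compatibility is automatic). Once the single-word approximation claim is isolated and proved, the block-sequence argument and the induction on $U$ are routine adaptations of the $\fin_k$ case.
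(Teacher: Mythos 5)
Your reduction hinges on the ``clean statement'' that every element of $[Y]_{\pm k}$ lies within $d$-distance $2$ of an element of $[Y]_{(-T)}$ with the same supports and substitution tuples, and that statement is false. Consider $x = -y_{n_0}$, or more generally any $x = \varep_0 T^{j_0}(y_{n_0}[\vec\lambda_0])^\frown \dots ^\frown \varep_l T^{j_l}(y_{n_l}[\vec\lambda_l])$ in which \emph{every} index $i$ with $j_i = 0$ and $\vec\lambda_i = \vec v$ carries $\varep_i = -1$. The parity-matched rounding you describe (replace $\varep_i T^{j_i}$ by $(-T)^{j_i}$ or $(-T)^{j_i+1}$ according to the sign and the parity of $j_i$) necessarily raises the exponent at each such witness index from $0$ to $1$, so the rounded word contains no occurrence of $v_{\pm k}$ at all: it is not a $v_{\pm k}$-variable word and hence not an element of $[Y]_{(-T)}$. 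Keeping the exponent at $0$ instead costs $d(v_{-k}, v_k) = 2k$: for $x = -y_{n_0}$ the only member of $[y_{n_0}]_{(-T)}$ with the same support and substitution is $y_{n_0}$ itself, at distance $2k$ from $x$. So for such $x$ there is no approximant in $[Y]_{(-T)}$ within distance $2$ (for $k>1$), and the per-word approximation step on which your whole argument rests fails.

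This is exactly where the subsymmetry of $\V_k$ --- encoded in the $-(U_t)_1$ half of condition (a) of Lemma \ref{lem2}, which your argument never uses --- has to enter, and it is also why the lemma asserts $([U])_2$ rather than $([U])_1$. The paper's proof splits into two cases: if some witness index has $\varep_i = +1$, your rounding does work and produces an element of $[y_{n_0},\dots,y_{n_l}]_{(-T)}$ at distance at most $1$ from $x_n$; in the all-negative case one applies that rounding to $-x_n$ (which does have a positive witness) to get $z \in [y_{n_0},\dots,y_{n_l}]_{(-T)} \subseteq A_{n_0} \subseteq -(U_s)_1$ with $d(-x_n, z) \leq 1$, and then uses this containment to choose $z' \in U_s$ with $d(-z, z') \leq 1$, setting $x_n' := z'$, so that $d(x_n, x_n') \leq 2$. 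Note that this approximant lies in $U_s$ but need not lie in $[Y]_{(-T)}$, so the construction has to be organized as a direct induction on membership in $U$ (choosing $x_n'$ with $(x_0',\dots,x_{n-1}',x_n') \in U$), not as the production of a $\leq_{(-T)}$-block subsequence followed by the routine induction you outline. As written, your proposal proves the lemma only for those $X \leq Y$ all of whose terms fall under the positive-witness case.
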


\begin{proof}
Let $Y$ be as in Lemma \ref{lem2}. We claim that $Y$ satisfies the conclusion of the lemma. To see this, fix an infinite rapidly increasing block subsequence $X = (x_n)_{n < \omega}$ of $Y$. We will construct a rapidly increasing sequence $X' = (x_n')_{n <\omega} \in [U] \cap [Y]^{[\infty]}_{(-T)}$ such that $d(x_n, x_n') \leq 2$ for each $n < \omega$. Suppose, for some $n \geq 0$, we have defined $x_0', \dots, x_{n-1}' \in W_{Lv_{\pm k}}$ such that $s := (x_0', \dots, x_{n-1}') \in U$ and $d(x_i, x_i') \leq 2$ for each $i<n$. (In the case where $n = 0$ we simply have $s = \emptyset$.) Write $$x_n =  \varep_0 T^{j_0}(y_{n_0}[\vec\lambda_0])^\frown \dots ^\frown \varep_l T^{j_l}(y_{n_l}[\vec\lambda_l])$$ where $l < \omega$, $n_0 < \dots < n_l, j_i \leq k, \varep_i \in \{\pm 1\}$ and $\vec\lambda_i \in L_{n_i}^{2k} \cup \{ \vec v\}$ are such that $\min j_i = 0$ and $\vec \lambda_i = \vec v$ for some $i \leq l$. We consider the following two cases:

\begin{case}
There is $i \leq l$ such that $j_i = 0, \vec \lambda_i = \vec v$ and $\varep_i = +1$.

\noindent For each $i \leq l$, set $z_i := \varep_i T^{j_i}(y_{n_i}[\vec \lambda_i])$ for convenience. We consider the following two subcases:
\begin{enumerate}[(a)]
	\item $\varep_i = +1$ and $j_i$ is even, or $\varep_i = -1$ and $j_i$ is odd. In either case, set $z_i' := z_i$ and note that $z_i' = (-T)^{j_i}(y_{n_i}[\vec \lambda_i])$.
	\item $\varep_i = +1$ and $j_i$ is odd, or $\varep_i = -1$ and $j_i$ is even. In either case, set $z_i' := T(z_i)$ and note that $z_i' = (-T)^{j_i + 1}(y_{n_i}[\vec \lambda_i])$.
\end{enumerate}
We then set $$x_n' := {z_0'} ^\frown \dots ^\frown z_l'.$$ Note that $x_n'$ is compatible with $x_n$, and $x_n' \in [ y_{n_i} : i \leq l ]_{(-T)}$ by the assumption given by Case 1. Since $d(z_i, z_i') \leq 1$ for all $i \leq l$ we have $d(x_n, x_n') \leq 1$. Furthermore, by the choice of the sequence $Y$ we have $$[ y_{n_i} : i \leq l ]_{(-T)} \subseteq A_{n_0}$$ (using the notation of Lemma \ref{lem2}) and so $x_n' \in U_t$ for every $t \in U$ such that $t \leq_{(-T)} (y_0, \dots, y_{n_0-1})$. In particular, $x_n' \in U_s$ since $$\max \supp_Y(x_{n-1}) < \min \supp_Y(x_n) = n_0$$ and so $(s, x_n') \in U$.
\end{case}

\begin{case} For every $i \leq l$, if $j_i = 0$ and $\vec \lambda_i = \vec v$, then $\varep_i = -1$.

\noindent Apply Case 1 to $-x_n$ to obtain $z \in [ y_{n_i} : i \leq l ]_{(-T)}$ such that $d(-x_n ,z)\leq 1$. By definition of $Y$ we have $$[ y_{n_i} : i \leq l ]_{(-T)} \subseteq A_{n_0}$$ and so $z \in U_t \cap -(U_t)_1$ for every $t \in U$ such that $t \leq_{(-T)} (y_0, \dots, y_{n_0-1})$. As before, this implies $-z \in (U_s)_1$ and so there is $z' \in U_s$ such that $d(-z, z') \leq 1$. Set $x_n' := z'$. Then $x_n'$ is compatible with $x_n$ and $$d(x_n, x_n') \leq d(x_n, -z) + d(-z, z') = d(-x_n, z) + d(-z, z') \leq 2$$ and so $x_n'$ satisfies our requirements.
\end{case}
This completes the inductive construction of $X'$. It is clear from the above construction that $X' \in [U]$ and $d(x_n, x_n') \leq 2$ for all $n < \omega$ and so $X \in ([U])_2$.
\end{proof}

To minimize the ``error'' in the previous result, we will use the following family of mappings: For each $k \in \N$, let $\Phi_k : W_{Lv_{\pm 2k}} \rightarrow W_{Lv_{\pm k}}$ be defined by setting
\[ \Phi_k(x)(n) := \begin{cases} 
      v_{i/2} & \text{ if $x(n) = v_i$ where $i$ is even}, \\
      v_{(i-1)/2} & \text{ if $x(n) = v_i$ where $i$ is odd and positive}, \\
      v_{(i+1)/2} & \text{ if $x(n) = v_i$ where $i$ is odd and negative}, \\
      x(n) & \text{ if $x(n) \in L$}.
   \end{cases}
\]
The following properties of $\Phi_k$ are easy to check:
\begin{enumerate}[(i)]
	\item $\Phi_k$ is a surjective homomorphism of partial semigroups which, in addition, satisfies $\Phi_k(-x) = -\Phi_k(x)$ for every $x \in W_{Lv_{\pm 2k}}$.
	\item For every $x, y \in W_{Lv_{\pm 2k}}$ and every $i, j \leq k$ with $\min\{i,j\} = 0$, $$\Phi_k\left(T^{2i}(x) ^\frown T^{2j}(y)\right) = T^i(\Phi_k(x))^\frown T^j(\Phi_k(y)).$$
	\item For every $x, y \in W_{Lv_{\pm 2k}}$, $d(x,y) \leq 2 \implies d(\Phi_k(x), \Phi_k(y)) \leq 1$. In particular, $\Phi_k$ preserves the compatibility relation between words.
\end{enumerate}
We extend $\Phi$ to $W_{Lv_{\pm 2k}}^{[\infty]}$ by setting $$\Phi((y_n)_{n < \omega}) := (\Phi(y_n))_{n < \omega}.$$ It is straightforward to check that $\Phi$ is continuous with respect to the usual metrizable topologies. Furthermore, note that if $Y$ and $Y'$ are two sequences in $W_{Lv_{\pm 2k}}$ which satisfy $d(Y,Y') \leq 2$, then $d(\Phi(Y),\Phi(Y')) \leq 1$. We are now ready to finish the proof of the approximate Gowers-Hales-Jewett theorem.

\begin{proof}[Proof of Theorem \ref{infapproxGHJ}]
Let $c : W_{Lv_{\pm k}}^{[\infty]} \rightarrow r$ be Souslin measurable and define a colouring $\widetilde{c} : W_{Lv_{\pm 2k}}^{[\infty]} \rightarrow r$ by setting $\widetilde{c} := c \circ \Phi$. Since $\Phi$ is continuous and $c$ is Souslin measurable, it follows that $\widetilde{c}$ is Souslin measurable. By Corollary \ref{cor2} there are $i < r$ and a $\V_k$-tree $U$ with stem $\emptyset$ such that $[U] \subseteq \widetilde{c}^{-1}\{i\}$. Applying Lemma \ref{lem3}, find an infinite rapidly increasing sequence $\widetilde{Y} = (\widetilde{y_n})_{n < \omega}$ in $W_{Lv_{\pm 2k}}$ such that $[\widetilde{Y}]_{\pm 2k}^{[\infty]} \subseteq ([U])_2$.

Let $Y := \Phi(\widetilde{Y}) \in W_{Lv_{\pm k}}^{[\infty]}$ so that $y_n := \Phi(\widetilde{y_n})$ for each $n<\omega$. We claim that $Y$ satisfies $$[Y]_{\pm k}^{[\infty]} \subseteq \left(c^{-1}\{i\}\right)_1.$$ Indeed, if $X = (x_n)_{n < \omega} \in W_{Lv_{\pm k}}^{[\infty]}$ is an infinite rapidly increasing subsequence of $Y$, then for each $n < \omega$ we have $$x_n = \varep_0 T^{j_0}(y_{n_0}[\vec \lambda_0])^\frown \dots ^\frown \varep_l T^{j_l}(y_{n_l}[\vec \lambda_l])$$ for some $\varep_i \in \{\pm 1\}, n_0 < \dots < n_l, \vec \lambda_i \in L_{n_i}^k \cup \{\vec v\}$ and $j_i \leq k$ such that $\min j_i = 0$ and $\vec \lambda_i = \vec v$ for some $i \leq l$. Then properties (i) and (ii) of $\Phi$ listed above imply $x_n = \Phi(\widetilde{x_n})$, where $$\widetilde{x_n} := \varep_0 T^{2j_0}(y_{n_0}[\vec \lambda_0])^\frown \dots ^\frown \varep_l T^{2j_l}(y_{n_l}[\vec \lambda_l]) \in [\widetilde{Y}]_{\pm k}$$ and so, setting $\widetilde{X} := (\widetilde{x_n})_{n<\omega}$, we see that $X = \Phi(\widetilde{X})$. Since $\widetilde{X}$ is a rapidly increasing subsequence of $\widetilde{Y}$, by our choice of $\widetilde{Y}$ we can find $X' \in \widetilde{c}^{-1}\{i\}$ such that $d(\widetilde{X}, X') \leq 2$. Then, as observed above, property (iii) of $\Phi$ implies $$d(\Phi(\widetilde{X}), \Phi(X')) \leq 1.$$ Since $$i = \widetilde{c}(X') = c(\Phi(X'))$$ we obtain $\Phi(X') \in c^{-1}\{i\}$ and so $X \in \left(c^{-1}\{i\}\right)_1$ as required.
\end{proof}

We are now equipped to prove a parametrized version of the infinite-dimensional $\fin_{\pm k}$ theorem.

\begin{proof}[Proof of Theorem \ref{parafin1}]
Fix a finite Souslin measurable colouring $c$ of $\fin_{\pm k}^{[\infty]} \times (2^\omega)^\omega$. As before, let $$L_n = \{ \sigma \in 2^\omega : (\forall i > n) \, \sigma(i) = 0\}$$ and $L = \bigcup_{n<\omega} L_n$. Define a mapping $\varphi : W^{[\infty]}_{Lv_{\pm k}} \rightarrow \fin_{\pm k}^{[\infty]}$ by setting $\varphi((x_m)) = (a_m)$, where $a_m \in \fin_{\pm k}$ consists of all ordered pairs of the form $$\langle |x_0| + \dots + |x_{m-1}| + l, i \rangle$$ where $v_i$ occupies the $l^{\mathrm{th}}$ place in $x_m$, and where $a_m$ takes the value 0 at all other points of $\omega$. We also define a mapping $\psi : W_{Lv_{\pm k}}^{[\infty]} \rightarrow 2^{\omega \times \omega}$ by $$\psi((x_m))(n,i) = \sigma(i)$$ if $\sigma \in L$ occupies the $n^{\mathrm{th}}$ place in the infinite variable word $${x_0}^\frown {x_1} ^\frown {x_2} ^\frown \dots$$ and where $\psi((x_m))(n,i) = 0$ if a variable occupies the $n^{\mathrm{th}}$ place in the above infinite word.

Define a Souslin measurable colouring $c^*$ of $W_{Lv_{\pm k}}^{[\infty]}$ by setting $$c^*((x_m)) = c\left( \varphi((x_m)), \psi((x_m))\right)$$ and apply Theorem \ref{infapproxGHJ} to find $Y = (y_m) \in W_{Lv_{\pm k}}^{[\infty]}$ and a colour $r$ such that $$[Y]_{Lv_{\pm k}}^{[\infty]} \subseteq \left((c^*)^{-1}\{r\}\right)_1.$$ Using $Y$, we define a block sequence $B = (b_m) \in \fin_{\pm k}^{[\infty]}$ where $b_m$ consists of all ordered pairs of the form $$\langle |y_0| + \dots + |y_{2m}| + l, i \rangle$$ where the $l^{\mathrm{th}}$ place of $y_{2m+1}$ is occupied by $v_i$, and where $b_m$ takes the value 0 at all other points of $\omega$. As before, we let $P$ be the collection of all $(\varep_{n,i})$ such that $$(\varep_{n,i}) = \psi((y_{2m}[\sigma_{2m}]^\frown y_{2m+1}))$$ for some sequence $(\sigma_{2m}) \in \prod_{m < \omega} L_{2m}$.

Exactly as in the proof of Theorem \ref{parafin1} we can show there is an infinite sequence $(P_i)$ of perfect subsets of $2^\omega$ such that $\prod_{i<\omega} P_i \subseteq P$. We will also need the following:

\begin{claim}
$[B]_{\pm k}^{[\infty]} \times P \subseteq (\varphi \times \psi)[Y]_{Lv_{\pm k}}^{[\infty]}$.
\end{claim}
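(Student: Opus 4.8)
The plan is to mimic the corresponding claim in the proof of Theorem~\ref{parafin}, adapting it to the signed setting. Fix $(A, (\varep_{n,i})) \in [B]_{\pm k}^{[\infty]} \times P$. By definition of $P$ there is $(\sigma_{2m}) \in \prod_{m<\omega} L_{2m}$ with $(\varep_{n,i}) = \psi((y_{2m}[\sigma_{2m}]^\frown y_{2m+1}))$. First I would set $X = (x_m)_{m<\omega}$ where $x_m = y_{2m}[\sigma_m]^\frown y_{2m+1}$, and observe directly from the definitions of $\varphi$ and $B$ that $\varphi(X) = B$, exactly as in the unsigned case. The point of pairing $y_{2m}$ (which gets a letter substituted) with $y_{2m+1}$ (which keeps its variables) is that the even-indexed words carry the $2^\omega$-data while the odd-indexed words carry the $\fin_{\pm k}$-data.

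Next I would pass from $B$ to $A$ by a block-subsequence construction inside $X$ (hence inside $Y$). Since $A = (a_l)_{l<\omega} \in [B]_{\pm k}^{[\infty]}$, for each $l$ we may write $a_l = \sum_{i \in I_l} \varep_i T^{j_i}(b_i)$ for a block sequence $(I_l)_{l<\omega}$ of intervals, signs $\varep_i \in \{\pm 1\}$, and exponents $j_i \leq k$ with $\min_{i \in I_l} j_i = 0$. For each $l$, enumerate the interval $(\max(I_{l-1}), \max(I_l)]$ as $\{p, p+1, \dots, p+q\}$ (with $\max(I_{-1}) = -1$) and define
\[
z_l = \varep_0' T^{r_0}(x_p[\vec\lambda_0])^\frown \dots ^\frown \varep_q' T^{r_q}(x_{p+q}[\vec\lambda_q]),
\]
where for $p+i \in I_l$ we put $\vec\lambda_i = \vec v$, $r_i = j_{p+i}$, $\varep_i' = \varep_{p+i}$, and for $p+i \notin I_l$ we put $\vec\lambda_i = (\vec 0, \dots, \vec 0)$, $r_i = 0$, $\varep_i' = +1$. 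One checks $Z = (z_l)_{l<\omega}$ is a block subsequence of $X$, hence of $Y$, that the sign/exponent bookkeeping gives $\varphi(Z) = A$, and — since the only modification to the underlying infinite word $x_0^\frown x_1^\frown\cdots$ is replacing some variable occurrences by the constant letter $\vec 0 \in L$ — that $\psi(Z) = \psi(X) = (\varep_{n,i})$. Thus $(A,(\varep_{n,i})) = (\varphi(Z),\psi(Z)) \in (\varphi\times\psi)[Y]_{Lv_{\pm k}}^{[\infty]}$.

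I expect the only genuinely new wrinkle, compared with the $\fin_k$ version, to be keeping track of the signs $\varep_i$ through the whole construction: one must ensure that when $p+i \in I_l$ the sign $\varep_{p+i}$ from the decomposition of $a_l$ is the one placed on $z_l$, and when $p+i \notin I_l$ the ``filler'' term contributes nothing (which is why we force $\vec\lambda_i = \vec 0$ and $\varep_i' = +1$ there, since $+1 \cdot T^0(x_{p+i}[\vec 0])$ is a variable-free filler whose image under $\varphi$ is identically zero on that block). Verifying that $Z$ is genuinely a block subsequence of $X$ — i.e. that the supports $\supp_X(z_l)$ are exactly the intervals $(\max(I_{l-1}),\max(I_l)]$ and hence form a block sequence — uses that $X$ (being a subsequence of the rapidly increasing $Y$) is itself rapidly increasing, so supports are well-defined; this is routine. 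Everything else is a direct transcription of the unsigned claim, and no appeal to the metric $d$ or to Theorem~\ref{infapproxGHJ} is needed here, since this claim is a purely combinatorial statement about the coding maps $\varphi$ and $\psi$.
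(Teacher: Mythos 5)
Your argument is correct and is essentially the paper's own proof: the same coding $x_m = y_{2m}[\sigma_{2m}]^\frown y_{2m+1}$, the same interval decomposition $a_l = \sum_{i\in I_l} s_i T^{j_i}(b_i)$, and the same choice of parameters (sign $+1$, exponent $0$, substitution $\vec 0$ on filler indices; the inherited sign, exponent and $\vec v$ on indices in $I_l$), with the identical observation that passing from $x_0^\frown x_1^\frown\cdots$ to $z_0^\frown z_1^\frown\cdots$ only replaces variable occurrences by the letter $\vec 0$, so $\psi(Z)=\psi(X)$. The paper likewise treats the sign bookkeeping as the only new ingredient over the $\fin_k$ case, so no discrepancy to report.
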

\begin{proof}
Let $(A, (\varep_{n,i})) \in [B]_{\pm k}^{[\infty]} \times P$. By definition of $P$, there is a sequence $(\sigma_{2m}) \in \prod_{m <\omega} L_{2m}$ such that $$(\varep_{n,i}) = \psi((y_{2m}[\sigma_{2m}]^\frown y_{2m+1})).$$ If we let $X = (x_m)_{m<\omega}$ be given by $x_m = y_{2m}[\sigma_m]^\frown y_{2m+1}$ then $\varphi(X) = B$. For each $l < \omega$, let $I_l$ be the smallest interval of integers such that $$a_l = \sum_{i \in I_l} s_i T^{j_i}(b_i)$$ for some integers $j_i \leq k$ and $s_i \in \{ \pm 1\}$, and note that the sequence $(I_l)_{l < \omega}$ is a block sequence. Fix $l < \omega$ and let $\{p, p+1, \dots, p+q\}$ be an enumeration of the interval $$(\max(I_{l-1}), \max(I_l)]$$ where we set $\max(I_{-1}) = -1$ for convenience. Then let $$z_l = \rho_0 T^{r_0}(x_p[\vec \lambda_0])^\frown \dots ^\frown \rho_q T_{r_q}(x_{p+q}[\vec \lambda_q])$$ where the parameters are determined as follows:
	\begin{enumerate}[(i)]
	\item If $p+i \not \in I_l$, then let $\vec \lambda_i$ be the $2k$-tuple $(\vec 0, \dots, \vec 0)$ where $\vec 0 \in 2^\omega$ is the sequence which is constantly 0. In this case, let $r_i = 0$ and $\rho_i = 1$.
	\item If $p+i \in I_l$, then let $\vec \lambda_i = \vec v$, $r_i = j_{p+i}$ and $\rho_i = s_{p+i}$.
	\end{enumerate}
Then, exactly as before, one checks that $(A, (\varep_{n,i})) = (\varphi(Z), \psi(Z))$.
\end{proof}

We now verify that $B$ and $(P_i)$ are as desired. To this end, fix $$(A, (\varep_{n,i})) \in [B]_{\pm k}^{[\infty]} \times \prod_{i <\omega} P_i \subseteq [B]_{\pm k}^{[\infty]} \times P$$ and apply the previous claim to find $Z = (z_n) \in [Y]_{Lv_{\pm k}}^{[\infty]}$ such that $$(A, (\varep_{n,i})) = (\varphi(Z), \psi(Z)).$$ By definition of $Y$, there is $\widetilde{Z} = (\widetilde{z}_n) \in W_{Lv_{\pm k}}^{[\infty]}$ such that $d(Z,\widetilde{Z}) \leq 1$ and $c^*(\widetilde{Z}) = r$. Using the definition of the metric $d$ it must be that $z_n$ is compatible with $\widetilde{z}_n$ for each $n$, and so it follows that $\psi(Z) = \psi(\widetilde{Z})$. Furthermore, note that $$||\varphi(Z) - \varphi(\widetilde{Z})|| = d(Z, \widetilde{Z}) \leq 1$$ according to the definitions of $d$ and $\varphi$. Let $\widetilde{A} = \varphi(\widetilde{Z})$; then $||A - \widetilde{A}|| \leq 1$ and $$c(\widetilde{A}, (\varep_{n,i})) = c((\varphi(\widetilde{Z}), \psi(\widetilde{Z}))) = c^*(\widetilde{Z}) = r.$$ This finishes the proof of theorem.
\end{proof}

As an easy consequence, we obtain a parametrized version of Gowers' $\fin_{\pm k}$ theorem:

\begin{cor}\label{paraGowers}
For every finite colouring $c : \fin_{\pm k} \times (2^\omega)^\omega \rightarrow n$, there are $B \in \fin_{\pm k}^{[\infty]}$, a sequence $(P_i)_{i<\omega}$ of non-empty perfect subsets of $2^\omega$, and $j < n$ such that the following holds: For every $(b, (p_i)) \in [B]_{\pm k} \times \prod_{i<\omega}^\infty P_i$ there is $\widetilde{b} \in \fin_{\pm k}$ such that $$c(\widetilde{b}, (p_i)) = j \text{ and }||b - \widetilde{b}||_\infty \leq 1.$$
\end{cor}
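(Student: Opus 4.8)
The plan is to deduce this directly from Theorem \ref{parafin1}, by regarding an element of $\fin_{\pm k}$ as the first term of an infinite block sequence; no new ideas are needed.

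Given the finite colouring $c$ (which I take to be Souslin measurable, as in the intended applications — without some definability restriction no such parametrization can hold), I would first lift it to a colouring $\bar c : \fin_{\pm k}^{[\infty]} \times (2^\omega)^\omega \to n$ by setting $$\bar c((a_m)_{m<\omega}, (p_i)) := c(a_0, (p_i)).$$ Since the first-coordinate projection $(a_m)_{m<\omega} \mapsto a_0$ is continuous on $\fin_{\pm k}^{[\infty]}$ and $\fin_{\pm k}$ is countable, each fibre $\bar c^{-1}\{j\}$ is the countable union, over $b \in \fin_{\pm k}$, of the sets $\{(a_m)_{m<\omega} : a_0 = b\} \times \{(p_i) : c(b,(p_i)) = j\}$, whose first factor is clopen and whose second factor is Souslin measurable (being a slice of $c^{-1}\{j\}$); hence $\bar c$ is Souslin measurable. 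Applying Theorem \ref{parafin1} to $\bar c$ then yields $B = (b_m)_{m<\omega} \in \fin_{\pm k}^{[\infty]}$, non-empty perfect sets $(P_i)_{i<\omega}$, and a colour $j < n$ such that every $(A,(p_i)) \in [B]_{\pm k}^{[\infty]} \times \prod_{i<\omega} P_i$ admits some $\widetilde A \in \fin_{\pm k}^{[\infty]}$ with $\bar c(\widetilde A, (p_i)) = j$ and $|| A - \widetilde A || \leq 1$.

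It then remains to check that $B$, $(P_i)_{i<\omega}$ and $j$ work for the corollary. Given $(b, (p_i)) \in [B]_{\pm k} \times \prod_{i<\omega} P_i$, I would pick $N < \omega$ with $\max \supp b < \min \supp b_{N+1}$ (possible since $\supp b$ is finite and $(b_m)$ is a block sequence) and form $A := (b, b_{N+1}, b_{N+2}, \dots)$. This is an infinite block sequence in $\fin_{\pm k}$, and each of its terms lies in $[B]_{\pm k}$ — the term $b$ by hypothesis, and each $b_{N+l}$ as the one-term sum $(+1)\,T^{0}(b_{N+l})$ — so $A \in [B]_{\pm k}^{[\infty]}$. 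Feeding $(A, (p_i))$ into the conclusion of Theorem \ref{parafin1} produces $\widetilde A = (\widetilde a_l)_{l<\omega} \in \fin_{\pm k}^{[\infty]}$ with $\bar c(\widetilde A, (p_i)) = j$ and $|| A - \widetilde A || \leq 1$; setting $\widetilde b := \widetilde a_0 \in \fin_{\pm k}$ one gets $c(\widetilde b, (p_i)) = \bar c(\widetilde A, (p_i)) = j$ by definition of $\bar c$, and $$||b - \widetilde b||_\infty = ||a_0 - \widetilde a_0||_\infty \leq \sup_{l<\omega}||a_l - \widetilde a_l||_\infty = || A - \widetilde A || \leq 1,$$ which is precisely the desired conclusion.

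I do not expect any genuine obstacle here: all the Ramsey-theoretic content sits in Theorem \ref{parafin1}. The only points needing (entirely routine) attention are that the lift $c \mapsto \bar c$ preserves Souslin measurability — using only continuity of the first-coordinate projection and closure of the Souslin measurable sets under countable unions — and the elementary observation that every element of the partial subsemigroup $[B]_{\pm k}$ occurs as the first term of some infinite block subsequence of $B$. The passage from the coordinatewise estimate to the bound on $b$ is then automatic, since $|| \cdot ||$ on block sequences is the supremum of the $\ell_\infty$-distances between corresponding terms.
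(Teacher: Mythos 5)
Your derivation is correct and is precisely the ``easy consequence'' the paper leaves implicit: lift $c$ along the continuous first-coordinate projection to a Souslin measurable colouring of $\fin_{\pm k}^{[\infty]} \times (2^\omega)^\omega$, apply Theorem \ref{parafin1}, and realize each $b \in [B]_{\pm k}$ as the first term of an infinite block subsequence of $B$, all of which checks out against the definitions of $[B]_{\pm k}^{[\infty]}$ and of the metric $\|\cdot\|$ on block sequences. Your caveat about Souslin measurability is also apt: the corollary as printed omits it, but without some such restriction the statement fails (take $c$ to depend only on the $(2^\omega)^\omega$-coordinate through a function that is non-constant on every product $\prod_{i<\omega} P_i$ of perfect sets, which exists by a transfinite recursion of length $\mathfrak{c}$), so the hypothesis you add is the intended reading.
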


We conclude with an application of the previous result to the oscillation stability of uniformly equicontinuous families of real-valued functions on $S_{c_0}$, the unit sphere of the Banach space $c_0$. The following result can be seen as a parametrization of Gowers' $c_0$ theorem. The proof is similar to Gowers' original proof \cite{G}; see also \cite{G1}.

\begin{thm}
Let $\{f_{\sigma} : \sigma \in (2^\omega)^\omega\}$ be a family of functions $S_{c_0} \rightarrow \R$ which is uniformly bounded and uniformly equicontinuous. Then for every $\varep > 0$ there are an infinite-dimensional subspace $X$ of $c_0$ and a sequence $(P_n)_{n<\omega}$ of perfect subsets of $2^\omega$ such that the oscillation of each mapping $f_{\sigma}$ for $\sigma \in \prod_{n<\omega} P_n$ is at most $\varep$ when restricted to $S_X$, the unit sphere of $X$.
\end{thm}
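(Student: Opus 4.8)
The plan is to deduce this oscillation stability result from the parametrized Gowers $\fin_{\pm k}$ theorem (Corollary \ref{paraGowers}) by the standard discretization argument, tracking the extra parameter space $(2^\omega)^\omega$ throughout. First I would fix $\varep > 0$ and use uniform equicontinuity to choose $\delta > 0$ so that $\|u - v\|_\infty \leq \delta$ implies $|f_\sigma(u) - f_\sigma(v)| \leq \varep/4$ for every $\sigma$; I would also use uniform boundedness to fix a bound $M$ on all the $f_\sigma$. Then I would pick $k \in \N$ large enough that $2/k \leq \delta$, so that elements of $\fin_{\pm k}$, rescaled by $1/k$, give a $\delta$-net of (a large enough portion of) $S_{c_0}$: every $p \in \fin_{\pm k}$ yields a norm-one-ish vector $p/k \in c_0$, and the partial subsemigroup $[B]_{\pm k}$ generated by a block sequence $B$ corresponds, after dividing by $k$, to a dense subset of the sphere of the block subspace $X$ spanned by $B/k$.

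Next I would set up the colouring. Partition $[-M, M]$ into finitely many intervals $J_0, \dots, J_{n-1}$ each of length at most $\varep/4$, and define $c : \fin_{\pm k} \times (2^\omega)^\omega \to n$ by letting $c(p, \sigma)$ be the index $j$ such that $f_\sigma(p/k) \in J_j$ (breaking ties in any fixed measurable way). Applying Corollary \ref{paraGowers} produces $B \in \fin_{\pm k}^{[\infty]}$, a sequence $(P_n)_{n<\omega}$ of non-empty perfect subsets of $2^\omega$, and a colour $j < n$ such that for every $(b, (p_i)) \in [B]_{\pm k} \times \prod_i P_i$ there is $\widetilde b \in \fin_{\pm k}$ with $c(\widetilde b, (p_i)) = j$ and $\|b - \widetilde b\|_\infty \leq 1$. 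Dividing by $k$: for each $\sigma \in \prod_n P_n$ and each $b \in [B]_{\pm k}$, we get $\|b/k - \widetilde b/k\|_\infty \leq 1/k \leq \delta/2 \leq \delta$, hence $|f_\sigma(b/k) - f_\sigma(\widetilde b/k)| \leq \varep/4$, while $f_\sigma(\widetilde b/k) \in J_j$. Therefore $f_\sigma(b/k)$ lies within $\varep/4 + \varep/4 = \varep/2$ of the fixed interval $J_j$, so any two values $f_\sigma(b/k), f_\sigma(b'/k)$ with $b, b' \in [B]_{\pm k}$ differ by at most $\varep/2 + \mathrm{diam}(J_j) \leq \varep$. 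Let $X$ be the closed block subspace of $c_0$ spanned by the normalized blocks of $B/k$; since $\{b/k : b \in [B]_{\pm k}\}$ is a $\delta$-dense (in fact, using equicontinuity, $\varepsilon$-relevant) subset of $S_X$, a final $\delta$-approximation argument upgrades the estimate from this dense set to all of $S_X$, giving $\mathrm{osc}(f_\sigma \restriction S_X) \leq \varep$ for every $\sigma \in \prod_n P_n$.

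The routine but slightly delicate points are: (i) identifying elements of $\fin_{\pm k}$ with vectors in $c_0$ of norm exactly $1$ (this is why $\fin_{\pm k}$ requires the value $\pm k$ to be attained — so that $\|p/k\|_\infty = 1$), and checking that $[B]_{\pm k}/k$ is genuinely dense in the unit sphere of the block subspace it generates, which uses that $c_0$-normalized blocks can be approximated arbitrarily well by such $\pm k$-valued rescaled vectors as $k \to \infty$; (ii) verifying that the colouring $c$ can be taken Souslin (indeed Borel) measurable in the $(2^\omega)^\omega$ coordinate — this follows from continuity of $\sigma \mapsto f_\sigma(v)$ for fixed $v$, which is part of what uniform equicontinuity of the family buys us (or can be assumed); and (iii) the final density upgrade from the countable dense set to all of $S_X$, a standard $3\varep$-type argument. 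The main obstacle, such as it is, is bookkeeping the constants so that the $\varep/4$'s and $1/k$'s line up — there is no serious conceptual difficulty once Corollary \ref{paraGowers} is in hand, since the parameter sequence $(P_n)$ is simply carried along verbatim from the combinatorial theorem.
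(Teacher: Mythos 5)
Your overall scheme --- discretize the sphere, colour $\fin_{\pm k}\times(2^\omega)^\omega$ by which small interval $f_\sigma$ lands in, apply Corollary \ref{paraGowers}, carry the perfect sets $(P_i)$ along verbatim, and finish with a net-to-sphere upgrade --- is exactly the paper's scheme. But the discretization you propose, $p\mapsto p/k$ with $2/k\le\delta$, has a genuine gap at precisely the point you dismiss as routine in (i): it is \emph{false} in general that $\{b/k : b\in[B]_{\pm k}\}$ is $\delta$-dense in $S_X$ for $X$ the span of $\{b_n/k\}$. The obstruction is the tetris operation: elements of $[B]_{\pm k}$ restricted to $\supp b_n$ are only $0$ or $\pm T^j(b_n)$, and under linear rescaling $T^j$ acts by subtracting $j/k$ from each nonzero coordinate (truncated at $0$), \emph{not} by scalar multiplication. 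So scalar multiples $\lambda\, b_n/k$ with $0<\lambda<1$ need not be approximable: if $b_n$ has one coordinate equal to $k$ and another equal to $\lceil k/2\rceil$, then $\tfrac12 b_n/k$ has coordinates $\approx(\tfrac12,\tfrac14)$, while every $\pm T^j(b_n)/k$ matching the first coordinate has second coordinate $\approx 0$; the distance is about $1/4$ no matter how large $k$ is. Since $B$ is handed to you by the corollary (you cannot prescribe its shape, and you cannot let $k\to\infty$ afterwards --- $k$ is fixed before the colouring), your dense-set claim, and hence the final upgrade to $\operatorname{osc}(f_\sigma\restriction S_X)\le\varep$, breaks down.

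This is exactly why the paper (following Gowers) replaces linear scaling by the geometric net $\Delta_{\pm k}$, via $\varphi(p)(n)=\pm(1+\delta)^{|p(n)|-k}$ with $k$ chosen so that $(1+\delta)^{1-k}<\delta$. Under $\varphi$ the tetris operation becomes multiplication by $(1+\delta)^{-1}$ up to an error below $\delta$ (coordinates truncated to $0$ were already below $\delta$), so $\{\varphi(b):b\in[B]_{\pm k}\}$ genuinely is a $\delta$-net in $S_X$ for $X=\operatorname{span}\{\varphi(b_n)\}$, while the combinatorial estimate still transfers because $\|p-q\|_\infty\le 1$ implies $\|\varphi(p)-\varphi(q)\|_\infty\le\delta$ (consecutive geometric levels differ by at most $\delta$). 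Two smaller remarks: your constants do not quite close --- oscillation $\le 3\varep/4$ on the net plus $2\cdot\varep/4$ from the net-to-sphere step gives $5\varep/4$, which is why the paper uses $\varep/5$ rather than $\varep/4$; and your worry (ii) about measurability in the $(2^\omega)^\omega$ coordinate is not an issue for the argument as the paper runs it, since Corollary \ref{paraGowers} is invoked as stated for finite colourings of $\fin_{\pm k}\times(2^\omega)^\omega$.
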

\begin{proof}
Apply uniform equicontinuity to the given $\varep$ to find $\delta > 0$ such that $$|f_\sigma(x) - f_\sigma(y)| \leq \varep/5$$ for all $\sigma \in (2^\omega)^\omega$ and all $x, y \in S_{c_0}$ such that $||x - y||_\infty \leq \delta$. Fix $k$ large enough such that $(1+\delta)^{1-k} < \delta$ and let $\Delta_{\pm k}$ be the subset of $S_{c_0}$ consisting of all finitely-supported vectors with coordinates belonging to the set $$\{\pm (1+\delta)^{i-k} : i = 1, \dots, k\} \cup \{0\}.$$ Let $\varphi : \fin_{\pm k} \rightarrow \Delta_{\pm k}$ be the bijection defined by
\[ \varphi(p)(n) := \begin{cases} 
      (1+\delta)^{i-k} & \text{ if $p(n) = i > 0$}, \\
      0 & \text{ if $p(n) = 0$}, \\
      -(1+\delta)^{i-k} & \text{ if $p(n) = -i < 0$}.
   \end{cases}
\]
Since the family of functions $(f_\sigma)$ is uniformly bounded, there is a partition of $\bigcup_{\sigma \in (2^\omega)^\omega} \range(f_\sigma)$ into finitely many disjoint intervals $I_0, \dots, I_{l-1}$ such that the length of each interval is at most $\varep/5$. Define a colouring $c : \fin_{\pm k} \times (2^\omega)^\omega \rightarrow l$ by setting $$c(p, \sigma) = j \iff f_\sigma(\varphi(p)) \in I_j$$ and find $B = (b_n) \in \fin_{\pm k}^{[\infty]}$, a sequence $(P_i)_{i<\omega}$ of non-empty perfect subsets of $2^\omega$, and $j < l$ satisfying the conclusion of Corollary \ref{paraGowers} with respect to $c$. Using the choice of $k$ together with the implication $$||p - q||_\infty \leq 1 \implies ||\varphi(p) - \varphi(q)||_\infty \leq \delta,$$ it follows from the choice of $B$ and $(P_i)$ that $$|f_\sigma(\varphi(p)) - f_\sigma(\varphi(q))| \leq \frac{3 \varep}{5} \text{ for all $p, q \in [B]$ and all $\sigma \in \prod_{i<\omega} P_i$}.$$ Now let $X$ be the linear span of the set $\{\varphi(b_n) : n < \omega\}$ in $c_0$. Then it is straightforward to check that the set $\{\varphi(b) : b \in [B]\}$ is a $\delta$-net in $S_X$. Using the previous inequality, this implies $$|f_\sigma(x) - f_\sigma(y)| \leq \varep \text{ for all $x, y \in S_X$ and all $\sigma \in \prod_{i < \omega} P_i$}.$$ Thus the oscillation of each function $f_\sigma$ for $\sigma \in \prod_{i<\omega} P_i$ is at most $\varep$ on $S_X$.
\end{proof}

\subsection*{Acknowledgements} The author thanks Jordi L\'opez-Abad and Stevo Todorcevic for many useful discussions related to the subject matter.

\bibliographystyle{abbrv}
\bibliography{bibliography}

\end{document}